\documentclass{amsart}
\usepackage[T1]{fontenc}
\usepackage{amsmath}
\usepackage{amsfonts} 
\usepackage{amsthm}
\usepackage[mathscr]{euscript}
\usepackage{amssymb}
\usepackage{tikz-cd}
\usepackage{derivative}
\usepackage{hyperref}
\usepackage{dirtytalk}
\usepackage{array}
\usepackage{diagbox}

\usepackage[backend=biber,
style=numeric,giveninits=true]{biblatex} 
\theoremstyle{plain}
\newtheorem{thm}{Theorem}[section]
\newtheorem{lem}[thm]{Lemma}
\newtheorem{obs}[thm]{Observation}
\newtheorem{prop}[thm]{Proposition}
\newtheorem{cor}[thm]{Corollary}
\newtheorem{conj}[thm]{Conjecture}

\newtheorem{case}{Case}
\newtheorem*{claim*}{Claim}

\theoremstyle{definition}
\newtheorem{defi}[thm]{Definition}

\theoremstyle{remark}
\newtheorem{rem}{Remark}[thm]

\numberwithin{equation}{section}

\newcommand{\Rvar}[1]{\mathscr{#1}}
\newcommand{\var}[1]{{#1}}

\newcommand{\R}{\mathbb{R}}
\newcommand{\C}{\mathbb{C}}
\newcommand{\K}{\mathbb{K}}
\newcommand{\Hq}{\mathbb{H}}
\newcommand{\Z}{\mathbb{Z}}

\newcommand{\F}{\mathbb{F}}
\newcommand{\RR}{\mathcal{R}}

\newcommand{\sph}{\mathbb{S}}

\newcommand{\alg}{\mathrm{alg}}
\newcommand{\Gr}{\mathrm{Gr}}
\newcommand{\Th}{\mathrm{Th}}
\newcommand{\PT}{\mathrm{PT}}
\newcommand{\pt}{\mathrm{pt}}
\newcommand{\cofib}{\mathrm{cofib}}

\DeclareMathOperator{\codim}{codim}

\DeclareMathOperator{\inte}{int}
\DeclareMathOperator{\Ext}{Ext}
\DeclareMathOperator{\tr}{tr}
\DeclareMathOperator{\supp}{supp}
\DeclareMathOperator{\im}{im}

\title{Invariants of real affine varieties based on their complexifications}
\author{Juliusz Banecki}

\address{Faculty of Mathematics and Computer Science,
Jagiellonian University, ul. Lojasiewicza 6, 30-348 Krakow, Poland}
\email{juliusz.banecki@student.uj.edu.pl}
\subjclass[2020]{14P25, 19L47, 55N22}
\keywords{real algebraic set, algebraic approximation, Real-orientation}

\begin{document}
\begin{abstract}
We introduce a new family of invariants of real algebraic sets defined in terms of the topology of their complexifications and compute some of these invariants for spheres. This allows us to completely classify topological isomorphism classes of algebraic vector bundles over products of two spheres. We also obtain new results concerning both the existence and nonexistence of regular maps from products of spheres into spheres. Additionally, we show that the newly defined invariants provide obstructions to weak algebraic approximation of smooth submanifolds of real algebraic sets, disproving a conjecture of Kucharz and Kurdyka. 
\end{abstract}
\maketitle
\setcounter{tocdepth}{1}
\tableofcontents
\section{Introduction}
The paper is concerned with certain realisation problems in real algebraic geometry. A formal introduction, which sets up the language that we use is contained in Section \ref{sec:preliminaries}. In the current section we choose to avoid using specialised terminology. To keep things simple, our objects of interest are \emph{real algebraic sets}, i.e. subsets of $\R^n$ for some $n\geq 0$ given as zero sets of polynomials. A \emph{regular map} is a map between real algebraic sets, all of whose coordinates can be written as quotients of polynomials with nonvanishing denominators. 
\subsection{Motivation}
A typical problem in real algebraic geometry deals with finding algebraic models for various topological data. A prime example of a result of this sort is the classical \emph{Nash-Tognoli theorem}:
\begin{thm}[\cite{tognoliSuCongetturaDi1973}]
Let $M$ be a smooth closed connected manifold without boundary. Then, there exists a nonsingular real algebraic set $X$, which as a smooth manifold is diffeomorphic to $M$.
\end{thm}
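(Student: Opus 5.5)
Following Nash's and Tognoli's strategy, I would first realise $M$ as a component of a real algebraic set, and then use cobordism theory together with transversality to get rid of the other components.

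\textbf{Step 1 (Nash).} Embed $M$ smoothly in $\R^n$ by Whitney. Using the Grassmannian and Weierstrass approximation, show that $M$ is isotopic to a union of connected components of a nonsingular real algebraic set. Concretely, the Gauss map $M \to \Gr(k,n)$ can be approximated by a polynomial map, giving a Nash manifold. Its Zariski closure has $M$ (up to isotopy) as one of its components, but it may also have extra components or singular points.

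\textbf{Step 2 (cobordism).} By Thom's computation of unoriented cobordism, $\mathfrak{N}_*$ is generated by classes of nonsingular real algebraic sets; Milnor's generators are products of real projective spaces and hypersurfaces $H_{m,n}\subset \R P^m\times \R P^n$. Moreover, every class in $\mathfrak{N}_*(\Gr)$ has an algebraic representative. Since the normal data of $M\subset\R^n$ is classified by a map $f\colon M\to \Gr$, the bordism class $[M,f]$ is algebraically representable. Take such a representative $N$ with $N \sqcup M = \partial W$, together with the extension $F\colon W\to\Gr$.

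\textbf{Step 3 (transversality trick).} Approximate $F$ by a regular map and consider the pullback of the universal bundle. Then realise $M$ as the zero set of a section: $M\sqcup N'$ becomes the zero locus of a regular map $X\to \R^k$, which is transverse to $0$, where $X$ is an algebraic model of $W$ (or of its double) built from products of algebraic pieces. The components that are not $M$ are algebraic themselves, so they can be removed. This uses the following fact: if $Z=M\sqcup N$ with $N$ algebraic and $Z$ algebraic nonsingular, then a polynomial $p$ vanishing on $N$ and nonzero on $M$ lets one cut out $M$ as $\{x\in Z: t\,p(x)=1\}$ in $Z\times\R$. This set is isomorphic to $M$ via the projection, hence $M$ is diffeomorphic to a nonsingular algebraic set.

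\textbf{Main obstacle.} The main obstacle is Step 3: making the approximated regular section transverse while ensuring that the zero set near $M$ is still diffeomorphic to $M$ and that the extra components are genuinely algebraic rather than just semialgebraic. I would handle this by a relative Weierstrass approximation in the $C^\infty$ topology, using the stability of transverse zero sets under $C^1$-small perturbations.
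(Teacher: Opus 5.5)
The paper gives no proof of this statement; it is quoted from Tognoli (standard proofs are in \cite[Ch.~14]{bochnakRealAlgebraicGeometry1998} and \cite[Ch.~II]{akbulutTopologyRealAlgebraic1992}), so your outline has to stand on its own. Its overall shape is right. Your final separation trick is also correct: if $Z=M\sqcup N$ is nonsingular and $N$ is Zariski closed, then $M=Z\setminus N$ is Zariski open in $Z$, and the graph of $1/p$ over it is a nonsingular algebraic set isomorphic to $M$. The gap is in Step~3, which is the whole content of the theorem.

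As written, Step~3 is circular. To approximate $F$ and cut out $M\sqcup N'$ you need an algebraic domain, and you take for it an algebraic model of $W$ or of its double. But $W$ is an arbitrary bordism with $M$ in its boundary, not something assembled from products of algebraic pieces. Producing an algebraic model of its double, let alone one in which $N$ sits as an algebraic subset, is the statement you are proving, in dimension $\dim M+1$. Even on a genuinely algebraic domain, \emph{approximate $F$ by a regular map} is not a free step: regular maps into a Grassmannian pull back algebraic bundles, and not every topological bundle is algebraic. For instance, Theorem~\ref{thm:introKOKSp} with $\varphi(1,3)=\infty$ shows that no class with nonzero component in the $\widetilde{KO}^0(\sph^4)$-summand of $\widetilde{KO}^0(\sph^1\times\sph^3)$ is algebraic. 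The corresponding classifying maps $\sph^1\times\sph^3\to\Gr$ are therefore not homotopic to regular maps.

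There is a second problem. Suppose you did have a regular map whose transverse zero set is diffeomorphic to $M\sqcup N$. $C^1$-stability of transverse zero sets controls only the diffeomorphism type. After approximation, the piece near $N$ is a perturbed copy $N'$: a union of connected components of an algebraic set, but in general not Zariski closed (the oval of $y^2=x^3-x$ is not). So the polynomial $p$ your trick needs may not exist. This is exactly where Nash's theorem falls short of Tognoli's; note that your Step~1, which is Nash's theorem, is never used afterwards.

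The missing ingredient is a construction with two features:
\begin{itemize}
\item Every map to be approximated is defined on, and takes values in, spaces already known to be algebraic: Euclidean space, the Grassmannian and its universal bundle, $N$, and the total space $\{(y,v)\in N\times\R^n : v\in F(y)\}$ of the bundle pulled back by the regular map $F|_N$.
\item The approximation is relative to $N$: the approximating regular map must agree on $N$ with the given regular data, so that $N$ itself, unperturbed and Zariski closed, is a union of components of the zero set.
\end{itemize}
Only then can your graph trick remove it.
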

Here, \emph{nonsingular} means a certain algebraic criterion, which implies that $X$ is a smooth submanifold of $\R^n$. For the definition of this notion see \cite[Definition 3.3.4]{bochnakRealAlgebraicGeometry1998} or Section \ref{sec:preliminaries:real}.

A real algebraic set $X$ with properties as above is called \emph{a real algebraic model of $M$}. The Nash-Tognoli theorem has since been generalised in many ways, including versions allowing for simultaneous construction of models for $M$ and a finite number of its submanifolds \cite{akbulutRelativeNashTheorem1981}, or versions which allow the defining equations for $X$ to be taken with coefficients in $\mathbb Q$ \cite{ghiloniNashTognoliTheoremRationals2025}.

Some manifolds admit real algebraic models which are of particularly simple nature. For example, the sphere $\sph^n$ can be identified with the subset of $\R^{n+1}$ given by the single equation
\begin{equation*}
    \sum_i x_i^2=1.
\end{equation*}
Likewise, the Grassmannian $\mathrm{Gr}_{k}(\R^n)$ may be identified with the subspace of the space $\mathcal M_{n\times n}$ of $n\times n$ matrices defined by
\begin{equation*}
    \mathrm{Gr}_{k}(\R^n)=\{P\in \mathcal M_{n\times n}:P^\intercal =P,P^2=P,\tr P=k\}.
\end{equation*}
Identifying $\mathcal M_{n\times n}$ with $\R^{n^2}$ we obtain the standard real algebraic model of $\mathrm{Gr}_{k}(\R^n)$.

Given two real algebraic sets $X$ and $Y$, one may pose a different kind of question regarding algebraic realisation of topological data: which homotopy classes of continuous maps between $X$ and $Y$ are represented by regular maps? A typical result concerning this question is the following one:
\begin{thm}[{\cite[Corollary 2.7]{bochnakAlgebraicApproximationMappings1987}}]
For every triple $(m,n,k)$ of positive integers with $n\geq k$, every map from (the standard real algebraic model of) the sphere $\sph^m$ into (the standard real algebraic model of) the Grassmannian $\Gr_k(\R^n)$ is homotopic to a regular one.
\end{thm}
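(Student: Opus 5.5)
The plan is to reduce the statement to the construction of a single algebraic object over the sphere, namely an algebraic vector bundle whose classifying map represents the prescribed homotopy class. Regular maps $\sph^m\to\Gr_k(\R^n)$ correspond to algebraic subbundles of rank $k$ of the trivial bundle $\sph^m\times\R^n$: the map is $x\mapsto P(x)$, where $P(x)$ is the orthogonal projection matrix onto the fibre. Such a $P$ is a regular map whenever its entries are regular functions. Conversely, any continuous map $f\colon \sph^m\to\Gr_k(\R^n)$ is the classifying map of the pullback $E=f^*\gamma$ of the tautological bundle, with $E$ embedded in the trivial bundle $\sph^m\times\R^n$. So the task splits into two steps. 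First, realise the topological isomorphism class of $E$, together with a complement, by algebraic data. Second, make sure the resulting regular map lies in the same homotopy class as $f$, and not merely classifies an isomorphic bundle.

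For the first step I would use that homotopy classes $[\sph^m,\Gr_k(\R^n)]$ are controlled by clutching. Identifying $\sph^m$ with the suspension of $\sph^{m-1}$, a map is determined up to homotopy by a clutching function $\sph^{m-1}\to O(n)$ (or by a class in $\pi_m$ after fixing basepoints). The key input is that every homotopy class $\sph^{m-1}\to O(n)$, and more generally $\sph^{m}\to O(n)$, contains a regular map. This comes from the Clifford-algebra / Atiyah--Bott--Shapiro construction: Clifford module multiplications give explicit polynomial maps $\sph^{d}\to O(N)$, $x\mapsto \sum x_i e_i$, which generate the relevant homotopy groups in the stable range. Outside the stable range one uses that $\pi_m(\Gr_k(\R^n))$ is a quotient or subgroup accessible from $\pi_{m}(O(n))$ via the fibration $O(k)\times O(n-k)\to O(n)\to \Gr_k(\R^n)$.

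I would then write down $P(x)$ explicitly. Take $P_0$ to be a fixed projection, and set $P(x)=A(x)P_0A(x)^{-1}$, where $A\colon \sph^m\to O(n)$ is a regular map lifting the class, or is built by the algebraic suspension $A(x_0,x')=$ rational expression in $x_0$ and a regular $B(x')$ on $\sph^{m-1}$. The result is a regular map $\sph^m\to\Gr_k(\R^n)$. Sums of homotopy classes are realised by an algebraic pinch: compose with regular maps $\sph^m\to\sph^m\vee\sph^m$ replaced by regular maps of degree one that concentrate on hemispheres. The approximation statement, that regular maps into $\Gr$ are dense in the ones representable, is then used to connect everything. Alternatively, one can invoke the Weierstrass-type fact that a continuous map into a nonsingular target that is homotopic to a regular one can itself be approximated.

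The main obstacle is the unstable range. There, not every element of $\pi_m(\Gr_k(\R^n))$ comes from Clifford clutching, and since $\pi_m$ may also contain components not coming from $\pi_m(O(n))$, the group-theoretic generation argument needs care. I would first try to reduce to the stable case by passing to $\Gr_k(\R^{n+N})$ and then compressing back, using that the inclusion is injective on the relevant homotopy groups when $n\ge k$ is combined with the codimension count. Failing that, I would induct on $m$ via algebraic suspension, which is the classical route taken in the cited reference.
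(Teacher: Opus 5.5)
The paper only quotes this result from Bochnak--Kucharz, so the comparison is with the standard argument. Against that, your proposal has a genuine gap. Your first paragraph isolates the right two issues: realising the bundle, and landing in the homotopy class of $f$ itself with the given $n$. The homotopy-theoretic machinery you then propose resolves neither.

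(i) A class in $[\sph^m,\Gr_k(\R^n)]$ is not determined by a clutching function $\sph^{m-1}\to O(n)$. The fibration $O(k)\times O(n-k)\to O(n)\to\Gr_k(\R^n)$ only gives the exact sequence $\pi_m(O(n))\to\pi_m(\Gr_k(\R^n))\to\pi_{m-1}(O(k)\times O(n-k))\to\pi_{m-1}(O(n))$. So $\pi_m(\Gr_k(\R^n))$ is an extension, not a subgroup or quotient of $\pi_m(O(n))$.

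(ii) Maps $P=AP_0A^{-1}$ with $A\colon\sph^m\to O(n)$ reach only the image of $\pi_m(O(n))$. All of them classify a \emph{trivial} bundle, since $w\mapsto A(x)w$ trivialises the subbundle with fibres $A(x)\,\mathrm{im}\,P_0$. So every class with $f^*\gamma$ nontrivial, which is the essential case, is missed, and the ``algebraic suspension'' alternative is never specified. Moreover, the Clifford maps $x\mapsto\sum x_ie_i$ represent stable generators inside some $O(N)$ whose size you do not control. Hence the claim that every class $\sph^{m-1}\to O(n)$ is regular is unproved, and pushing these maps into $O(n)$ is the same compression problem you leave open at the end.

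(iii) There is no regular substitute for the pinch map. A degree-one self-map of $\sph^m$ is homotopic to the identity, so precomposing with it cannot add classes. Pointwise products would add classes of maps into the group $O(n)$, but $\Gr_k(\R^n)$ has no such structure.

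(iv) The ``Weierstrass-type fact'' presupposes a regular map in the class of $f$, which is exactly what has to be produced, so it is circular here. It is also false for general nonsingular targets, e.g.\ real curves of positive genus, which receive only constant regular maps from $\sph^1$.

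The missing idea is the bundle-theoretic approximation criterion, which makes this homotopy theory unnecessary. Let $\xi$ be an algebraic vector bundle over $\sph^m$ with a topological isomorphism $\phi\colon\xi\to f^*\gamma$. Composing $\phi$ with the inclusion $f^*\gamma\subset\sph^m\times\R^n$ gives a continuous, fibrewise injective section of the algebraic bundle $\mathrm{Hom}(\xi,\sph^m\times\R^n)$. Approximate it by a regular section $s$, using that regular sections of algebraic bundles are dense in continuous ones. Then $s$ is still fibrewise injective. The map sending $x$ to the orthogonal projection onto $s_x(\xi_x)$ is a rational expression in $s$ with nowhere vanishing denominator. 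It is therefore a regular map $\sph^m\to\Gr_k(\R^n)$ that is $C^0$-close, hence homotopic, to $f$ itself (cf.\ \cite[Theorem 13.3.1]{bochnakRealAlgebraicGeometry1998}). This uses the given $n$, so there is no compression step, and it never adds homotopy classes.

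What remains is that every topological vector bundle over $\sph^m$ is isomorphic to an algebraic one. Its stable class is algebraic by Swan's theorem (Theorem~\ref{thm:KCalgSphere}). This is where Clifford modules really belong: the Atiyah--Bott--Shapiro generators are images of regular idempotents $\tfrac12(I+\sum_i x_i\Gamma_i)$, with $\Gamma_0,\dots,\Gamma_m$ anticommuting symmetric involutions. Sums are harmless on this side because $\oplus$ preserves algebraicity. Finally, a bundle stably isomorphic to an algebraic one is itself algebraic. To see this, apply the same approximation to a fibrewise surjection onto a trivial summand and pass to its kernel (cf.\ \cite[Proposition 12.3.5]{bochnakRealAlgebraicGeometry1998}).
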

On the contrary, the question whether for all $n,k\geq 0$ every continuous map between $\sph^n$ and $\sph^k$ is homotopic to a regular one is a notoriously difficult open question.

The setting of real algebraic geometry is quite flexible allowing for results like the Nash-Tognoli theorem. On the other hand, there do exist many negative results concerning algebraic realisation of topological data as well. A concrete low-dimensional example is the following:
\begin{thm}[{\cite[Theorem 3.1]{bochnakRealizationHomotopyClasses1987}}]
Every regular map from $\sph^1\times \sph^1$ into $\sph^2$ is homotopic to the constant map.
\end{thm}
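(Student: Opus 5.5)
Since the result is cited from Bochnak--Kucharz, I would reconstruct their argument in the spirit of this paper, that is, through the complexifications. Let $f=(f_1,f_2,f_3)\colon \sph^1\times\sph^1\to\sph^2$ be regular. The homotopy class of $f$ is determined by its degree, so it suffices to show $\deg f=0$. The first step is a reduction to a rational torus. Via stereographic projection (or the rational parametrisation $t\mapsto\bigl(\tfrac{1-t^2}{1+t^2},\tfrac{2t}{1+t^2}\bigr)$), $\sph^1$ is birational to $\mathbb P^1$ with real points $\mathbb P^1(\R)\cong\sph^1$. Hence $f$ is the restriction to real points of a rational map $F\colon\mathbb P^1_\C\times\mathbb P^1_\C\dashrightarrow Q$. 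Here $Q=\{x_0^2=x_1^2+x_2^2+x_3^2\}\subset\mathbb P^3_\C$ is the complexification of $\sph^2$, and $Q\cong\mathbb P^1\times\mathbb P^1$. The map $F$ is defined over $\R$ and is regular along the real locus.

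The second step computes the degree through the fundamental class of $\sph^2$. The degree of $f$ equals $\langle f^*\omega,[\sph^1\times\sph^1]\rangle$, where $\omega$ generates $H^2(\sph^2;\Z)$. Equivalently, it is the intersection number of $f_*[T]$ with a point. The key observation is that $[\sph^2]$, the real locus of $Q$, is the class of a real algebraic cycle. In $H_2(Q;\Z)\cong\Z^2$, with the two rulings $\ell_1,\ell_2$ interchanged by complex conjugation on $Q$, the real sphere $\sph^2\subset Q$ represents $\pm(\ell_1-\ell_2)$. This class is anti-invariant under conjugation, whereas algebraic cycles defined over $\R$ give invariant classes.

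The third step is the main obstacle: running the invariance argument. After resolving the indeterminacy of $F$ away from the real locus by blowing up, I get a real morphism $\tilde F\colon \tilde T\to Q$, with $\tilde T$ a blow-up of $\mathbb P^1\times\mathbb P^1$ at conjugation-stable points off $T=\sph^1\times\sph^1$. The degree of $f$ equals the intersection pairing $\tilde F^*(\ell_1-\ell_2)\cdot[T]$ computed in $H_2(\tilde T)$. Now $[T]$ in $H_2(\mathbb P^1\times\mathbb P^1)$ is $\pm(h_1-h_2)$ for the same reason, and blow-ups off $T$ do not change this. The class $\tilde F^*(\ell_1-\ell_2)$ is a combination of $h_1,h_2$ and exceptional curves $e_i$. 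I would show that its $h$-part is a multiple of $h_1+h_2$ up to the ruling-swap issue. More precisely, conjugation acts on $H^2(\tilde T)$ and $\tilde F$ is equivariant, so $\tilde F^*(\ell_1-\ell_2)$ is anti-invariant. Pairing it with the anti-invariant class $h_1-h_2$ is not automatically $0$, so the real input must come from the orientation behaviour. On $\mathbb P^1\times\mathbb P^1$ with the product real structure, conjugation acts on each factor separately and fixes $h_1,h_2$. On $Q$, by contrast, it swaps $\ell_1,\ell_2$. An equivariant $\tilde F^*$ therefore sends the anti-invariant $\ell_1-\ell_2$ into the anti-invariant part of $H^2(\tilde T)$, which is zero for the product real structure: $h_1$, $h_2$ and the $e_i$ occur in conjugate pairs or are fixed, and conjugation acts on $H^2$ by $-1$ composed with the permutation. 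I expect this sign bookkeeping to be the crux, namely tracking that conjugation reverses complex orientation, so that the correct statement is $\sigma^*h=-h$ for real curves. Once it is done, the pairing vanishes, giving $\deg f=0$.
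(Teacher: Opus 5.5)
Your strategy is the right one: resolve $F$ off the real locus, read $\deg f$ off classes pulled back from $Q$, and exploit conjugation. But the decisive third step is left open, and the two facts you put in its place are false.

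First, $[T]\neq\pm(h_1-h_2)$. Conjugation reverses the complex orientation of every fibre of $\mathbb P^1\times\mathbb P^1$. Unlike on $Q$, it does not exchange the two rulings, so $\sigma_*=-1$ on $H_2(\mathbb P^1\times\mathbb P^1;\Z)$. On the other hand $\sigma_*[T]=[T]$, because $\sigma$ fixes $T$ pointwise. Hence $[T]=0$. This is also clear from $[T]=[\R P^1]\times[\R P^1]$ with $H_1(\C P^1)=0$, or from the fact that a fibre over a non-real point misses $T$. Run honestly, your ``same reason'' separates $T$ from $\sph^2$ instead of matching them.

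Second, the anti-invariant part of $H^2(\tilde T)$ is not zero under either convention. For the true action $\sigma^*h_j=-h_j$, so $h_1,h_2$ are themselves anti-invariant. For the naive permutation action, the differences $e_k-e_{\bar k}$ of conjugate exceptional classes are anti-invariant. Moreover, with true signs $\sigma^*\ell_1=-\ell_2$, so $\ell_1-\ell_2$ is invariant and the parity you assign to $\tilde F^*(\ell_1-\ell_2)$ is reversed.

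There are two smaller slips. Your Step 2 sentence calling $[\sph^2]$ the class of a real algebraic cycle contradicts the next one; real divisor classes on $Q$ are multiples of $\ell_1+\ell_2$. Also, $\tilde F^*(\ell_1-\ell_2)\cdot[T]=\pm2\deg f$, not $\deg f$.

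The missing idea is simply $[T]=0$, which makes the bookkeeping unnecessary.
\begin{itemize}
\item Every line $L$ of the first ruling is non-real and meets $\sph^2$ transversally in exactly one point, the real point $L\cap\sigma(L)$. So the Poincar\'e dual of $\ell_1$ restricts to a generator of $H^2(\sph^2;\Z)$, and $\deg f=\pm\langle\tilde F^*\ell_1,[T]\rangle$.
\item In $\tilde T$, the total transforms of $h_1,h_2$ and the exceptional classes form a basis of $H_2(\tilde T;\Z)$. They are represented by cycles disjoint from $T$: fibres over general non-real points, and curves lying over non-real points.
\item By unimodularity of the intersection form, $[T]=0$ in $H_2(\tilde T;\Z)$, and therefore $\deg f=0$.
\end{itemize}
If you prefer your invariance route, it works with true signs: $\tilde F^*(\ell_1-\ell_2)$ lies in the $\sigma^*$-invariant part of $H^2(\tilde T)$. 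That part is spanned by the $e_k-e_{\bar k}$, which vanish on $T$, giving $2\deg f=0$.

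The paper only cites this theorem, but in its language the argument reads as follows. $H^2_\C(\sph^2;\Z)=H^2(\sph^2;\Z)$, while $H^2_\C(\sph^1\times\sph^1;\Z)=0$. By Corollary \ref{cor:inclusion} the latter can be computed on $\mathbb P^1\times\mathbb P^1$ itself, so no resolution of $F$ is needed. Corollary \ref{cor:inclusion} then forces $\deg f=0$. This is the singular-cohomology counterpart of the vanishing of $KU^0_\C(\sph^n\times\sph^m,\sph^n\vee\sph^m)$ for odd $n,m$ in Theorem \ref{thm:KUCalgSpheres}.
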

To obtain such negative results, it is typical to consider certain invariants of real algebraic sets, which can obstruct the existence of a regular map in a given homotopy class or the existence of an algebraic model of a manifold satisfying certain additional conditions. Many different kinds of obstructions have been introduced for these purposes. As an example, there are the invariants
\begin{equation*}
    H_\ast ^\alg(X;\Z/2),H^\ast_\alg(X;\Z/2),H^{2\ast}_{\C-\alg}(X;\Z),H^{\ast}_\C(X;\Z)
\end{equation*}
(see \cite[Chapter II.7]{akbulutTopologyRealAlgebraic1992} and \cite{buchnerAlgebraicVectorBundles1987,ozanHomologyRealAlgebraic2001}) based on singular homology and cohomology. They have seen much use in real algebraic geometry as they are defined in elementary ways and are usually computable without much effort. On the other hand, there are some interesting problems where these invariants are too weak to be applied. For example, they have no chance of obstructing the existence of regular maps between spheres of different dimensions, as such maps induce the trivial morphism in singular homology.

A somewhat stronger family of invariants is based on algebraic K-theory. To introduce them, let us fix some notation. For a real algebraic set $X$ and an $\R$-algebra $\K$ equal to $\R$, $\C$ or $\Hq$ denote by $\RR(X,\K)$ the ring of regular functions from $X$ to $\K$ with pointwise addition and multiplication, where $\C$ and $\Hq$ are treated as real algebraic sets via the identifications $\C\cong \R^2$ and $\Hq\cong \R^4$. One can then consider the algebraic K-groups
\begin{equation*}
    K_0(\RR(X,\R)),K_0(\RR(X,\C)),K_0(\RR(X,\Hq)).
\end{equation*}
These invariants are generally stronger than the ones based on singular homology, but they are much more difficult to compute. For example, from the 1980s up until now it has been an open problem to compute the group $K_0(\RR(\sph^2\times \sph^2,\Hq))$.

The main goal of the current paper is to introduce a new broad family of invariants of real algebraic sets, which can be used as obstructions in various problems connected with realisation of homotopy classes by regular maps and with finding algebraic models of smooth manifolds. Their precise definitions are provided in Section \ref{sec:invariants}; in the following subsection we give only a very brief sketch explaining the idea lying behind their construction. 

\subsection{Sketch of construction of the new invariants}
First, we would like to recall the definition of the invariant $H^\ast_\C(X;\Z)$ introduced by Ozan \cite{ozanHomologyRealAlgebraic2001}, and then generalised by Dołęga \cite{dolegaComplexificationCohomologyReal2005}. Informally, the invariant is constructed as follows
\begin{defi}\label{defi:introHC}
Let $X$ be a nonsingular compact real algebraic set. Using well-known theory one can find a smooth complex projective variety $X_\C$ defined over the real numbers, whose real locus is Zariski dense in $X_\C$ and isomorphic to $X$ (see \cite[Theorem 2.3.7]{mangolteRealAlgebraicVarieties2020}). Then, for $d\geq 0$ the group $H^d_\C(X;\Z)$ is defined as the pullback of $H^d(X_\C;\Z)$ through the inclusion $i:X\hookrightarrow X_\C$:
\begin{equation*}
    H^d_\C(X;\Z):=\im(H^d(X_\C;\Z)\xrightarrow{i^\ast} H^d(X;\Z)).
\end{equation*}
\end{defi}
It is a nontrivial fact, but this definition does not depend on the particular choice of the variety $X_\C$. An essential ingredient of the proof of this fact is that the complex variety $X_\C$ is orientable.

The main idea behind our new family of invariants is to extend Definition \ref{defi:introHC} to a more general setting, where instead of singular cohomology one considers some stronger generalised cohomology theory $E^\ast$. For such an invariant to be properly defined, we still need an orientability condition, which turns out to hold for every \emph{complex-oriented} cohomology theory (see \cite{adamsStableHomotopyGeneralised1974} or Section \ref{sec:preliminaries:cohomology} for the definition). For every such generalised cohomology theory $E^\ast$ we define a corresponding invariant $E^\ast_\C$ of real algebraic sets. As an example, this includes a new invariant $KU^\ast_\C(X)$, which is approximately as strong as $K_0(\RR(X,\C))$, but which is better suited for computations. We do not have to stop at K-theory, however, as this way we can define invariants based on complex cobordism or on other intermediate complex-oriented cohomology theories.

We take this idea further to define even stronger invariants. The idea behind this strengthening is to endow the complex variety $X_\C$ with the $\Z/2$-action induced by the conjugation. In $\Z/2$-equivariant algebraic topology, there is the corresponding notion of a \emph{Real-oriented} cohomology theory, which fits our framework perfectly. Similarly to the complex-oriented case, this allows us to define the invariant $K\R_\C^\star(X)$ based on Atiyah's Real K-theory \cite{atiyahTheoryReality1966}, which can be thought of as a more approachable approximation to the algebraic K-groups $K_0(\RR(X,\R))$ and $K_0(\RR(X,\Hq))$. We can still go further than that and define invariants based on stronger Real-oriented cohomology theories, including the Real cobordism of Landweber \cite{landweberConjugationsComplexManifolds1968a}. Such invariants seem to be exceptionally strong, though we are not yet able to perform any meaningful computations with theories stronger than K-theory due to their complexity.

\subsection{Results concerning products of spheres}\label{sec:introduction:products}
After we define the invariants in Section \ref{sec:invariants}, we focus mostly on computations involving the functors $KU_\C^\ast$ and $K\R_\C^\star$. We are able to fully compute the groups $KU^\ast_\C(\sph^n)$, $K\R^\star_\C(\sph^n)$ for all $n$. From this, we deduce new interesting theorems concerning products of spheres, which we describe below.

First of all, for the first time we are able to completely describe the algebraic K-groups
\begin{equation*}
    K_0(\RR(\sph^n\times \sph^m,\K)),
\end{equation*}
where $n,m\geq 1$ and $\K=\R,\C$ or $\Hq$. For our convenience, we formulate our results in terms of reduced K-theory. To explain them clearly, we recall that there are natural comparison maps from the algebraic K-theory of a real algebraic set to its topological K-groups:
\begin{align*}
    \widetilde K_0(\RR(\sph^n\times \sph^m,\R))&\rightarrow \widetilde{KO}^0(\sph^n\times \sph^m),\\
    \widetilde K_0(\RR(\sph^n\times \sph^m,\C))&\rightarrow \widetilde{KU}^0(\sph^n\times \sph^m),\\
    \widetilde K_0(\RR(\sph^n\times \sph^m,\Hq))&\rightarrow \widetilde{KSp}^0(\sph^n\times \sph^m).
\end{align*}
These maps are monomorphisms \cite[Theorem 12.3.6]{bochnakRealAlgebraicGeometry1998}. This way we may treat the algebraic K-groups as subgroups of the corresponding topological K-groups. These subgroups classify topological isomorphism classes of algebraic vector bundles over $\sph^n\times \sph^m$; a topological vector bundle over the product of spheres is isomorphic to an algebraic one if and only if the K-theory class it represents lies in the image of the respective comparison map \cite[Proposition 12.3.5]{bochnakRealAlgebraicGeometry1998}.

To describe these subgroups, we first note the following consequence of topological K-theory satisfying Eilenberg–Steenrod axioms for generalised cohomology theory (cf. Section \ref{sec:invariantsProduct}):
\begin{obs}
There are splittings
\begin{align*}
    \widetilde{KO}^0(\sph^n\times \sph^m)&\cong \widetilde{KO}^0(\sph^{n+m})\oplus \widetilde{KO}^0(\sph^n)\oplus \widetilde{KO}^0(\sph^m),\\
    \widetilde{KU}^0(\sph^n\times \sph^m)&\cong \widetilde{KU}^0(\sph^{n+m})\oplus \widetilde{KU}^0(\sph^n)\oplus \widetilde{KU}^0(\sph^m),\\
    \widetilde{KSp}^0(\sph^n\times \sph^m)&\cong \widetilde{KSp}^0(\sph^{n+m})\oplus \widetilde{KSp}^0(\sph^n)\oplus \widetilde{KSp}^0(\sph^m).
\end{align*}
\end{obs}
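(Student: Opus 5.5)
The plan is to prove a general splitting $\widetilde E^0(X\times Y)\cong \widetilde E^0(X\wedge Y)\oplus \widetilde E^0(X)\oplus \widetilde E^0(Y)$ for pointed CW complexes $X,Y$ and any reduced cohomology theory $\widetilde E^\ast$. The three displayed splittings then follow by taking $E=KO,KU,KSp$, $X=\sph^n$, $Y=\sph^m$, and using $\sph^n\wedge\sph^m\cong\sph^{n+m}$. Only the Eilenberg–Steenrod axioms (exactness, homotopy invariance, suspension) are used, so the argument works for all three theories at once.

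First, I consider the cofibre sequence $X\vee Y\hookrightarrow X\times Y\to X\wedge Y$, where $X\wedge Y=(X\times Y)/(X\vee Y)$. The pair $(X\times Y,X\vee Y)$ is a CW pair, so this is a cofibration. It gives a long exact sequence
\begin{equation*}
\cdots\to\widetilde E^{-1}(X\vee Y)\to \widetilde E^0(X\wedge Y)\to\widetilde E^0(X\times Y)\to\widetilde E^0(X\vee Y)\to \widetilde E^1(X\wedge Y)\to\cdots
\end{equation*}
By the wedge axiom, which for finitely many summands follows from exactness and excision, $\widetilde E^k(X\vee Y)\cong\widetilde E^k(X)\oplus\widetilde E^k(Y)$ for all $k$.

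The key step is to show that the restriction $\widetilde E^k(X\times Y)\to\widetilde E^k(X\vee Y)$ is split surjective in every degree. I build the splitting from the projections $p_X\colon X\times Y\to X$ and $p_Y\colon X\times Y\to Y$ by setting $s(a,b)=p_X^\ast a+p_Y^\ast b$. Composing $s$ with restriction to $X\vee Y$ and then to each wedge summand gives the identity: $p_X$ restricted to $X$ is the identity, while $p_Y$ restricted to $X$ is the constant map to the basepoint, which induces zero in reduced cohomology, and symmetrically for $Y$. Since restriction is surjective in all degrees, the connecting maps vanish, so the long exact sequence breaks into split short exact sequences. In degree $0$ this gives
\begin{equation*}
\widetilde E^0(X\times Y)\cong\widetilde E^0(X\wedge Y)\oplus\widetilde E^0(X)\oplus\widetilde E^0(Y).
\end{equation*}

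No step here is a real obstacle. The only point needing care is the vanishing of the connecting homomorphism $\widetilde E^{-1}(X\vee Y)\to\widetilde E^0(X\wedge Y)$. This requires surjectivity of restriction in degree $-1$, not just in degree $0$, and that is exactly why I build the splitting $s$ in every degree rather than only in degree $0$.
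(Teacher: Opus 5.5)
Your proposal is correct and follows essentially the same route as the paper: you take the long exact sequence of $(X\times Y, X\vee Y)$, split it by the section $p_X^\ast+p_Y^\ast$ obtained from the projections and the wedge axiom, and then identify the relative term with $\widetilde E^0(\sph^{n+m})$ via $\sph^n\wedge\sph^m\cong\sph^{n+m}$. You also note explicitly that surjectivity is needed in degree $-1$, so that the connecting map into degree $0$ vanishes; the paper leaves this detail implicit.
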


For unitary K-theory, the following description of $\widetilde K_0(\RR(\sph^n\times \sph^m,\C))$ follows from the work of Bochnak and Kucharz \cite{bochnakRealizationHomotopyClasses1987}, although to our best knowledge it has never been stated in such an explicit form:
\begin{thm}\label{thm:introKU}
Let $n$ and $m$ be positive integers. Let $G\subset \widetilde{KU}^0(\sph^{n+m})$ be the zero group if both $n$ and $m$ are odd, and otherwise be the entire group $\widetilde{KU}^0(\sph^{n+m})$. Then, the image of $\widetilde K_0(\RR(\sph^n\times \sph^m,\C))$ in $\widetilde{KU}^0(\sph^n\times \sph^m)$ through the comparison map is the group
\begin{equation*}
    G\oplus \widetilde{KU}^0(\sph^n)\oplus \widetilde{KU}^0(\sph^m).
\end{equation*}
\end{thm}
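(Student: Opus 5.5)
We prove the two inclusions separately, after recording the known facts. First, $\widetilde{KU}^0(\sph^k)$ is $\Z$ for $k$ even and $0$ for $k$ odd. So if $n+m$ is odd the summand $\widetilde{KU}^0(\sph^{n+m})$ vanishes and $G$ is the whole (zero) group. The only case where $G$ is a proper subgroup is $n,m$ both odd. In the remaining case $n,m$ both even, $G$ is all of $\widetilde{KU}^0(\sph^{n+m})\cong\Z$.

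\emph{Lower inclusion (algebraicity of the claimed classes).} We show the side summands are algebraic. Consider the projections $p_1\colon\sph^n\times\sph^m\to\sph^n$ and $p_2$, which are regular maps. On a single sphere the comparison map $\widetilde K_0(\RR(\sph^k,\C))\to\widetilde{KU}^0(\sph^k)$ is an isomorphism; this is the classical result of Swan and Bochnak--Kucharz that every complex bundle on $\sph^k$ is topologically algebraic, e.g.\ via Clifford-module bundles. Pulling back along $p_1$ and $p_2$, which are regular and commute with the comparison map, realises the summands $\widetilde{KU}^0(\sph^n)\oplus\widetilde{KU}^0(\sph^m)$ inside the splitting of the Observation. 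When $n,m$ are both even, the top summand is generated by the external product $\beta_n\boxtimes\beta_m=p_1^\ast\beta_n\cdot p_2^\ast\beta_m$ of generators. Since the image of algebraic K-theory is a subring (tensor products of algebraic bundles are algebraic), this class is algebraic as well. This gives $G\oplus\widetilde{KU}^0(\sph^n)\oplus\widetilde{KU}^0(\sph^m)\subseteq$ image in all cases.

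\emph{Upper inclusion for $n,m$ odd.} Here we must show that no algebraic complex bundle has nonzero component in $\widetilde{KU}^0(\sph^{n+m})\cong\Z$. The detecting invariant is the top Chern character component $\mathrm{ch}_{(n+m)/2}$, equivalently a multiple of the Chern class $c_{(n+m)/2}$, evaluated on the fundamental class. Following Bochnak--Kucharz, the Chern classes of an algebraic $\C$-bundle on a compact nonsingular $X$ lie in $H^{2\ast}_{\C-\alg}(X;\Z)$, which is contained in $H^{2\ast}_\C(X;\Z)$ in Ozan's sense (Definition \ref{defi:introHC}). Therefore it suffices to show that the top class of $H^{n+m}(\sph^n\times\sph^m;\Z)$ is not in $H^{n+m}_\C$. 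Pass to the complexification $X_\C=Q_n\times Q_m$, where $Q_k$ is the smooth complex affine quadric, or its projective closure. For $n,m$ odd the point class of the real locus is the cross product of the fundamental classes of $\sph^n$ and $\sph^m$. But $H^{n}(Q_{n,\C})$ restricts trivially to $\sph^n$ when $n$ is odd: the affine quadric retracts onto $\sph^n$ via its tangent-bundle structure, but the projective closure has no odd cohomology. Then by the Künneth formula nothing in $H^{n+m}(X_\C)$ restricts to the fundamental class.

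\textbf{The main obstacle} is this last step. We must choose a smooth projective complexification, show its pullback misses the top class, and also handle torsion/rational issues: we need to detect the $\Z$ summand integrally, not just rationally. Rationally, odd-degree cohomology of a smooth projective quadric vanishes, so the top component is zero rationally, and since $\widetilde{KU}^0(\sph^{n+m})$ is torsion-free, the Chern character argument suffices. I would try first the projective quadric $\{x_0^2=\sum x_i^2\}\subset\mathbb{CP}^{k+1}$, which has no odd cohomology, and then conclude by Künneth.
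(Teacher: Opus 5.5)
Your proposal is correct. The lower inclusion (Swan's theorem on each sphere, pullback along the regular projections, and the external product of Swan generators when $n,m$ are both even) is what the paper does via Proposition \ref{prop:splittings}, Theorem \ref{thm:KCalgSphere} and Observation \ref{obs:productsExist}.

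For the upper inclusion when $n,m$ are both odd, you take a genuinely different route. The paper stays inside K-theory. It uses three ingredients:
\begin{enumerate}
\item the inclusion $KU^0_{\C-\alg}\subset KU^0_\C$ (Proposition \ref{prop:KCalgInKC});
\item the fact that the relative part of $KU^0_\C(\sph^n\times\sph^m)$ maps into $\widetilde{KU}^{-m}_\C(\sph^n)$ (Proposition \ref{prop:suspensionFC});
\item the vanishing $\widetilde{KU}^{\mathrm{odd}}_\C(\sph^n)=0$ for $n$ odd (Theorem \ref{thm:KUCSphere}), proved by pulling back along the regular map $\psi_n\colon\R P^n\to\sph^n$ and noting that $KU^\ast_\C(\R P^n)$ is generated by even-degree classes coming from $\C P^n$.
\end{enumerate}

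You instead pass to singular cohomology, and your argument is sound. Classes in $K_0(\RR(X,\C))$ come from $K_0$ of a projective complexification, by the same surjectivity as in Proposition \ref{prop:KCalgInKC}, and $H^\ast_\C$ is a subring, so the Chern classes land in $H^{2\ast}_\C$. K\"unneth on a product of projective quadrics gives $H^{n+m}_\C(\sph^n\times\sph^m;\Z)=0$. Note that this vanishing is what you actually need, not merely that the generator is missing. Finally, $\widetilde{KU}^0(\sph^n\times\sph^m)\cong\Z$ is torsion-free and $H^{2j}=0$ for $0<2j<n+m$. Hence $\mathrm{ch}_{(n+m)/2}=\pm c_{(n+m)/2}/((n+m)/2-1)!$ detects the class.

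This is essentially the classical Bochnak--Kucharz argument the paper alludes to. It is more elementary, since it needs no Gysin maps or equivariant Poincar\'e duality. However, it works only because the group to be killed is torsion-free and seen by a single Chern class. The paper's K-theoretic route is the one that carries over, via $K\R_\C$, to Theorem \ref{thm:introKOKSp}. There the relevant classes are often $2$-torsion (multiples of $\eta$ or $\eta^2$) and invisible to Chern classes.

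One slip to fix: the affine quadric cannot be used at all. It deformation retracts onto $\sph^n$, so $H^n(Q_n)\to H^n(\sph^n)$ is an isomorphism, not zero. You must use the projective quadric $\{x_0^2=\sum_i x_i^2\}\subset\C P^{n+1}$, whose real locus is exactly $\sph^n$. Its integral cohomology is free and concentrated in even degrees, by a cell decomposition. So the K\"unneth step is already integral, and your closing worry about torsion or rational issues is unnecessary.
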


In the remaining cases of orthogonal and quaternionic K-theory, the description turns out to be more complicated than that. To be able to state it in a concise way, we introduce an auxiliary function $\varphi:\Z\times \Z\rightarrow\{1,2,\infty\}$. We define it in such a way that $\varphi(n,m)$ depends only on the congruence of $n$ and $m$ modulo $8$ according to the following table:
\begin{center}
\begin{tabular}{|l|l|l|l|l|l|l|l|l|} 
\hline
\diagbox{n}{m} & 0 & 1     & 2 & 3     & 4 & 5     & 6 & 7      \\ 
\hline
0           & 1    & 1        & 1    & 1        & 1    & 1        & 1    & 1         \\ 
\hline
1           & 1    & 1        & 1    & $\infty$ & 1    & 1        & 1    & $\infty$  \\ 
\hline
2           & 1    & 1        & 1    & 1        & 1    & 1        & 2    & 2         \\ 
\hline
3           & 1    & $\infty$ & 1    & 1        & 1    & $\infty$ & 2    & 2         \\ 
\hline
4           & 1    & 1        & 1    & 1        & 1    & 1        & 1    & 1         \\ 
\hline
5           & 1    & 1        & 1    & $\infty$ & 1    & 1        & 1    & $\infty$  \\ 
\hline
6           & 1    & 1        & 2    & 2        & 1    & 1        & 1    & 1         \\ 
\hline
7           & 1    & $\infty$ & 2    & 2        & 1    & $\infty$ & 1    & 1         \\
\hline
\end{tabular}
\end{center}
For orthogonal and quaternionic K-theory our result can be described as follows:
\begin{thm}\label{thm:introKOKSp}
Let $n$ and $m$ be positive integers. Let $G\subset \widetilde{KO}^0(\sph^{n+m})$ be the subgroup of index $\varphi(n,m)$ (it is unique, as $\widetilde{KO}^0(\sph^{n+m})$ is cyclic according to the Bott periodicity theorem). Then, the image of $\widetilde K_0(\RR(\sph^n\times \sph^m,\R))$ in $\widetilde{KO}^0(\sph^n\times \sph^m)$ is the group
\begin{equation*}
    G\oplus \widetilde{KO}^0(\sph^n)\oplus \widetilde{KO}^0(\sph^m).
\end{equation*}

Similarly, let $G'\subset \widetilde{KSp}^0(\sph^{n+m})$ be the subgroup of index $\varphi(n,m+4)$. Then, the image of $\widetilde K_0(\RR(\sph^n\times \sph^m,\Hq))$ in $\widetilde{KSp}^0(\sph^n\times \sph^m)$ is the group
\begin{equation*}
    G'\oplus \widetilde{KSp}^0(\sph^n)\oplus \widetilde{KSp}^0(\sph^m).
\end{equation*}
\end{thm}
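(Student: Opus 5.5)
We prove Theorem \ref{thm:introKOKSp} by sandwiching the algebraic subgroup between a lower bound, built from explicit algebraic bundles, and an upper bound, given by the Real K-theory invariant $K\R^\star_\C$.

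\textbf{Lower bound.} The summands $\widetilde{KO}^0(\sph^n)$ and $\widetilde{KO}^0(\sph^m)$ are realised by pulling back along the two projections. By Bochnak--Kucharz, every class in $\widetilde{KO}^0(\sph^k)$, and likewise in $\widetilde{KSp}^0(\sph^k)$, is represented by an algebraic bundle, since spheres admit regular maps realising all homotopy classes into Grassmannians. Comparison maps are monomorphisms and compatible with products, so it remains to produce the summand $G$ inside $\widetilde{KO}^0(\sph^{n+m})$. The natural source is external products. If $a\in \widetilde{K}_0(\RR(\sph^n,\K_1))$ and $b\in\widetilde K_0(\RR(\sph^m,\K_2))$ are algebraic, then $a\boxtimes b$ is algebraic, with the tensor product taken over the appropriate field. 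Here $\R\otimes\R\to\R$, $\C\otimes\C\to\C$ followed by realification, and $\Hq\otimes_{\R}\Hq\cong \mathcal M_4(\R)$, so Morita equivalence gives a real class. The top-cell component of $a\boxtimes b$ is the image of $a\otimes b$ under the coefficient multiplication
\begin{equation*}
\widetilde{K\K_1}^0(\sph^n)\otimes\widetilde{K\K_2}^0(\sph^m)\to\widetilde{KO}^0(\sph^{n+m}).
\end{equation*}
Realification of complex products gives a further source. Using the known ring structure of $KO_\ast$, $KSp_\ast$ and $KU_\ast$ (Bott classes $\eta,\eta^2,\alpha,\beta$, and $c$, $r$), we check case by case mod $8$ that these products generate a subgroup of index $\varphi(n,m)$. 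For instance, when $n\equiv m\equiv 1$ the product $\eta\cdot\eta=\eta^2$ generates $\widetilde{KO}^0(\sph^2)$, consistent with index $1$. When $n,m$ are both odd and $\widetilde{KO}^0(\sph^{n+m})\cong\Z$, with $(n,m)\equiv(1,3)$, all products vanish, consistent with $\infty$.

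\textbf{Upper bound.} The class of an algebraic bundle lies in the image of $K\R^\star_\C(\sph^n\times\sph^m)\to \widetilde{KO}^0(\sph^n\times\sph^m)$. This holds because an algebraic real bundle extends, after stabilisation, to a Real bundle on some complexification $X_\C$, so its class factors through $K\R(X_\C)$. We then use the paper's computation of $K\R^\star_\C(\sph^k)$ together with the product splitting of Section \ref{sec:invariantsProduct}, which says that the top summand of $K\R_\C$ of the product is controlled by external products of $K\R_\C$ of the factors. This bounds the top-cell component of the image by the same product subgroup found in the lower bound. For $KSp$ we apply the same argument with the $(4)$-shifted Real theory, using that $KSp^0\cong KO^{4}$; this accounts for $\varphi(n,m+4)$.

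\textbf{Main obstacle.} The hardest step is the upper bound: we must show that the top component of $K\R_\C^\star(\sph^n\times\sph^m)$ contains nothing beyond products. That requires an explicit complexification, for instance the affine quadric $Q_n=\{\sum z_i^2=1\}$, which is $\Z/2$-equivariantly the tangent disc bundle of $\sph^n$ with antipodal-type conjugation. It also requires computing $K\R^\star(Q_n\times Q_m)$, or its projective compactification, and its image in $KO$ of the real locus. We would first try an equivariant Künneth argument. The complement $Q_n\setminus\sph^n$ is equivariantly $\sph^n\times(\text{free sign rep})$, so the restriction to the real locus factors through a cofibre sequence, and the resulting cokernel has order exactly $\varphi$. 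The remaining work is tabulating the eight-by-eight cases and matching them against the $\varphi$ table.
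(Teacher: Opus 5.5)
\textbf{The upper bound has a genuine gap.} Your lower bound is essentially the paper's: external products via $KO\times KO$, the pairing $KSp\times KSp\to KO$, and realification of $KU$-products, all fed by the fact that reduced classes on spheres are algebraic. The paper only organises the case check differently, as $I^n\subset A^n$ according to $n \bmod 4$.

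The upper bound is where the argument fails. Section~\ref{sec:invariantsProduct} does not say that the top summand $K\R^\star_\C(\sph^n\times\sph^m,\sph^n\vee\sph^m)$ is spanned by external products of $K\R_\C$-classes of the factors. Observation~\ref{obs:productsExist} only says such products lie \emph{in} it, which is a lower bound.

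The spanning statement is in fact false. Take $n=m=2$. External products of classes in $K\R^\star_\C(\sph^2)$ (by invertibility of $\sigma$, products from the ideal $I^2=(2,\eta^2,\alpha)$) only reach $2\alpha\Z\subset KO^{-4}(\pt)$. But the realification of the external product of the generators of $\widetilde{KU}^0(\sph^2)$ gives $r(\beta^2)=\alpha$, because $r$ is not multiplicative. So $\varphi(2,2)=1$ cannot come out of such a bound, and your two ``product subgroups'' do not even coincide.

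Your fallback is not carried out, and as sketched it would not work. The affine quadric $Q_n$ is $T\sph^n$ with fibrewise $-1$ (not a product with a representation). It retracts equivariantly onto $\sph^n$, so $K\R^\star(Q_n\times Q_m)\to K\R^\star(\sph^n\times\sph^m)$ is an isomorphism and imposes no restriction. Every constraint comes from extending over the divisor at infinity of a projective compactification. The claim that ``the cokernel has order exactly $\varphi$'' is asserted, not derived.

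\textbf{The missing idea is Proposition~\ref{prop:suspensionFC}.} Let $\Rvar X$ and $\Rvar Y$ be complexifications of $\sph^n$ and $\sph^m$. The restriction from $K\R^\star(\Rvar X\times\Rvar Y,\Rvar X\vee\Rvar Y)$ to the real locus factors through
\[
K\R^\star(\Rvar X\times\sph^m,\Rvar X\vee\sph^m)\cong\widetilde{K\R}^{\star-m}(\Rvar X),
\]
because $\sph^m$ carries the trivial involution and the ordinary suspension isomorphism applies. Hence $KO^0_\C(\sph^n\times\sph^m,\sph^n\vee\sph^m)$ maps into $\widetilde{KO}^{-m}_\C(\sph^n)$, which has index $\varphi(n,m)$ by Corollary~\ref{cor:KOCSphereIndex}. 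Combined with $KO^0_{\C-\alg}\subset KO^0_\C$ (Proposition~\ref{prop:KCalgInKC}), this is the upper bound. No computation on the product is needed, only $\widetilde{KO}^p_\C(\sph^n)$ in \emph{all} degrees $p$. The paper obtains that from the regular map $\psi_n:\R P^n\to\sph^n$ and the complexification $\C P^n$.

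\textbf{The $KSp$ case also hides a step.} ``The same argument with the shifted theory'' presupposes that algebraic $\Hq$-classes land in $KO^{-4}_\C$. A direct extension argument would give this only after you construct a Quaternionic comparison map into $K\R^{\star-4}$ of the complexification. The paper instead proves it (Proposition~\ref{prop:quaternionCycleClass}) by crossing with the algebraic generator $\theta$ on $\sph^4\cong\Hq P^1$ and applying Proposition~\ref{prop:suspensionFC}. After that, the top summand lands in $\widetilde{KO}^{-m-4}_\C(\sph^n)$, of index $\varphi(n,m+4)$.
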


For comparison, we include a table of the order of the groups $\widetilde{KO}^{0}(\sph^{n+m})$:
\begin{center}
\begin{tabular}{|l|l|l|l|l|l|l|l|l|} 
\hline
\diagbox{n}{m} & 0 & 1     & 2 & 3     & 4 & 5     & 6 & 7      \\ 
\hline
0           & $\infty$    & 2        & 2    & 1        & $\infty$    & 1        & 1    & 1         \\ 
\hline
1              & 2        & 2    & 1        & $\infty$    & 1        & 1    & 1        & $\infty$ \\ 
\hline
2                  & 2    & 1        & $\infty$    & 1        & 1    & 1        & $\infty$ & 2 \\ 
\hline
3               & 1        & $\infty$    & 1        & 1    & 1        & $\infty$ & 2 & 2\\ 
\hline
4                  & $\infty$    & 1        & 1    & 1        & $\infty$ & 2 & 2& 1\\
\hline
5               & 1        & 1    & 1        & $\infty$ & 2 & 2& 1& $\infty$\\
\hline
6                   & 1    & 1        & $\infty$ & 2 & 2& 1& $\infty$& 1\\
\hline
7              & 1        & $\infty$ & 2 & 2& 1& $\infty$& 1& 1\\         
\hline
\end{tabular}
\end{center}

Besides mere computation of algebraic K-groups, our results have applications in the more concrete classical problem of classifying homotopy classes of maps from $\sph^n\times \sph^m$ to $\sph^{n+m}$ represented by regular maps. Before presenting our results, we briefly review the current state of the art concerning this problem.

First of all, using algebraic K-theory Bochnak and Kucharz were able to show the following:
\begin{thm}[{\cite[Theorem 3.1]{bochnakRealizationHomotopyClasses1987}}]\label{thm:BKNoMap}
Let $n,m$ be odd positive integers. Then, every regular map from $\sph^n\times \sph^m$ to $\sph^{n+m}$ is homotopic to the constant map.
\end{thm}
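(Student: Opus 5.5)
We want to show that for $n,m$ odd every regular map $f:\sph^n\times\sph^m\to\sph^{n+m}$ is null-homotopic. Since homotopy classes of maps into $\sph^{n+m}$ from an $(n+m)$-dimensional closed oriented manifold are classified by degree (Hopf), it suffices to show $\deg f=0$. The plan is to detect the degree through complex K-theory, using Theorem~\ref{thm:introKU}.

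\textbf{Step 1 (pull back a generator).} Since $n+m$ is even, $\widetilde{KU}^0(\sph^{n+m})\cong\Z$. I will choose a generator $\beta$ represented by an algebraic complex vector bundle on $\sph^{n+m}$, i.e.\ $\beta$ lying in the image of $\widetilde K_0(\RR(\sph^{n+m},\C))$. This is Theorem~\ref{thm:introKU} applied to $\sph^{n+m}$, or directly the classical fact that every class in $\widetilde{KU}^0(\sph^{2k})$ is algebraic, via the Clifford-module construction of Bochnak--Kucharz. Because $f$ is regular, pulling back the projective $\RR(\sph^{n+m},\C)$-module along $f$ gives an algebraic bundle, so $f^\ast\beta$ lies in the image of $\widetilde K_0(\RR(\sph^n\times\sph^m,\C))$. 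The pullback is compatible with the comparison maps by naturality.

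\textbf{Step 2 (locate $f^\ast\beta$).} I first factor $f$ through the smash product, up to homotopy. The composite $f'=f\circ(\text{collapse})$ need not equal $f$. However, in the splitting of the Observation, $f^\ast\beta$ has components $(\deg f)\cdot\gamma$ in $\widetilde{KU}^0(\sph^{n+m})$ (with $\gamma$ a generator) and components $(f|_{\sph^n})^\ast\beta$ and $(f|_{\sph^m})^\ast\beta$ on the two factors. The last two vanish, since $\widetilde{KU}^0(\sph^{\mathrm{odd}})=0$.

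To justify the first component, I use the fact that the top summand is the image of $\widetilde{KU}^0(\sph^n\wedge\sph^m)$ under the collapse map, and that a map $X\to\sph^{n+m}$ acts on $\widetilde{KU}^0$ through its degree. Concretely, I would argue via the Chern character: in the top cohomology, $\mathrm{ch}_{(n+m)/2}(f^\ast\beta)=\deg f\cdot\mathrm{ch}(\beta)$, and $\widetilde{KU}^0(\sph^n\times\sph^m)$ is torsion-free here, so this determines the class.

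\textbf{Step 3 (conclude).} By Theorem~\ref{thm:introKU} with $n,m$ odd, the image of the algebraic K-group is $0\oplus\widetilde{KU}^0(\sph^n)\oplus\widetilde{KU}^0(\sph^m)=0$. Hence $f^\ast\beta=0$, so $\deg f=0$, and $f$ is null-homotopic.

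The main obstacle is Step 2's identification of $f^\ast\beta$ with $\deg f$ times the top generator, with the Chern-character argument being the delicate point. I would handle it by noting that $\mathrm{ch}$ is rationally injective and that all groups involved are free.

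A caveat: the paper attributes the underlying result to Bochnak--Kucharz, and the "$G=0$" part of Theorem~\ref{thm:introKU} is presumably itself proved from their work. If that part is unavailable, the substitute is their original argument. For $n,m$ odd, the top Chern class of any algebraic complex bundle on $\sph^n\times\sph^m$ vanishes, because $H^{n+m}_{\C-\alg}$ is zero: the complexification of $\sph^n\times\sph^m$ is a product of quadrics whose middle cohomology restricts trivially to the real top class. This is where the real work lies.
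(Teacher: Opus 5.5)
Your proof is correct. Note that the paper does not prove this statement itself: it cites Bochnak--Kucharz and only remarks that a new proof follows from the invariants $K\R^\ast_\C$ by an argument like that for Theorem~\ref{thm:23mod4noMap}, i.e.\ via Proposition~\ref{prop:conditionForNoMapSpheres}. You instead pull back an algebraic generator and invoke Theorem~\ref{thm:introKU}.

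Your worry about circularity is unfounded. The paper proves $G=0$ for $n,m$ odd (Theorem~\ref{thm:KUCalgSpheres}) from $\widetilde{KU}^\ast_\C(\sph^{\mathrm{odd}})=0$. That vanishing rests only on Swan's theorem and the $\R P^n$ argument of Proposition~\ref{prop:KOCKUCvanishing}, so your closing substitute is not needed.

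What your version buys is that it detects every degree at once. Proposition~\ref{prop:conditionForNoMapSpheres} as stated only excludes degree one, and for $n,m$ both odd Proposition~\ref{prop:dichotomy} is unavailable to bootstrap from degree one. So the paper's route needs the easy degree-$d$ variant, e.g.\ $d\cdot I^{n+m}\subset I^n$ with $I^{n+m}_{(0)}\supseteq 2\Z$ and $I^n_{(0)}=0$, forcing $d=0$.

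Two small streamlinings:
\begin{itemize}
\item Step~2 follows directly from Hopf's theorem. Write $f\simeq g_d\circ c$, with $c$ the collapse onto $\sph^n\wedge\sph^m$ and $\deg g_d=d$; then $f^\ast\beta=d\,c^\ast\beta$ with $c^\ast$ injective. Alternatively, once you know $f^\ast\beta=0$, just use $\deg f\cdot\mathrm{ch}(\beta)=f^\ast\mathrm{ch}(\beta)=\mathrm{ch}(f^\ast\beta)=0$, and no torsion-freeness is needed.
\item The algebraicity of $\beta$ should be cited from Swan's theorem (Theorem~\ref{thm:KCalgSphere}), not from Theorem~\ref{thm:introKU}, which concerns products.
\end{itemize}
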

On the contrary, if at least one of the numbers $n$ or $m$ is even, then there is a very simple construction of a regular (indeed polynomial) map $f:\sph^n\times \sph^m\rightarrow \sph^{n+m}$ of topological degree two \cite[Theoreme 14]{lodayApplicationsAlgebriquesTore1973}. Combining this with \cite[Theorem 1.2]{baneckiAlgebraicHomotopyClasses2024} we obtain the following:
\begin{prop}\label{prop:dichotomy}
If at least one of the numbers $n$ or $m$ is even, then exactly one of the following holds true:
\begin{enumerate}
    \item either every integer is realised as the topological degree of a regular map from $\sph^n\times \sph^m$ to $\sph^{n+m}$,
    \item or the set of degrees which arise this way coincides with the set of even integers.
\end{enumerate}
\end{prop}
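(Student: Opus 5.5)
We argue by studying the set $D\subset\Z$ of topological degrees of regular maps $\sph^n\times\sph^m\rightarrow\sph^{n+m}$. The plan is to show that $D$ is a subgroup of $\Z$ containing $2$. Then $D$ is either $\Z$ or $2\Z$, and since these alternatives are mutually exclusive, exactly one of (1), (2) holds.

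First, $2\in D$: by \cite[Theoreme 14]{lodayApplicationsAlgebriquesTore1973}, since one of $n,m$ is even there is a polynomial map $f:\sph^n\times\sph^m\rightarrow\sph^{n+m}$ of degree two. Second, $D$ is closed under negation and contains $0$. For $0$ we take a constant map. For negation we compose with the reflection $(x_0,\dots,x_{n+m})\mapsto(-x_0,x_1,\dots,x_{n+m})$ of $\sph^{n+m}$, which is regular of degree $-1$.

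The real content is closure under addition, i.e.\ that regular homotopy classes (or at least degrees) can be added. This is what \cite[Theorem 1.2]{baneckiAlgebraicHomotopyClasses2024} supplies. As I recall, it says that for a compact nonsingular real algebraic set $X$, the homotopy classes of maps $X\rightarrow\sph^k$ represented by regular maps form a subgroup whenever the cohomotopy set carries a group structure, or at least that the set of regular degrees into $\sph^{\dim X}$ is closed under sums. I would invoke it with $X=\sph^n\times\sph^m$ and $k=n+m=\dim X$. Here the cohomotopy set $\pi^{n+m}(X)$ is identified with $H^{n+m}(X;\Z)\cong\Z$ via the degree (Hopf), so it is a group, and the degree is additive.

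Since $D$ is then a subgroup of $\Z$ containing $2$, it equals $\Z$ or $2\Z$, giving (1) or (2) respectively.

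The main obstacle is matching the exact hypotheses of \cite[Theorem 1.2]{baneckiAlgebraicHomotopyClasses2024}, namely that sums of degrees of regular maps into the top-dimensional sphere are again realised. If that theorem only gives a weaker statement, say that the regular classes are closed under the action of regular self-maps of spheres, the fallback is as follows. Compose the degree-two map with regular self-maps of $\sph^{n+m}$ of every degree, which exist for spheres onto themselves by \cite{bochnakAlgebraicApproximationMappings1987}-type constructions. This realises all even degrees directly. Any odd degree realised then generates all of $\Z$ together with the even ones, again via the additivity granted by the cited theorem.
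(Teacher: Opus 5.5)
Your proposal is correct and is essentially the paper's argument: Loday's degree-two polynomial map combined with \cite[Theorem 1.2]{baneckiAlgebraicHomotopyClasses2024}. That theorem makes the set of degrees of regular maps a subgroup of $\pi^{n+m}(\sph^n\times\sph^m)\cong\Z$ containing $2$, hence equal to $\Z$ or $2\Z$. Your separate checks for $0$ and for negation are redundant once the subgroup statement is cited, and your fallback paragraph is not independent of that theorem, since it still needs additivity to pass from one odd degree to all of $\Z$; neither point affects the main argument.
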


So far, it has been unknown whether the second alternative could ever hold true. Prior to this work, the only result beyond Proposition \ref{prop:dichotomy} applicable when one of the dimensions is even was the following:
\begin{thm}[{\cite[Theoreme 12]{lodayApplicationsAlgebriquesTore1973}}]\label{thm:Loday}
Let $n$ and $m$ be natural numbers. Write $n$ as
\begin{equation*}
    n=2^{4a+b}(2c+1),
\end{equation*}
where $a\geq 0$, $ 3\geq b\geq 0$ and $c\geq 0$ are integers. Assume that $m$ is strictly smaller than the \emph{Radon-Hurwitz} number $\rho(n):=8a+2^b$. Then, every map $f:\sph^n\times \sph^m \rightarrow \sph^{n+m}$ is homotopic to a regular one.
\end{thm}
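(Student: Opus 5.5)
This is Loday's theorem, a cited result, so I will sketch the classical argument behind it: write down explicit polynomial maps from Clifford module structures, then combine them using the group structure on homotopy classes coming from the target sphere.

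\emph{Step 1: the bilinear map.} By the Hurwitz--Radon theorem, the condition $m<\rho(n)$ gives $m$ real $n\times n$ matrices $A_1,\dots,A_m$ with
\begin{equation*}
A_i^\intercal A_i=I,\qquad A_i^\intercal A_j+A_j^\intercal A_i=0\quad (i\neq j).
\end{equation*}
Here we view $\R^n$ as a module over the Clifford algebra $C\ell_{m}$. Taking $A_0=I$, this yields a bilinear map $\beta:\R^{m+1}\times\R^n\to\R^n$, $\beta(y,x)=\sum_j y_jA_jx$, satisfying $|\beta(y,x)|=|y|\,|x|$.

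\emph{Step 2: a polynomial generator of degree one.} Restrict $\beta$ to $\sph^m\times\sph^{n-1}$. Since $\beta(y,\cdot)$ is an orthogonal map for each unit $y$, we get a map $\sph^m\to O(n)$, and the Hopf construction is applied to it. To land on $\sph^n$ rather than $\sph^{n-1}$, I would use a polynomial formula of Hopf type. For $(x,t)\in\sph^n\subset\R^n\times\R$ and $y\in\sph^m$, set
\begin{equation*}
F(x,t,y)=\bigl(2t\,\beta(y,x)+\dots,\; \dots\bigr)\in\R^{n+m+1}.
\end{equation*}
The components are chosen so that the sum of their squares equals $(|x|^2+t^2)^2\cdot 1$, by the norm identity from Step 1. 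The concrete choice I would try is $F(x,t,y)=\bigl(2t\,\beta(y,x)\,\text{-type terms},\,|x|^2-t^2\bigr)$, using the composition identity to make the whole thing norm-preserving. The aim is to obtain $F:\sph^n\times\sph^m\to\sph^{n+m}$ of degree $\pm1$, which is the $J$-homomorphism image of the Clifford class. I expect this to be the main obstacle: the formula must be exactly polynomial and norm-preserving, and its degree must be computed as $\pm1$ rather than $2$. I would compute the degree as a local degree at a regular value, or via the Hopf invariant or $J$-image interpretation.

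\emph{Step 3: all degrees.} Homotopy classes $[\sph^n\times\sph^m,\sph^{n+m}]$ split through the collapse map $\sph^n\times\sph^m\to\sph^n\vee\sph^m\vee\sph^{n+m}$. Using the cofibration and the suspension structure, this gives an extension of $\pi_{n+m}(\sph^{n+m})=\Z$ by the classes on the axes, namely $\pi_{n+m}(\sph^n)$ and $\pi_{n+m}(\sph^m)$ factored through the top cell. So the maps are classified by degree, possibly together with classes coming from lower cells. I would show that composing $F$ with polynomial self-maps of $\sph^{n+m}$ of every degree realises every degree; such self-maps exist by the classical result that all classes in $\pi_k(\sph^k)$ are regular. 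The remaining classes, which factor through $\sph^n\times\sph^m\to\sph^n$ or $\sph^m$ followed by a map into $\sph^{n+m}$, are null because $n,m<n+m$. Hence every map is homotopic to one of the form $g\circ F$ with $g$ regular, which proves the theorem.
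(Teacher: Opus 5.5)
Your Step~1 is fine: $\beta$ is exactly the Hurwitz--Radon map $H$ that the paper uses. The gap is Step~2, which carries the entire content of the theorem. It is not carried out, and the direction you sketch does not lead to the map you need.

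\begin{itemize}
\item Your only explicit formula, $(2t\,\beta(y,x),\,|x|^2-t^2)$, has $n+1$ components. So it defines a map $\sph^n\times\sph^m\to\sph^n$, not into $\sph^{n+m}$; indeed you state the aim as landing on $\sph^n$.
\item The Hopf construction applied to $\sph^m\times\sph^{n-1}\to\sph^{n-1}$ yields a map $\sph^{n+m}\to\sph^n$, an element of $\pi_{n+m}(\sph^n)$ in the image of $J$. That is the wrong direction and carries no information about maps $\sph^n\times\sph^m\to\sph^{n+m}$. Neither it nor a Hopf-invariant computation can produce your degree-one map.
\item The natural repair fails as stated. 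Take $F\colon\R^{n+1}\times\R^{m+1}\to\R^{n+m+1}$, $F((x_1,\hat x),y)=(x_1y,\beta(y,\hat x))$ with $x_1\in\R$, $\hat x\in\R^n$; it is normed bilinear. But its restriction to the \emph{unit} spheres is invariant under the free involution $(v,w)\mapsto(-v,-w)$. Regular fibres therefore come in pairs, and the degree is even (zero when $n+m$ is odd).
\end{itemize}

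The missing idea is the one in Lemma~\ref{lem:bilinear+odd=>existsRegular}. Keep a map of this form; the paper obtains one by applying Proposition~\ref{prop:niceRecursion} to $(0,0)$ and then to $(0,m)$ with $k=n$. Note that its first coordinate is $x_1y_1$. Then realise every sphere as $\{\Vert x\Vert^2=2x_1\}$, the unit sphere centred at $(1,0,\dots,0)$. You get $\Vert\tfrac12F(x,y)\Vert^2=\tfrac14\Vert x\Vert^2\Vert y\Vert^2=x_1y_1=2\cdot(\tfrac12F)_1(x,y)$, so $f:=\tfrac12F$ is a polynomial map $\sph^n\times\sph^m\to\sph^{n+m}$.

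The paper shows $\deg f$ is odd as follows.
\begin{itemize}
\item It compares $f$, via the blow-ups $\psi_k$, with the induced map $\varphi\colon\R P^n\times\R P^m\to\R P^{n+m}$.
\item It uses $\varphi^\ast c=a+b$ together with the oddness of $\binom{n+m}{n}$. Here $m<\rho(n)\le 2^t$, where $2^t$ is the exact power of $2$ dividing $n$, so $n$ and $m$ have disjoint binary digits.
\item It then concludes with Proposition~\ref{prop:dichotomy}.
\end{itemize}

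In Loday's situation you can even get degree $\pm1$ directly, which makes your Step~3 work as written. Write $z=(z',z'')\in\R^{m+1}\times\R^n$, and let $z_1$ be the first coordinate of $z'$. Then $f$ maps $\{x_1y_1\neq0\}$ bijectively onto $\{z_1\neq0\}$, with regular inverse
\[
z\longmapsto\Bigl(\tfrac{\Vert z'\Vert^2}{z_1}\bigl(1,\beta(z',\cdot)^{-1}z''\bigr),\ \tfrac{2z_1}{\Vert z'\Vert^2}\,z'\Bigr).
\]
Hence $\deg f=\pm1$. Composing with regular self-maps of $\sph^{n+m}$ of arbitrary degree then realises every homotopy class, by Hopf's theorem.

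Your homotopy-theoretic discussion in Step~3 is muddled. The groups attached to the axes are $\pi_n(\sph^{n+m})$ and $\pi_m(\sph^{n+m})$, both zero. It is enough to cite Hopf's degree theorem for the closed oriented manifold $\sph^n\times\sph^m$.
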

For small values of $m$ this provides many examples of pairs $(n,m)$ with $n$ even for which the first alternative in Proposition \ref{prop:dichotomy} holds true. In the long run, however, the inequality forces $n$ to be exponentially larger than $m$, and the pairs treated by Theorem \ref{thm:Loday} become rather sparse.

Motivated, among other reasons, by the lack of known pairs for which the second alternative in Proposition \ref{prop:dichotomy} is known to hold true, in 2022 Bochnak and Kucharz posed the following conjecture:
\begin{conj}[{\cite[Conjecture II]{bochnakApproximationMapsReal2022}}]\label{conj:BK}
Let $n$ be a positive odd integer, and let $X$ be a nonsingular compact connected real algebraic set of dimension $n$. Then, every continuous map from $X$ to $\sph^n$ is homotopic to a regular one.
\end{conj}
The conjecture is known to hold true for $n=1$; prior to this work it was open for all other values of $n$.

We can now pass to the presentation of our new results. Using the functor $K \R_\C^\star(X)$ we are able to show the following:
\begin{thm}\label{thm:23mod4noMap}
Let $n$ and $m$ be positive integers. Assume that one of them is congruent to $2$ modulo $4$, and that the other one is congruent to $2$ or $3$ modulo $4$. Then, a continuous map $f:\sph^n\times \sph^m\rightarrow \sph^{n+m}$ is homotopic to a regular one if and only if $\deg f$ is even.
\end{thm}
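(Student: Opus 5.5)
The "if" direction needs no new input. Since one of $n,m$ is congruent to $2$ modulo $4$, it is even, so Proposition~\ref{prop:dichotomy} applies. In either alternative every even integer is the degree of a regular map. Degree classifies homotopy classes of maps $\sph^n\times\sph^m\to\sph^{n+m}$ by Hopf's theorem, so every $f$ of even degree is homotopic to a regular map.

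For the ``only if'' direction, Proposition~\ref{prop:dichotomy} reduces the problem to exhibiting a single odd degree that is not realised; it is enough to show that no regular map has degree $1$. I plan to use the K-theoretic description in Theorem~\ref{thm:introKOKSp}, which I assume is proved later via the invariant $K\R^\star_\C$ and may be cited here. The argument runs as follows.

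Suppose $f$ is regular of degree $d$. Pulling back the standard (stably) algebraic vector bundles on $\sph^{n+m}$ gives
\[
f^\ast\bigl(\widetilde K_0(\RR(\sph^{n+m},\K))\bigr)\subset \widetilde K_0(\RR(\sph^n\times\sph^m,\K)).
\]
On the standard sphere every topological bundle class is algebraic, so $\widetilde K_0(\RR(\sph^{n+m},\K))$ maps onto $\widetilde{KO}^0(\sph^{n+m})$ (respectively $\widetilde{KSp}^0(\sph^{n+m})$). Topologically, $f$ is homotopic to the composite of the collapse map $\sph^n\times\sph^m\to\sph^{n+m}$ with a map of degree $d$. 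Under the splitting in the Observation, $f^\ast$ therefore sends a class $x$ on $\sph^{n+m}$ to $(d\cdot x,0,0)$, because a degree-$d$ self-map acts by multiplication by $d$ on the reduced cohomology of a sphere. It follows that $d\cdot\widetilde{KO}^0(\sph^{n+m})\subset G$, which is the subgroup of index $\varphi(n,m)$, and likewise $d\cdot\widetilde{KSp}^0(\sph^{n+m})\subset G'$, of index $\varphi(n,m+4)$.

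It remains to check from the tables that, under the hypotheses, one of these indices is $2$, so that $d$ must be even. Since $\varphi$ is symmetric, I may take $n\equiv2\pmod4$ and $m\equiv2$ or $3\pmod4$, and then run through the residues modulo $8$.
\begin{itemize}
\item For $n\equiv2$ and $m\in\{6,7\}$, the $KO$ table gives $\varphi(n,m)=2$.
\item For $n\equiv2$ and $m\in\{2,3\}$, we have $m+4\in\{6,7\}$, so $\varphi(n,m+4)=2$ and the $KSp$ statement applies.
\item For $n\equiv6$ and $m\in\{2,3\}$, $\varphi(n,m)=2$.
\item For $n\equiv6$ and $m\in\{6,7\}$, $m+4\equiv 2,3$, so $\varphi(n,m+4)=2$.
\end{itemize}
In each case the relevant group $\widetilde{KO}^0(\sph^{n+m})$ or $\widetilde{KSp}^0(\sph^{n+m})$ is cyclic of order $2$, since $n+m\equiv 0,1 \pmod 8$ or the analogous degrees shifted by $4$. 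The containment then forces $d$ to be even.

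All of the difficulty lies in Theorem~\ref{thm:introKOKSp} itself. Proving it requires computing $K\R^\star_\C(\sph^n)$ and showing that $K\R^\star_\C$ of the product detects the index-$2$ obstruction on the top cell. Taking that result as given, the only care needed here is the naturality of the splitting under $f^\ast$ and the fact that $f$ factors up to homotopy through the top cell.
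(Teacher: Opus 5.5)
Your argument is correct, but it reaches the obstruction by a different route from the paper.

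\textbf{The paper's route.} The paper applies Proposition~\ref{prop:conditionForNoMapSpheres} directly to $K\R^\star$. A regular map of degree one would force the inclusion of graded ideals $I^{n+m}\subset I^n$. Theorem~\ref{thm:KOCSphere} rules this out uniformly: $\eta\in I^{n+m}$ because $n+m\equiv 0,1\pmod 4$, while $\eta\notin I^n$ because $I^n$ is $(2,\eta^2,\alpha)$ or $(0)$. That proof never uses the product computations of Section~\ref{sec:computation:algebraicKtheory}, which the paper explicitly flags as unnecessary for applications to maps. Because $K\R^\star_\C$ is a module over all of $KO^\ast(\pt)$, the paper can place the obstruction in whichever degree carries $\eta$, so no case distinction is needed.

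\textbf{Your route.} You combine functoriality of $K_0(\RR(-,\K))$ and Swan's Theorem~\ref{thm:KCalgSphere} on the target with Theorem~\ref{thm:introKOKSp}. Only the upper-bound half of that theorem is actually used. That half comes from Propositions~\ref{prop:KCalgInKC}, \ref{prop:quaternionCycleClass} and \ref{prop:suspensionFC} together with Theorem~\ref{thm:KOCSphere}, so the underlying input is essentially the same as the paper's. Algebraic vector bundles only see degree $0$ (real) and degree $-4$ (quaternionic) of $KO$. This is why you are forced into the residue-by-residue check and need the quaternionic statement in half of the cases.

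\textbf{What your route buys.} It gives a more classical and concrete statement: the obstruction is witnessed by an honest algebraic real or quaternionic vector bundle on $\sph^{n+m}$ whose pullback under the collapse map is not topologically isomorphic to an algebraic one. It also shows directly that every regular map has even degree, without appealing to Proposition~\ref{prop:dichotomy} in the ``only if'' direction.

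\textbf{One small slip.} The relevant group is not always of order $2$. In your normalisation, whenever $m\equiv 2\pmod 4$ it is infinite cyclic. For example, $\widetilde{KO}^0(\sph^8)\cong KO^{-8}(\pt)\cong\Z$ for $(n,m)=(2,6)$, and $\widetilde{KSp}^0(\sph^4)\cong\Z$ for $(n,m)=(2,2)$; in these cases $G$ or $G'$ is $2\Z$. This does not affect the conclusion: for a cyclic group $C$ with a subgroup $G$ of index $2$, one has $dC\subset G$ exactly when $d$ is even.
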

This provides the first family of examples of pairs $(n,m)$ for which the second alternative in Proposition \ref{prop:dichotomy} holds true. Moreover, this disproves Conjecture \ref{conj:BK} for all $n$ congruent to $1$ modulo $4$, $n>1$. 

In the other direction, in the current paper we were also able to generalise Theorem \ref{thm:Loday}. The precise result is slightly technical and is explained in Section \ref{sec:products:bilinear}. Instead of stating it here, we present the following corollary of our construction:
\begin{thm}
Let $n$ and $m$ be two positive integers. Assume that $n\leq 8$ and that the number $\binom{n+m}{n}$ is odd. Then, every continuous map from $\sph^n\times \sph^m$ into $\sph^{n+m}$ is homotopic to a regular one.
\end{thm}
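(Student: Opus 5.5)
We reduce the statement to producing a single regular map of odd degree. By Proposition \ref{prop:dichotomy}, whenever one of $n,m$ is even, the set of degrees of regular maps $\sph^n\times\sph^m\to\sph^{n+m}$ is either all of $\Z$ or exactly the even integers. Homotopy classes of maps from $\sph^n\times\sph^m$ to $\sph^{n+m}$ are classified by degree, since $\sph^n\times\sph^m$ is a closed orientable $(n+m)$-manifold. So it suffices to construct one regular map of odd degree. First note that the hypothesis forces one of $n,m$ to be even. If both were odd, Lucas' theorem would give $\binom{n+m}{n}\equiv 0 \pmod 2$, because the lowest binary digits of $n$ and $m$ both equal $1$ and so produce a carry. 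Hence Proposition \ref{prop:dichotomy} applies.

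To produce an odd-degree map we follow Loday's approach through bilinear maps, in the spirit of Theorem \ref{thm:Loday}. Suppose $B:\R^{n+1}\times\R^{m+1}\to\R^{n+m+1}$ is a bilinear map that is nonsingular, meaning $B(x,y)=0$ implies $x=0$ or $y=0$. Then $(x,y)\mapsto B(x,y)/|B(x,y)|$ is regular on $\sph^n\times\sph^m$, since $|B|^2$ is a nonvanishing polynomial there. Its degree is governed by the Hopf construction. The plan is to weaken nonsingularity: we want a regular map built from an \emph{axial}-type or Hopf-type construction whose degree is computed mod $2$ by a cohomological count.

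Concretely, consider Hopf's theorem. A nonsingular symmetric bilinear map $\R^{k}\times\R^{k}\to\R^{k}$-type structures relate to the Stiefel–Whitney classes of $\mathbb{RP}$, and the mod $2$ condition $\binom{n+m}{n}$ odd is exactly the Hopf–Stiefel condition in $H^\ast(\mathbb{RP}^n\times\mathbb{RP}^m;\Z/2)$. In that ring, $(a+b)^{n+m}=\binom{n+m}{n}a^nb^m$. So we aim to find polynomial maps $P:\sph^n\times\sph^m\to\R^{n+m+1}$ whose zero locus is controlled, and whose normalization has degree congruent mod $2$ to $\binom{n+m}{n}$. For example, take the map $(x,y)\mapsto \left(\text{coefficients of } (x_0+\dots)(y_0+\dots)\right)$ obtained from polynomial multiplication $\R[t]_{\le n}\times\R[t]_{\le m}\to\R[t]_{\le n+m}$. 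This map is nonsingular because $\R[t]$ is a domain. Its induced map on spheres is regular, and its degree mod $2$ can be computed by counting preimages of a generic point: the number of factorizations of a degree-$(n+m)$ real polynomial with all real roots into factors of degrees $n$ and $m$, up to scalars. That count is $\binom{n+m}{n}$.

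This suggests the whole theorem might hold without the $n\le 8$ hypothesis. The main obstacle is then to reconcile the argument with that hypothesis, and with the sign and degree accounting. The polynomial multiplication map is even in both variables jointly ($f(-x,y)=-f(x,y)$ after normalization), so one must check carefully whether it descends to $\sph^n\times\sph^m\to\sph^{n+m}$ with the claimed degree or only has degree $0$ mod $2$ due to the antipodal identification. Careful preimage counting with orientations, possibly via the composite with the antipodal map on the target, is the step I expect to require real work. If it fails, the fallback is to combine Loday's Radon–Hurwitz bilinear maps for the finitely many $n\le 8$ (where $\rho$ and the octonion/Clifford structure are explicit) with a reduction of $m$ modulo a power of $2$. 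Such a reduction would be achieved by composing with regular maps $\sph^{m}\to\sph^{m'}$ and suspension-type joins, case by case for $n\le 8$.
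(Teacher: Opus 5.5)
Your reduction is fine: Hopf's theorem classifies the maps by degree, Lucas' theorem forces one of $n,m$ to be even, and by Proposition \ref{prop:dichotomy} a single regular map of odd degree suffices. The gap is that your construction cannot produce such a map.

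\textbf{Why the main construction fails.} For any nonsingular bilinear $B$, the normalized map $g(x,y)=B(x,y)/\Vert B(x,y)\Vert$ satisfies $g(-x,-y)=g(x,y)$. So $g$ factors through the two-sheeted covering $\sph^n\times\sph^m\to(\sph^n\times\sph^m)/\{\pm1\}$ by the free diagonal antipodal involution. Its mod $2$ degree is therefore $0$, and $\deg g$ is even whatever $B$ is. For instance, for $(n,m)=(1,2)$ the involution reverses orientation, so $\deg g=0$.

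Your count of $\binom{n+m}{n}$ factorizations counts preimages in $\R P^n\times\R P^m$. On $\sph^n\times\sph^m$, each factorization contributes the two points $(x,y)$ and $(-x,-y)$. What the count really computes is the mod $2$ degree of the induced map $\varphi:\R P^n\times\R P^m\to\R P^{n+m}$. That only yields a regular map $\R P^n\times\R P^m\to\sph^{n+m}$ of odd mod $2$ degree, as in the last observation of that section of the paper. As a warning sign, if your argument worked it would prove Conjecture \ref{conj:main} for all $n$, which the paper leaves open.

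\textbf{Why the fallback does not repair it.} Theorem \ref{thm:Loday} only treats $m<\rho(n)$ (or symmetrically), which misses e.g.\ $(n,m)=(2,5)$, where $\binom{7}{2}=21$ is odd. Composing with maps $\sph^m\to\sph^{m'}$ or taking joins neither evidently preserves regularity nor controls the parity of the degree.

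\textbf{The missing idea.} You need a way to pass from $\varphi$ on projective spaces to spheres without normalizing. The paper takes $F:\R^{n+1}\times\R^{m+1}\to\R^{n+m+1}$ that is \emph{normed}, $\Vert F(x,y)\Vert=\Vert x\Vert\,\Vert y\Vert$, and \emph{nice}, $F_1(x,y)=x_1y_1$. It models each $\sph^k$ as $\{\Vert x\Vert^2=2x_1\}$. Then $f=\frac{1}{2}F$ maps $\sph^n\times\sph^m$ into $\sph^{n+m}$ polynomially. Moreover $\psi_{n+m}\circ\varphi=f\circ(\psi_n\times\psi_m)$ for the blow-downs $\psi_k(x)=\frac{2x_1}{\Vert x\Vert^2}x$, which have mod $2$ degree one. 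Hence $\deg f\equiv\binom{n+m}{n}\pmod 2$ via $\varphi^\ast c=a+b$ (Lemma \ref{lem:bilinear+odd=>existsRegular}). Polynomial multiplication is not normed, so it does not fit this scheme.

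The hypothesis $n\leq 8$ enters only through the existence of nice normed maps, supplied by Proposition \ref{prop:niceRecursion}. From a nice pair $(n,m)$ and $\rho(k)>m$ one gets the nice pair $(n+k,m)$ via $F(x,y)=(G(\tilde x,y),H(\hat x,y))$, with $H$ a Hurwitz--Radon map, and Kummer's theorem keeps the binomial odd. For $n\leq 8$ one assembles $(n,m)$ from $(0,0)$ essentially digit by digit, using $\rho(1)=1$, $\rho(2)=2$, $\rho(4)=4$, $\rho(8)=8$ and $\rho(16)=9$.
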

Combining this with Theorems \ref{thm:BKNoMap} and \ref{thm:23mod4noMap} we obtain the following:
\begin{cor}
Assume that $n\leq 3$. Then, the number $\binom{n+m}{n}$ is odd if and only if every continuous map from $\sph^n\times \sph^m$ into $\sph^{n+m}$ is homotopic to a regular one.
\end{cor}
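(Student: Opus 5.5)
This is a case analysis on $n\in\{1,2,3\}$ (with $m\ge 1$), combining the previous theorem with Theorems \ref{thm:BKNoMap} and \ref{thm:23mod4noMap}. The first step is to record, via Lucas' theorem, when $\binom{n+m}{n}$ is odd: this happens exactly when the binary digits of $n$ and $m$ do not overlap.
\begin{itemize}
\item For $n=1$: $\binom{m+1}{1}=m+1$ is odd iff $m$ is even.
\item For $n=2$: $\binom{m+2}{2}$ is odd iff $m\equiv 0,1 \pmod 4$.
\item For $n=3$: $\binom{m+3}{3}$ is odd iff $m\equiv 0 \pmod 4$.
\end{itemize}

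The implication ``odd $\Rightarrow$ every map is homotopic to a regular one'' is then immediate from the preceding theorem, since $n\le 3\le 8$.

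For the converse I argue by contrapositive: assuming $\binom{n+m}{n}$ is even, I exhibit a continuous map that is not homotopic to a regular one. Since $\sph^n\times\sph^m$ is a closed connected oriented $(n+m)$-manifold, every integer is the degree of some continuous map to $\sph^{n+m}$. It therefore suffices to show that every regular map has even degree, or is null-homotopic.
\begin{itemize}
\item Case $n=1$, $m$ odd: both dimensions are odd, so Theorem \ref{thm:BKNoMap} shows every regular map is null-homotopic. A map of degree $1$ is then not homotopic to a regular one.
\item Case $n=2$, $m\equiv 2,3\pmod 4$: since $2\equiv 2\pmod 4$, Theorem \ref{thm:23mod4noMap} applies directly, and degree-one maps are not realised.
\item Case $n=3$, $m$ odd: Theorem \ref{thm:BKNoMap} applies again.
\item Case $n=3$, $m\equiv 2\pmod 4$: Theorem \ref{thm:23mod4noMap} applies with the roles of $n$ and $m$ swapped, since $m\equiv 2$ and $3\equiv 3\pmod 4$. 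Its hypothesis is symmetric in this sense.
\end{itemize}
These four cases exhaust the residues $m\equiv 1,2,3\pmod 4$ for $n=3$ and all cases with even binomial coefficient.

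There is no real obstacle here. The only care needed is getting the parity pattern of the binomial coefficients right and checking that the listed cases cover every even-binomial configuration. In particular, the case $n=2$, $m\equiv 3$ is covered by Theorem \ref{thm:23mod4noMap} and does not need Theorem \ref{thm:BKNoMap}.
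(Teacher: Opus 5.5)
Your proof is correct and follows the paper's own argument: the forward direction comes from the $n\le 8$ construction, and the converse comes from the binary-digit (Kummer/Lucas) parity check combined with Theorems \ref{thm:BKNoMap} and \ref{thm:23mod4noMap}. The only difference is that you write out the case analysis over $n\in\{1,2,3\}$ and the residues of $m$ modulo $4$ explicitly, which the paper leaves to the reader.
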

This uncovers a pattern in the problem considered. Quite naturally, we formulate the following conjecture:
\begin{conj}\label{conj:main}
Let $n$ and $m$ be positive integers. Then, the following conditions are equivalent:
\begin{enumerate}
    \item every continuous map from $\sph^n\times \sph^m$ into $\sph^{n+m}$ is homotopic to a regular one,
    \item the number $\binom{n+m}{n}$ is odd.
\end{enumerate}
\end{conj}
We provide some more evidence for this conjecture in Section \ref{sec:products:bilinear}. 

It seems reasonable to believe that computations of the invariants $E^\star_\C(\sph^n)$ for Real-oriented cohomology theories $E^\star$ stronger than $K\R^\star$ will bring new pairs for which there is no regular map $f:\sph^n\times \sph^m\rightarrow \sph^{n+m}$ of odd topological degree. Therefore, the next step in verification of the conjecture would be to compute these invariants.

\subsection{Applications in the algebraic approximation problem}\label{sec:introduction:approx}
The newly defined invariants admit another application in the problem of \emph{algebraic approximation of submanifolds}, which we recall here.

\begin{defi}
Let $X$ be a nonsingular real algebraic set, and let $M\subset X$ be a smooth compact submanifold of $X$. We say that $M$ \emph{admits an algebraic approximation in $X$} if for every neighbourhood $\mathcal U$ of the inclusion $i:M\hookrightarrow X$ in the $\mathcal C^\infty$-topology there is a map $j:M\hookrightarrow X$ in $\mathcal U$ whose image $j(M)$ is a nonsingular Zariski closed subset of $X$.

We say that $M$ \emph{admits a weak algebraic approximation in $X$} if for every neighbourhood $\mathcal U$ of the inclusion $i:M\hookrightarrow X$ in the $\mathcal C^\infty$-topology there is a map $j:M\hookrightarrow X$ in $\mathcal U$ whose image $j(M)$ is equal to the nonsingular locus of a Zariski closed subset of $X$.
\end{defi}
In the definition, and in this entire subsection, a \emph{manifold} is implicitly assumed to be without boundary, unless stated otherwise.
\begin{rem}
The second part of the definition makes use of the fact that in real algebraic geometry, the nonsingular locus of a real algebraic set $X$ may be both closed and open in the Euclidean topology in $X$. This is connected to the existence of the so-called \emph{noncentral points} (cf. \cite[Section 7.6]{bochnakRealAlgebraicGeometry1998}). 
\end{rem}

In general, deciding whether a submanifold of a given real algebraic set admits (weak) algebraic approximation may be a difficult task. Some obstructions to the validity of this condition are known, the most important of which are based on the notion of unoriented bordism, which we recall below.

Let $T$ be a topological space, let $N_1$ and $N_2$ be two closed manifolds of the same dimension $d$ and let $f_1:N_1\rightarrow T$ and $f_2:N_2\rightarrow T$ be two continuous maps. We say that $(N_1,f_1)$ and $(N_2,f_2)$ are \emph{bordant}, if there exists a smooth compact manifold with boundary $C$, and a diffeomorphism $\partial C\cong N_1\sqcup N_2$, with the property that the map $f_1\sqcup f_2$ extends to a map $f:C\rightarrow T$ defined on the entire manifold $C$. This defines an equivalence relation on the set of pairs of the form $(N,f)$, where $N$ is a manifold of dimension $d$ and $f:N\rightarrow T$ is a continuous map. The set of equivalence classes under this relation forms a group under the operation of disjoint union, denoted by $MO_d(T)$. The bordism class represented by a pair $(N,f)$ is usually denoted by $[f:N\rightarrow T]$.
\begin{defi}
Let $X$ be a real algebraic set. The subgroup of $MO_d(X)$ generated by classes of the form $[f:Y\rightarrow X]$, where $Y$ is a compact nonsingular real algebraic set and $f$ is a regular map, is denoted by $MO_d^\alg(X)$. Its elements are called \emph{algebraic bordism classes.}
\end{defi}
Whether a given bordism class is algebraic can be rephrased completely in terms of the functor $H_i^\alg(X;\Z/2)$ \cite{kucharzAlgebraicApproximationSubmanifolds2026}, which makes it verifiable in reasonable cases.

One obstruction to the existence of a (weak) algebraic approximation of $M$ is given in terms of the unoriented bordism class of the inclusion $i:M\hookrightarrow X$:
\begin{obs}\label{obs:approx=>bordism}
Let $X$ be a nonsingular real algebraic set, and let $M\subset X$ be a smooth compact submanifold of $X$. Assume that $M$ admits a weak algebraic approximation in $X$. Then, the bordism class of the inclusion $[i:M\hookrightarrow X]$ is algebraic.
\end{obs}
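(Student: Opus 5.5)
The plan is to pass from the weak approximation $j$ to a regular map from a compact nonsingular real algebraic set onto $j(M)$, and then use that $j$ is close to, hence homotopic to, the inclusion $i$.

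\emph{Step 1: homotopy.} Choose the neighbourhood $\mathcal U$ of $i$ in the $\mathcal C^\infty$-topology so small that every $j\in\mathcal U$ is a smooth embedding homotopic to $i$. Any embedding close enough in $\mathcal C^0$ is homotopic to $i$, since $X$ is a smooth manifold and one can use straight-line homotopies in a tubular neighbourhood, or in a Euclidean ambient space followed by a retraction. Then
\begin{equation*}
[i:M\hookrightarrow X]=[j:M\to X]\in MO_d(X),
\end{equation*}
because a homotopy $M\times[0,1]\to X$ is itself a bordism. Moreover, $j:M\to j(M)$ is a diffeomorphism onto its image. So it suffices to show that the class of the inclusion $j(M)\hookrightarrow X$ is algebraic.

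\emph{Step 2: resolution.} Let $Z\subset X$ be Zariski closed with $j(M)=\operatorname{Reg}(Z)$, the nonsingular locus. We may replace $Z$ by the Zariski closure of $j(M)$, which is pure of dimension $d$. By Hironaka's resolution of singularities there is a compact nonsingular real algebraic set $\tilde Z$ together with a proper regular map $\pi:\tilde Z\to Z$ that is an isomorphism over $\operatorname{Reg}(Z)$. Compactness holds because $j(M)$ is compact and hence $Z$ may be taken compact, or one passes to a projective closure. The real points of $\tilde Z$ then decompose as
\begin{equation*}
\tilde Z=\pi^{-1}(j(M))\sqcup \pi^{-1}(\operatorname{Sing}Z).
\end{equation*}

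\emph{Step 3: separating the components.} The key point is that $j(M)$ is both open and closed in $Z$ (see the earlier Remark). Hence $\pi^{-1}(j(M))$ is open and closed in $\tilde Z$, so it is a union of connected components of $\tilde Z$. A union of connected components of a compact nonsingular real algebraic set is again a nonsingular Zariski closed subset, up to biregular isomorphism (\cite{bochnakRealAlgebraicGeometry1998}). Call it $Y$. Then $\pi|_Y:Y\to j(M)$ is a regular map and a diffeomorphism onto $j(M)$. Composing with the inclusion into $X$ gives the regular map $\pi|_Y:Y\to X$, whose bordism class equals $[j(M)\hookrightarrow X]=[i]$. Therefore $[i]$ is algebraic.

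\emph{Main obstacle.} The delicate step is Step 3, namely ensuring that $\pi^{-1}(j(M))$ is genuinely a Zariski closed, nonsingular real algebraic set and that $\pi$ restricted to it is a diffeomorphism. This relies on the resolution being an isomorphism over the nonsingular locus and on components being algebraic, so I would cite those standard facts carefully. If that approach proves awkward, an alternative is to use the fact that a compact nonsingular real algebraic subset's fundamental class is algebraic. Specifically, I would argue that $\operatorname{Reg}(Z)$ carries a $\Z/2$-fundamental class in $H_d^\alg(X;\Z/2)$ and then appeal to the cited characterisation \cite{kucharzAlgebraicApproximationSubmanifolds2026}.
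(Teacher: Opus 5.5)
Your overall strategy is sound: replace $i$ by the homotopic embedding $j$, then exhibit $j(M)$ as a compact nonsingular algebraic set mapping regularly to $X$. The gap is in Step 3, which rests on a false general principle. A union of connected components of a compact nonsingular real algebraic set need \emph{not} be a nonsingular algebraic set, even up to biregular isomorphism compatible with the given regular map, so no reference can supply that step.

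Here is a counterexample. The curve $V=\{y^2=(x^2-1)(4-x^2)\}\subset\R^2$ is compact, nonsingular and irreducible, and consists of two ovals $O_1$ and $O_2$.
\begin{itemize}
\item Neither oval is Zariski closed, since each has Zariski closure $V$.
\item $O_1$ is not even the bijective image of a regular map $g\colon W\to\R^2$ from a compact nonsingular curve $W$. Passing to nonsingular projective complexifications, $g$ becomes a finite morphism of some degree $k$.
\item Non-real points of a real fibre come in conjugate pairs. A generic point of $O_2$ has no real preimage, forcing $k$ even; a generic point of $O_1$ has exactly one, forcing $k$ odd.
\end{itemize}
So the fact that $\pi^{-1}(j(M))$ is open and closed in $\tilde Z$ gives you nothing algebraic by itself.

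What actually makes Step 3 true is Zariski openness. The singular locus $\operatorname{Sing}(Z)=Z\setminus j(M)$ is Zariski closed, so $j(M)=\operatorname{Reg}(Z)$ is Zariski open in $Z$. Likewise $\pi^{-1}(j(M))$ is Zariski open in $\tilde Z$, which is the correct reason your $Y$ is algebraic: a Zariski open subset of a real affine variety is again affine (Section \ref{sec:preliminaries:real}).

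Once you see this, the resolution is superfluous. Pick a polynomial $g$ with zero set $\operatorname{Sing}(Z)$ and set $Y=\{(x,t)\in Z\times\R : t\,g(x)=1\}$.
\begin{itemize}
\item $Y$ is a real algebraic set, and the projection $Y\to j(M)$ is a biregular isomorphism with inverse $x\mapsto (x,1/g(x))$.
\item Hence $Y$ is compact, and it is nonsingular because its local rings are those of $Z$ at points of $\operatorname{Reg}(Z)$.
\item The projection $Y\to X$ is polynomial, and $[Y\to X]=[j(M)\hookrightarrow X]=[i]$.
\end{itemize}
This short argument is presumably why the paper states the observation without proof.

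Your fallback via $H_d^{\alg}(X;\Z/2)$ would not close the gap on its own. Algebraicity of $j_\ast[M]$ is only part of what the cited characterisation of algebraic bordism classes requires. It also concerns the classes $j_\ast(w_\omega(M)\frown[M])$, for which you give no argument.
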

Until only recently, it had been unknown whether the algebraicity of the bordism class of the inclusion was a sufficient condition for the existence of a (weak) algebraic approximation of $M$ in general. In 2024, the following positive result was obtained by Benoist:
\begin{thm}[{\cite[Theorem 0.6]{benoistSubvarietiesNonsingularReal2024}}]\label{thm:BenoistApprox}
Let $X$ be a nonsingular compact real algebraic set of dimension $n$. Let $M\subset X$ be its smooth compact submanifold of dimension $d$, the bordism class of whose inclusion $[i:M\hookrightarrow X]$ is algebraic. Assume that $2d<n$. Then $M$ admits an algebraic approximation in $X$. 
\end{thm}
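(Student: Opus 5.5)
The plan is to turn the bordism between $i$ and a regular map into an honest regular map from an algebraic model of $M$ itself. Then, using $2d<n$, we perturb that map inside a large family of regular maps until it becomes an embedding whose image is Zariski closed and nonsingular.

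\emph{Step 1: pass from the bordism to an algebraic model of $M$.} Since $[i:M\hookrightarrow X]$ is algebraic, there is a compact nonsingular real algebraic set $Y$ and a regular map $f:Y\to X$ with $[f]=[i]$ in $MO_d(X)$. Taking finite disjoint unions, which stay algebraic, we may assume this holds with a single $Y$. Choose a compact bordism $C$ with $\partial C\cong M\sqcup Y$ and a continuous map $F:C\to X$ extending $i\sqcup f$. We then apply the relative Nash--Tognoli theorem of Akbulut--King \cite{akbulutRelativeNashTheorem1981} to $C$, relative to the algebraic boundary component $Y$. We embed $C$ together with $F$ into $X\times\R^N$ and make the whole bordism algebraic while keeping $Y$ fixed. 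Intersecting with a generic hyperplane level, or taking the other boundary component, yields a nonsingular algebraic model $M'$ of $M$ and a regular map $g:M'\to X$. Moreover, under a diffeomorphism $M'\cong M$, the map $g$ is $\mathcal C^\infty$-close to $i$. The input that makes the maps regular, rather than merely Nash or smooth, is exactly that $f$ was already regular on $Y$.

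\emph{Step 2: a large family of regular deformations.} Embed $X$ in $\R^n\supset X$ with a regular retraction defined on a neighbourhood, or use a Zariski-open chart structure. We form a family
\begin{equation*}
g_t:M'\to X,\qquad t\in T\subset \R^K,
\end{equation*}
of regular maps with $g_0=g$. We choose it large enough that, on the complexifications, the family $(g_t)_\C:M'_\C\to X_\C$ separates points and tangent vectors to first order. For instance, we add $t$ times polynomial sections and project back by a regular retraction. For small $t$, each $g_t$ is $\mathcal C^\infty$-close to $g$, hence close to $i$.

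\emph{Step 3: general position, the main obstacle.} We must show that for generic small $t$ the map $g_t$ is an embedding whose image $g_t(M')$ is a \emph{nonsingular Zariski closed} subset of $X$. On the real locus, injectivity and immersivity follow from the usual Whitney count, since $2d<n$. The hard part is Zariski closedness together with nonsingularity. The Zariski closure of $g_t(M')$ is the real part of $(g_t)_\C(M'_\C)$, which may contain extra real points. These arise from pairs of conjugate non-real points $z,\bar z\in M'_\C$ with the same image, or from non-real points mapping to real points. We handle them by an incidence-variety dimension count over $T$:
\begin{enumerate}
\item pairs $(z,\bar z)$ form a real family of real dimension $2d$;
\item the condition $g_t(z)=g_t(\bar z)$, equivalently $g_t(z)\in X$, imposes $n$ real conditions;
\item by the separation property, these conditions are independent.
\end{enumerate}
Since $2d<n$, the bad locus projects to a proper semialgebraic subset of $T$. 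Similar counts rule out complex double points and complex ramification lying over real points. So for generic $t$ the real part of $(g_t)_\C(M'_\C)$ is exactly $g_t(M')$, and it is nonsingular there. I expect this complex-conjugate double-point analysis to be the heart of the argument: it is precisely where the hypothesis $2d<n$ is used beyond ordinary Whitney embedding, and where one must check that the family from Step 2 is rich enough on $M'_\C$, not just on $M'$.

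\emph{Step 4: conclusion.} Composing $g_t$ with the diffeomorphism $M\cong M'$ gives maps $j$ arbitrarily $\mathcal C^\infty$-close to $i$ whose images are nonsingular Zariski closed subsets of $X$. This is the required algebraic approximation.
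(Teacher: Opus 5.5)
The paper gives no proof of this theorem; it only quotes it from Benoist. Your proposal therefore has to stand on its own, and it has a genuine gap in Steps 2--3. Step 1 is essentially a known result of Akbulut--King and is not where the difficulty lies.

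\textbf{Step 2 fails in general, and Step 3 depends on it.} The family in Step 2 need not exist. A regular retraction of a Zariski open neighbourhood of $X$ in $\R^N$ onto $X$ need not exist. More seriously, regular maps from a fixed model $M'$ into $X$ can be rigid. Take $X=A(\R)$, the real locus of a real abelian variety $A$ of dimension $n\geq 3$, for instance a product of real elliptic curves.
\begin{itemize}
\item A regular map $M'\to X$ is a real rational map $M'_\C\dashrightarrow A$. Since $A$ contains no rational curves, it extends to a morphism.
\item Morphisms $M'_\C\to A$ near $g$ are translates of $g$ by points $a\in A(\R)$ (componentwise), because homomorphisms of abelian varieties form a discrete set.
\item In particular, no Zariski open subset of $\R^N$ admits a nonconstant regular map to $X$. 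So there is no regular retraction, and no rational charts either.
\item Since $A(\R)$ is a subgroup, for every $a\in A(\R)$ we have $\{z\in M'_\C\setminus M' : g(z)+a\in A(\R)\}=\{z\in M'_\C\setminus M' : g(z)\in A(\R)\}$.
\end{itemize}
So the conditions in your item (3) do not depend on $t$ at all. If Step 1 produces a conjugate pair $z,\bar z$ with real image, no deformation of the map with source $M'$ can ever remove it.

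\textbf{The indeterminacy problem.} Even when a large family of regular maps is available, $(g_t)_\C$ is only a rational map on $M'_\C$. Its denominators may vanish at non-real points, and the indeterminacy locus moves with $t$. The Zariski closure of $g_t(M')$ then contains the images of the exceptional loci of a $t$-dependent resolution. Those loci are where you would have to establish transversality of $z\mapsto g_t(z)$ to $X\subset X_\C$. Separating points and tangent vectors is not the relevant condition.

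The dimension heuristic $2d<n$ is the right one. The missing idea is a mechanism that moves the subvariety itself, or the algebraic model together with the map, so that non-real points are pushed off $X$. Supplying that mechanism is the substance of Benoist's theorem, and your plan does not provide it.
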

On the other hand, in the same paper Benoist was able to show that the assumption about the inequality $2d<n$ is essential:
\begin{thm}[{\cite[Theorem 0.7]{benoistSubvarietiesNonsingularReal2024}}]\label{thm:BenoistCounterex}
Let $c$ be a positive integer, with the property that $c+1$ has precisely two ones when written in binary. Let $d$ and $n$ be positive integers, such that $d\geq c$ and $n=d+c$. Then, there exist a nonsingular compact real algebraic set $X$ of dimension $n$ and a compact submanifold $M\subset X$ of dimension $d$, such that the following conditions are satisfied:
\begin{enumerate}
    \item the bordism class of the inclusion $[i:M\hookrightarrow X]$ is algebraic,
    \item if $j:Y\hookrightarrow X$ is the inclusion of a nonsingular algebraic subset of $X$ of dimension $d$, then $(Y,j)$ is \emph{not} bordant to $(M,i)$; in particular $M$ does not admit an algebraic approximation in $X$.
\end{enumerate}
\end{thm}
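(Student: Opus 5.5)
The plan is to find a cohomological obstruction that every \emph{nonsingular} algebraic subset $Y\subset X$ of codimension $c$ satisfies but that the bordism class of an algebraic \emph{map} need not. I would extract it from the complexification, in the spirit of Definition \ref{defi:introHC}. The statement is not stated in terms of a particular obstruction, so this choice is my own guess at how Benoist argues.

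\textbf{Step 1: the obstruction.} Let $Y\subset X$ be nonsingular Zariski closed of codimension $c$. After resolving singularities of its Zariski closure in $X_\C$, I may assume $Y$ is the real locus of a smooth complex subvariety $Y_\C$ of a suitable complexification $X_\C$. By Quillen's geometric description of $MU^\ast$, $Y_\C\hookrightarrow X_\C$ has a fundamental class $[Y_\C]\in MU^{2c}(X_\C)$. Restricting to $X$ and passing to $MO^\ast$ gives the class of $[j:Y\hookrightarrow X]$. In mod $2$ cohomology, the Landweber--Novikov operations then yield identities relating $j_\ast$ of the Stiefel--Whitney classes of the normal bundle $N_{Y/X}$ to Steenrod squares of $[Y]\in H^c(X;\Z/2)$. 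The key input is that these classes are pulled back from complex classes on $X_\C$, where Steenrod squares are constrained: degrees double, and odd squares act through integral lifts. This should give a relation that holds for all bordism classes of \emph{embeddings} of nonsingular algebraic subsets. Crucially, it is \emph{not} implied by algebraicity of the bordism class alone, since regular maps that are not embeddings carry no fundamental class in $MU^\ast(X_\C)$ of this shape.

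\textbf{Step 2: locating the failure.} I expect the relation to fail for a suitable topological embedding precisely when a specific characteristic number is nonzero, namely the top $s$-class, or the coefficient of a Milnor-type generator of $MO_\ast$ in degree $c$. That number is nonzero mod $2$ exactly when $c+1$ is a sum of two distinct powers of $2$. This is the classical criterion for the degree-$c$ generator of $MO_\ast$ to be realised by a Milnor hypersurface $H_{2^a,2^b}$ rather than by a real projective space or a decomposable class. I would carry out this computation using the Milnor hypersurfaces as explicit models.

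\textbf{Step 3: construction.} Take $X$ to be a product of a standard model of a sphere or torus of dimension $d$ with a real model of a Milnor hypersurface (or of a projective space) of dimension $c$. Then let $M$ be the graph-like embedding of a $d$-manifold whose normal bundle is twisted so that its Stiefel--Whitney data differ from those of an algebraic embedding by the nontrivial characteristic number of Step 2. The condition $d\geq c$ is needed so that such an embedded $M$ exists and carries the required normal bundle. Algebraicity of $[i:M\hookrightarrow X]$ I would check via the description of $MO^\alg_\ast(X)$ through $H^\alg_\ast(X;\Z/2)$ \cite{kucharzAlgebraicApproximationSubmanifolds2026}; for products of these standard models all homology should be algebraic. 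Finally, Step 1 shows that no nonsingular algebraic $Y$ of dimension $d$ has $(Y,j)$ bordant to $(M,i)$.

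\textbf{Main obstacle.} I expect Step 1 to be the hardest: formulating the correct relation and proving that it depends only on the bordism class of $(Y,j)$ and not on choices of $X_\C$. The independence of choices will need the orientability argument behind Definition \ref{defi:introHC}, applied to $MU$ in place of $H\Z$.
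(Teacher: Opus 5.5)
The paper gives no proof of this statement; it is quoted from Benoist. Your proposal therefore has to stand on its own, and it does not. The obstruction of Step~1, on which everything else rests, is never formulated, and the facts you invoke to motivate it are wrong.

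First, restricting the class of $\Rvar{Y}$ in $MU^{2c}(\Rvar{X}(\C))$ to $X$ and mapping to $MO$ does not give $[j:Y\hookrightarrow X]$; it lands in $MO^{2c}(X)$. By the proof of Observation~\ref{obs:ObstructionsToApprox}, the restriction is $\gamma^{MU}_Y=\PT^\ast\iota^\ast u^{MU}_{\nu_\C}$. The computation of Lemma~\ref{lem:orientability=>Square}, redone in $MO$, identifies its image with $[Y]_{MO}^2$, the square of the Poincar\'e dual of the bordism class.

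Second, it is false that regular maps which are not embeddings carry no such class. Quillen's geometric classes come from arbitrary proper complex-oriented maps. By Proposition~\ref{prop:resolutionOfComplexification}, every regular $f:Z\to X$ extends to a morphism $g:\Rvar{Z}'\to\Rvar{X}$ of nonsingular projective complexifications, so $g_!(1)$ exists in $MU^{2c}(\Rvar{X}(\C))$ and in $M\R^{c+c\tau}(\Rvar{X})$. This is the same Gysin construction as in Theorem~\ref{thm:morphismExists}. Moreover, taking geometric fixed points ($\Phi^{\Z/2}M\R\simeq MO$) of the restriction of $g_!(1)$ to $X$ gives exactly $[f]\in MO^{c}(X)$, just as for an embedding.

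So ``a complex class whose real points give the bordism class'' exists for every algebraic bordism class and cannot separate nonsingular subvarieties from regular maps. A genuine obstruction has to exploit the finer restriction of the complex class to $X$, and you never analyse this. For an embedding that restriction is the class $\gamma_Y$ of Section~\ref{sec:obstructionsApproximation}, built from the honest rank-$c$ bundle $\nu_\C$. For a map, $g^{-1}(X)$ may contain non-real points and need not be a clean intersection. The one identity you do state, $\mathrm{Sq}^k[Y]=j_\ast w_k(N_{Y/X})$, is Thom's formula and holds for every smooth embedding, so it carries no algebraic information.

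Two checks confirm that the hypotheses are not accounted for.
\begin{itemize}
\item \textbf{The role of $d\ge c$.} By Theorem~\ref{thm:BenoistApprox}, when $d<c$ every $M$ with algebraic bordism class is bordant to a nonsingular algebraic subset. Any valid obstruction must therefore be vacuous in that range. That, not the existence of embeddings with prescribed normal data, is why $d\ge c$ appears, and nothing in Step~1 explains why your relation would hold automatically for $d<c$.
\item \textbf{The arithmetic in Step~2.} A closed $c$-manifold with $s_c\neq 0$ exists if and only if $c+1$ is not a power of $2$; for even $c$ one can take $\R P^c$. For example, $c=6$ is excluded by the hypothesis ($7=111_2$), yet $s_6[\R P^6]\neq 0$. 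So ``$c+1$ has exactly two binary ones'' is not the condition your mechanism would detect.
\end{itemize}

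Finally, Step~3 specifies neither $M$ nor the ``twist'', and does not check algebraicity of $[i]$, so it cannot be verified. What is missing is the central idea: a concrete relation, proved from the complexification, that has three properties. It must be satisfied by bordism classes of nonsingular algebraic subsets of codimension $c$. It must hold automatically when $d<c$. It must fail for an explicit $M$ with algebraic bordism class.
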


Despite this negative result, until now it has been unknown whether algebraicity of the bordism class of the inclusion of a submanifold is sufficient for the existence of a \emph{weak} algebraic approximation outside the dimension range $2d<n$. In \cite{kucharzConjecturesContinuousRational2016} it was conjectured that it is always sufficient, irrespective of the dimension. It is worth mentioning that a slightly weaker positive result is known: if the bordism class of the inclusion $[M\hookrightarrow X]$ is algebraic, then the manifold $M\times \{0\}$ admits a weak algebraic approximation in $X\times \R$ \cite[Theorem 2.10.8]{akbulutTopologyRealAlgebraic1992}.

In Section \ref{sec:obstructionsApproximation} we show that the invariants introduced in the current paper are capable of obstructing the existence of a weak algebraic approximation of submanifolds. This way, we are able to disprove the conjecture from \cite{kucharzConjecturesContinuousRational2016}:
\begin{thm}\label{thm:newCounterex}
There exist a nonsingular compact real algebraic set $X$ of dimension $6$ and a smooth compact submanifold $M$ of $X$ of dimension $4$, such that the following conditions are satisfied:
\begin{enumerate}
    \item the bordism class of the inclusion $[i:M\hookrightarrow X]$ is zero, yet
    \item $M$ does not admit a weak algebraic approximation in $X$.
\end{enumerate}
\end{thm}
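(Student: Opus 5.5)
Take $X=\sph^2\times\sph^4$, with its standard nonsingular model in $\R^3\times\R^5$, and let $M$ be a smooth embedded copy of $\sph^4$ in $X$ chosen so that its Poincaré dual realises a class which the $K\R^\star_\C$-invariant forbids. We use the paper's functor $E^\star_\C$ for $E=K\R$ (or $KU$ if that suffices), together with the computation of $K\R^\star_\C(\sph^n)$ and of the product case from Section \ref{sec:invariantsProduct}.

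\textbf{Obstruction principle.} Let $Z\subset X$ be Zariski closed of dimension $d$ whose nonsingular locus is a compact manifold $N$. Via a resolution and the complexification $Z_\C\subset X_\C$, the pushforward (Gysin) map along the Real-oriented normal data should send the fundamental class of $N$ in $E$ to an element of the subgroup $E^\star_\C(X)\subset E^\star(X)$. Three steps are involved.
\begin{enumerate}
\item Resolve $Z_\C$ equivariantly to obtain $\tilde Z_\C\to X_\C$.
\item Take the Gysin image $\pi_!(1)\in E^{\star}(X_\C)$, which requires a Real orientation of the map. This comes from complex-orientedness, exactly as in the definition of $E^\star_\C$.
\item Restrict to $X$. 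One must check that the result equals the $E$-theoretic pushforward of $1$ along $N\hookrightarrow X$.
\end{enumerate}
The third step uses that $N$ is a union of components of the real locus of $\tilde Z$, and that noncentral points contribute nothing. The hard part is precisely this bookkeeping: the real locus of the resolution may contain extra components mapping to the singular locus, and one must show that their contribution is again of the form given by $E^\star_\C$, or that it vanishes. I would first try to get around this by noting that these extra components map into a set of dimension $<d$. Their pushforwards then factor through $E$-classes supported in codimension $>c$, and these lie in the "algebraic" part by induction on dimension, as in the proof that $H_\C$ is well defined.

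Given this principle, the argument runs as follows. A weak approximation $j(M)$ is isotopic to $M$, so its $E$-fundamental class $j_!(1)=i_!(1)\in \widetilde{K\R}^{\star}(X)$ would have to lie in $K\R^\star_\C(X)$. For $M$ we pick the graph-type submanifold representing a generator of the top summand $\widetilde{K}(\sph^{6})$ in the splitting of the Observation. Then $i_!(1)$ has a nonzero component in $\widetilde{KO}/\widetilde{KSp}(\sph^{6})$, or in the appropriate Real-graded degree. By the product computation (Theorem \ref{thm:introKOKSp}, using $\varphi(2,4)$ and its Real analogue, or Theorem \ref{thm:23mod4noMap}'s underlying invariant computation for $(2,4)$-type indices), this component is not in the image of $K\R_\C^\star$.

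It remains to choose $M$ so that its unoriented bordism class in $MO_4(X)$ is zero. Since $MO_4(X)$ is detected by Stiefel–Whitney numbers and mod $2$ homology, it suffices to take $M$ null-homologous mod $2$ with vanishing characteristic numbers. Concretely, I would take $M$ to be the preimage of a regular value of a map $X\to\sph^2$ of odd Hopf-type, or the boundary of a tubular-type 5-dimensional region whose $K$-theoretic Gysin class is $\eta$-torsion. Such a class is invisible to bordism but detected in $KO$, e.g. a class of order $2$ in $\widetilde{KO}(\sph^6)$-type degree, shifted. Matching the torsion class that is invisible mod $2$ with the class obstructed by $K\R_\C$ is the second delicate point, and I would settle it by direct computation with the Thom class of the normal bundle, which is trivial.
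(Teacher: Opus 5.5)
Your central obstruction principle is false. Restricting the class of a complexified subvariety $\Rvar{Z}(\C)\subset\Rvar{X}(\C)$ to $X$ does not give the Gysin image $i_!(1)$ of $N$. The intersection of $\Rvar{Z}(\C)$ with $X$ along $N$ is not transverse; its excess normal bundle is $i\nu\cong\nu$. What you get near $N$ is therefore the pushforward of the Euler class of $\nu$. This is the paper's $\gamma_M$, which in integral cohomology is $\pm\alpha_M^2$ (Lemma \ref{lem:orientability=>Square}), not $\alpha_M$.

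The degrees already disagree. $\pi_!(1)$ lies in $E^{2c}$, resp.\ $F^{c+c\tau}$, which is $KO^0$ on a trivial-involution space. By contrast, $i_!(1)$ would lie in $E^{c}$; in the Real case it needs a $KO$-orientation of $\nu$ and lands in $KO^{c}$.

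Here is a concrete counterexample. A point of $\sph^1$ is a nonsingular Zariski closed subset. Its Poincar\'e dual generates $H^1(\sph^1;\Z)$, and its $KU$-Gysin class generates $\widetilde{KU}^1(\sph^1)$. Yet $H^1_\C(\sph^1;\Z)=0$, since the complexification is a conic $\cong\C P^1$, and $\widetilde{KU}^1_\C(\sph^1)=0$ by Theorem \ref{thm:KUCSphere}.

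Second, for \emph{weak} approximation even the corrected class is not controlled on all of $X$, because of the real points of the resolution lying over $Z\setminus N$. Your proposed fix, that classes supported near a lower-dimensional set lie in $E_\C$ ``by induction'', has no basis. For instance, the nonzero classes of $\widetilde{KU}^\ast(\sph^3)$ are supported near a point, yet $\widetilde{KU}^\ast_\C(\sph^3)=0$. This is precisely the difficulty Proposition \ref{prop:generalObstructionWeak} circumvents. It re-embeds the resolution in $\Rvar{X}\times\R^{m+m\tau}$ so that it coincides with $\Rvar{Y}\times\{0\}$ near $M$. It then concludes only that some $\beta\in E^{2c}_\C(X)$ restricts on $M$ to $\delta_M$, the Euler class of $\nu_\C$.

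This also shows your choice of $M$ cannot work. Every option you suggest has a trivial summand in its normal bundle: a regular-value preimage, a boundary of a $5$-dimensional region, ``the Thom class of the normal bundle, which is trivial''. Then $\delta_M$ vanishes in every complex- or Real-oriented theory, because the zero section of a trivial bundle of positive rank is null-homotopic, also equivariantly. So $\beta=0$ satisfies the obstruction. The paper itself notes that its $H_\C$-based mechanism breaks down for trivial normal bundles.

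The $K$-theoretic input you cite gives nothing on $\sph^2\times\sph^4$ either. We have $\varphi(2,4)=\varphi(2,8)=1$, and Theorem \ref{thm:23mod4noMap} does not cover $(2,4)$.

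What the proof needs is a variety on which $E^{2c}_\C$ is small, together with a submanifold whose normal bundle has nonzero Euler class. The paper uses Lemma \ref{lem:constructionDouble} to get $X\cong\C P^3\sqcup\C P^3$ with $H^4_\C(X)\subset\Z(u_1^2+u_2^2)$. It takes $M$ to be a quadric in one copy and a quartic in the other, so that $\alpha_M=2u_1+4u_2$. The relation $\beta\alpha_M=\alpha_M^3$ of Proposition \ref{prop:cubeNoWeakApprox} would then force $n=4$ and $n=16$ simultaneously. The bordism class vanishes by the Conner--Floyd criterion, since even-degree hypersurfaces have $w(\nu)=0$ and vanish in $H_4(\C P^3;\Z/2)$.
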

Furthermore, this example for the first time shows that it is not enough to know the bordism class of the inclusion of a submanifold to determine whether it admits an algebraic approximation:
\begin{cor}
There exists a nonsingular compact real algebraic set $X$, and two smooth compact submanifolds $M,N\subset X$ of the same dimension satisfying the following properties:
\begin{enumerate}
    \item the inclusions $i:M\hookrightarrow X$ and $j:N\hookrightarrow X$ are bordant to each other,
    \item $M$ does not admit an algebraic approximation in $X$, but
    \item $N$ \emph{does} admit an algebraic approximation in $X$.
\end{enumerate}
\begin{proof}
Let $X$ and $M\subset X$ be as in Theorem \ref{thm:newCounterex}. It remains to find any nonsingular algebraic subset $N\subset X$ of codimension $c=\codim M$, with the property that the bordism class of the inclusion $[j:N\hookrightarrow X]$ is zero. This can be done as follows. Choose any regular map $f:X\rightarrow \R^c$ whose image has nonempty interior in $\R^c$. Applying Sard's theorem, without loss of generality we may assume that $0\in \R^c$ lies in the image of $f$ and is a regular value of $f$, and that $0\in \R^{c-1}$ is a regular value of the composition $\pi \circ f$, where $\pi:\R^c\rightarrow \R^{c-1}$ is the projection onto the first $c-1$ coordinates. Set $N:=f^{-1}(0)$. Then $N$ is a nonsingular algebraic subset of $X$. Moreover, the bordism class of its inclusion is zero, as it is the boundary of the smooth submanifold $C:=f^{-1}\{(0,\dots,0,t):t\geq 0\}$ of $X$.
\end{proof}
\end{cor}
\begin{rem}
The obstruction to weak algebraic approximation behind the construction of the example from Theorem \ref{thm:newCounterex}, which is in fact based on the classical functor $H^{\ast}_{\C}(X;\Z)$, fails to obstruct the existence of a weak algebraic approximation of a submanifold $M$ if the normal bundle to $M$ in $X$ is trivial. This leaves \cite[Conjecture B(p)]{kucharzConjecturesContinuousRational2016} open.

However, as explained before, the new stronger invariants introduced in this paper can be used in the role of such obstructions instead. In theory, this could lead to a disproof of \cite[Conjecture B(p)]{kucharzConjecturesContinuousRational2016}.
\end{rem}
\section{Preliminaries}\label{sec:preliminaries}
The paper is intended to be available both to real algebraic geometers and algebraic topologists. For this reason, we include two preliminary Subsections \ref{sec:preliminaries:real} and \ref{sec:preliminaries:cohomology}. In the former we introduce terminology and notation from real algebraic geometry. In the latter we provide a brief introduction to equivariant algebraic topology. 
\subsection{Preliminaries on real algebraic geometry}\label{sec:preliminaries:real}
There are essentially two interconnected approaches to real algebraic geometry. On the one hand, for many authors, real algebraic geometry is the study of real algebraic sets, that is, subsets of the affine space $\R^n$ defined by real polynomial equations. On the other hand, one can study the larger object consisting of all complex solutions of a set of polynomial equations with real coefficients. The paper will make use of both of these approaches, so it is essential that in this section we set up notation for both of these types of objects. This section is based on \cite[Section 2.4]{mangolteRealAlgebraicVarieties2020}, although our notation sometimes diverges from the one in \cite{mangolteRealAlgebraicVarieties2020}.

We begin by formally stating the following definition from the introduction:
\begin{defi}
A \emph{real algebraic set} $V$ is a subset of the space $\R^n$ for some $n\geq 0$ given as the zero set of a family of polynomials indexed by some index set $I$:
\begin{equation*}
    V=\{x\in \R^n:\forall_{i\in I} \;p_i(x)=0\},
\end{equation*}
where 
\begin{equation*}
    \{p_i\}_{i\in I} \subset \R[x_1,\dots,x_n].
\end{equation*}

The family of subsets of $V$ consisting of complements of algebraic sets contained in $V$ comprises a topology, called the Zariski topology.
\end{defi}
As explained in the introduction, the standard definition of a morphism in this setting is the following one:
\begin{defi}
Let $V\subset \R^n$ be a real algebraic set, and let $U\subset V$ be a Zariski open subset of $V$. A function $f:U\rightarrow \R$ is called \emph{regular} if there are polynomials $P,Q\in \R[x_1,\dots,x_n]$ such that $Q$ is different from zero at all points of $U$ and 
\begin{equation*}
    Q(x)f(x)=P(x)
\end{equation*}
holds for $x\in U$.

Given another algebraic set $W\subset \R^m$, a map $F:U\rightarrow W$ is said to be regular, if all its coordinate functions $F_1,\dots,F_m$ are regular.
\end{defi}

We can now introduce the core object of our study; a \emph{real affine variety} is essentially a real algebraic set without a preferred embedding in the affine space:
\begin{defi}\label{defi:realAffineVariety}
A real affine variety is a topological space $\var X$ endowed with a sheaf $\mathcal R_{\var X}$ of real-valued functions on $\var X$, isomorphic to a real algebraic set endowed with the Zariski topology and the sheaf of regular functions. The topology on $\var X$ is by analogy called the Zariski topology on $\var X$, and local sections of $\RR_{\var X}$ are called regular functions. By the dimension of $\var X$ we mean its dimension as a Noetherian topological space with the Zariski topology.
\end{defi}
Given a Zariski open subset $\var U$ of a real affine variety $\var X$, the ring $\RR_{\var X}(\var U)$ is usually denoted by $\RR(\var U)$ for short.
\begin{defi}
A regular map between two real affine varieties $\var X$ and $\var Y$ is a map $F:\var X\rightarrow \var Y$ continuous in the Zariski topology, with the property that for every Zariski open set $U\subset \var Y$ and every regular function $\varphi\in \RR(U)$, the composition $\varphi\circ F\vert_{F^{-1}(U)}$ is regular.
\end{defi}
It is verified easily that if $V\subset \R^n,W\subset \R^m$ are real algebraic sets, then a map $F:V\rightarrow W$ is regular as a map of real algebraic sets, if and only if it is regular as a map of real affine varieties.

Every real affine variety carries also the finer Euclidean topology, which coincides with the standard topology when $\var X=V\subset \R^n$ is a real algebraic set. The variety $\var X$ is said to be \emph{complete}, if it is compact in the Euclidean topology.

An example of a complete real affine variety which is most relevant to us is the unit sphere in $\R^{n+1}$, given by the single equation $\sum_i x_i^2=1$. We denote it by $\sph^n$.

The class of real affine varieties may at first seem too small to perform standard constructions from algebraic geometry, and it would seem natural to extend this notion and define quasi-projective varieties. Unlike in the complex case, this is unnecessary, as the following holds true:
\begin{prop}[{\cite[Theorem 3.4.4 and Proposition 3.2.10]{bochnakRealAlgebraicGeometry1998}}]\text{} 
\begin{enumerate}
    \item The real projective space $\R P^n$, endowed with the topology and the sheaf of real-valued functions obtained by gluing the local structures of real affine varieties on the affine charts is a real \emph{affine} variety in the sense as before. 
    \item If $\var X$ is a real affine variety and $U\subset \var X$ is a Zariski open set, then the pair $(U,\RR_{\var X}\vert_U)$ is a real \emph{affine} variety as well.
\end{enumerate}
Thus, every real quasi-projective variety is automatically affine.
\end{prop}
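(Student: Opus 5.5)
The statement has two parts. In both, the task is to exhibit an explicit affine structure, since quasi-projectivity is already built into the definitions.

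For part (1), I would embed $\R P^n$ into the space of symmetric $(n+1)\times(n+1)$ matrices by sending a line $[x]$ to the orthogonal projection onto it:
\begin{equation*}
    [x_0:\dots:x_n]\mapsto \left(\frac{x_ix_j}{\sum_k x_k^2}\right)_{i,j}.
\end{equation*}
The image is exactly $\Gr_1(\R^{n+1})=\{P:P^\intercal=P,\ P^2=P,\ \tr P=1\}$, which is a real algebraic set. The map is well defined, and on each chart $\{x_l\neq 0\}$ its entries are regular functions of the affine coordinates $x_i/x_l$. The key point is that the denominator $\sum_k (x_k/x_l)^2$ never vanishes on $\R^n$; this is where the real field is essential.

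For the inverse, on the Zariski open set $\{P_{ll}\neq 0\}$ I would send $P$ to the point with affine coordinates $P_{il}/P_{ll}$. These sets cover the image, because $\tr P=1$ forces some diagonal entry to be nonzero. One then checks that these local inverses glue and are regular. Consequently the glued sheaf on $\R P^n$ is isomorphic to the sheaf of regular functions on the algebraic set $\Gr_1(\R^{n+1})$, so $\R P^n$ is a real affine variety.

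For part (2), I may assume $\var X=V\subset\R^n$. The complement $V\setminus U$ is the zero set of finitely many polynomials $q_1,\dots,q_r$, by Noetherianity. Put $q=\sum_i q_i^2$. Then $U=\{x\in V: q(x)\neq 0\}$, so a single equation describes the complement; this is the sum-of-squares trick. Now consider the graph-type set
\begin{equation*}
    W=\{(x,t)\in\R^{n+1}: x\in V,\ t\,q(x)=1\}.
\end{equation*}
Projection $W\to U$ is regular, and its inverse $x\mapsto (x,1/q(x))$ is regular on $U$. It remains to check that regular functions on Zariski open subsets correspond under this bijection. The forward direction holds because $t$ is polynomial on $W$. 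The reverse direction holds because clearing powers of $t$ by multiplying with $q^N$ turns a quotient of polynomials in $(x,t)$ with nonvanishing denominator into a quotient of polynomials in $x$ with a denominator nonvanishing on the corresponding open set.

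The final remark then follows. A quasi-projective real variety is a Zariski open subset of a Zariski closed subset of some $\R P^n$. By (1) such a closed subset is an algebraic subset of $\Gr_1(\R^{n+1})$, hence affine, and by (2) its open subsets are affine.

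The main obstacle I expect is the sheaf-level comparison in (1): verifying that the sheaf obtained by gluing charts coincides with the regular-function sheaf of the embedded set, and not merely that the map is a homeomorphism. I would handle this by showing that the embedding and its chartwise inverses are regular in both directions. Regular maps with regular inverses identify the sheaves, since regularity is local.
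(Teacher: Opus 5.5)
Your proposal is correct and follows the standard argument of the results the paper cites from Bochnak--Coste--Roy; the paper gives no proof of its own. In (1), $\R P^n$ is identified biregularly with the algebraic set of symmetric idempotent trace-one matrices via $[x]\mapsto (x_ix_j/\sum_k x_k^2)$. In (2), a Zariski open set $U=V\setminus Z(q)$, with $q$ a sum of squares, is identified with the algebraic set $\{(x,t): x\in V,\ t\,q(x)=1\}$ via the graph of $1/q$.

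One small slip: in (2), the direction ``regular on $U$ gives regular on $W$'' holds simply because $\pi$ is the restriction of a polynomial projection, not because $t$ is polynomial on $W$. This does not affect the argument.
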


The notion of nonsingularity of a real affine variety is defined as follows:
\begin{defi}
Let $\var X$ be a real affine variety and let $x\in \var X$ be its point. We say that $\var X$ is nonsingular at $x$, if the stalk of $\RR_{\var X}$ at $x$ is a regular local ring of dimension equal to the dimension of $\var X$. We say that $\var X$ is nonsingular if it is nonsingular at each of its points. 
\end{defi}
\begin{rem}
Equivalently, $\var X$ is nonsingular in this sense if and only if its irreducible components are nonsingular, pairwise disjoint, and all have the same dimension.
\end{rem}
A real algebraic set which is nonsingular as a real affine variety is an analytic submanifold of $\R^n$. Unlike in the complex case, however, the converse is not true in general.

Let us now start introducing a different point of view, which accounts for complex points of real algebraic sets. We take a classical approach to complex algebraic geometry, which identifies an algebraic variety with its space of closed points. We also restrict ourselves to the class of quasi-projective varieties. Therefore, a quasi-projective complex variety for us is what is left of a (not necessarily irreducible) Zariski locally closed subset of the projective space after forgetting the particular choice of the embedding. A formal definition could be given following the lines of Definition \ref{defi:realAffineVariety}, but there is no point in repeating the details. A quasi-projective complex variety is said to be nonsingular if its irreducible components are smooth, pairwise disjoint, and all have the same dimension.

\begin{defi}
A quasi-projective $\R$-variety $\Rvar X$ is a pair $(\Rvar X(\C),\sigma)$, where $\Rvar X(\C)$ is a complex quasi-projective variety $\Rvar X(\C)$ and $\sigma:\Rvar X(\C)\rightarrow \Rvar X(\C)$ is an involution continuous in the Zariski topology, which is anti-regular, in the sense that for every open set $U\subset \Rvar X(\C)$ and every regular function $\varphi\in \Rvar O_{\Rvar X(\C)}(U)$, the function $\bar{\varphi}\circ \sigma$ is regular on $\sigma^{-1}(U)$, where the bar denotes complex conjugation.

A morphism (or a regular map) between two quasi-projective $\R$-varieties $\Rvar X$ and $\Rvar Y$ is a regular map between the underlying complex varieties, which is equivariant with respect to the $\mathbb Z/2$-actions induced by the involutions.

A quasi-projective $\R$-variety is said to be nonsingular, if the underlying complex quasi-projective variety is nonsingular. Similarly, it is said to be projective (resp. affine) if the underlying complex variety is projective (resp. affine).
\end{defi}
We will denote $\R$-varieties using calligraphic letters $\Rvar X,\Rvar Y,\dots$, not to confuse them with real affine varieties denoted using standard italic letters $\var X,\var Y,\dots$.

The following nontrivial result could be considered as an alternative definition of a quasi-projective  $\R$-variety:
\begin{prop}[{\cite[Theorem 2.1.33]{mangolteRealAlgebraicVarieties2020}}]\label{prop:R-varietyEmbedding}
Let $\Rvar X$ be a quasi-projective $\R$-variety. Then, there exists a Zariski locally closed set $V\subset \C P^n$ for some $n\geq 0$, which is invariant under the complex conjugation map $\sigma:\C P^n\rightarrow \C P^n$, and such that $(V,\sigma\vert_V)$ is isomorphic to $\Rvar X$.
\end{prop}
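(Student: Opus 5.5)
The natural route is to embed $\Rvar X(\C)$ in some projective space and then make the embedding compatible with $\sigma$ by a Weil restriction type construction. By the definition of a quasi-projective complex variety, we may fix a locally closed embedding $\iota:\Rvar X(\C)\hookrightarrow \C P^N$. The map $\bar\iota:=c\circ\iota\circ\sigma$, where $c$ is coordinatewise complex conjugation on $\C P^N$, is again a regular locally closed embedding. Regularity holds because $\sigma$ is anti-regular: the pullbacks $\overline{\varphi}\circ\sigma$ of the homogeneous coordinate ratios are regular. Injectivity and the immersion property are inherited from $\iota$, since $\sigma$ is a bijection which is a homeomorphism in the Zariski topology and transports local rings to conjugate local rings.

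Next I would combine the two embeddings into a single one via the Segre embedding. Define
\begin{equation*}
\Phi:=s\circ(\iota,\bar\iota):\Rvar X(\C)\to \C P^N\times\C P^N\hookrightarrow \C P^{(N+1)^2-1},
\end{equation*}
where $s$ is the Segre map. The graph-type map $(\iota,\bar\iota)$ is a locally closed embedding, because its composition with the first projection already is one. Now equip $\C P^{(N+1)^2-1}$ with homogeneous coordinates $z_{ij}$ and the antiholomorphic involution $\tau(z_{ij})=\overline{z_{ji}}$. One checks directly that $\Phi\circ\sigma=\tau\circ\Phi$: we have $\iota(\sigma x)=c(\bar\iota(x))$ and $\bar\iota(\sigma x)=c(\iota(x))$, using $\sigma^2=\mathrm{id}$, so swapping the two factors and conjugating exactly exchanges $\Phi(x)$ and $\Phi(\sigma x)$.

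The involution $\tau$ is the real structure of a Hermitian-matrix form, and a linear change of coordinates conjugates it to the standard conjugation. Concretely, take the coordinates $u_{ii}=z_{ii}$, $u_{ij}=z_{ij}+z_{ji}$ and $u_{ji}=i(z_{ij}-z_{ji})$ for $i<j$. These are $\tau$-real, in the sense that $u\circ\tau=\overline{u}$. In the coordinates $u$, the map $\tau$ becomes the standard conjugation $\sigma$ on $\C P^{(N+1)^2-1}$. Let $V$ be the image of $\Phi$ in these coordinates. It is locally closed, it is invariant under $\sigma$, and $\Phi$ intertwines the involutions, so $(V,\sigma|_V)\cong\Rvar X$ as $\R$-varieties.

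The main obstacle is the first step, namely verifying carefully that $\bar\iota$ is a regular embedding with a locally closed image. Local closedness is preserved, since $c$ maps Zariski locally closed sets to Zariski locally closed sets and $\sigma$ is a homeomorphism. For the isomorphism onto the image, I would use the anti-regularity of both $\sigma$ and $c$, so that the inverse of $\bar\iota$ on its image is likewise a composition of two anti-regular maps and is therefore regular.
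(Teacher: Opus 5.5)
Your proof is correct. The paper gives no proof of its own here; the statement is quoted from Mangolte's book, so there is no internal argument to compare yours against. What you have written is a complete, self-contained version of the standard descent argument for quasi-projective varieties. In line-bundle terms, you embed $\Rvar X(\C)$ by the product of $\iota^\ast\mathcal O(1)$ with its conjugate $\bar\iota^\ast\mathcal O(1)$, a very ample bundle preserved by the real structure. You then use that the fixed locus of $\tau(z)=\bar z^{\intercal}$ on $(N+1)\times(N+1)$ matrices is the space of Hermitian matrices. This is a real form, so $\tau$ is linearly conjugate to the standard conjugation.

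The key computations check out. With $\iota(x)=[a]$ and $\bar\iota(x)=[b]$, one has $\iota(\sigma x)=[\bar b]$ and $\bar\iota(\sigma x)=[\bar a]$. Hence $\Phi(\sigma x)=[\bar b_i\bar a_j]=\tau(\Phi(x))$. Your coordinates $u$ are invertible, with $z_{ij}=\tfrac12(u_{ij}-\sqrt{-1}\,u_{ji})$ and $z_{ji}=\tfrac12(u_{ij}+\sqrt{-1}\,u_{ji})$ for $i<j$.

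The one step you assert rather than justify is that $(\iota,\bar\iota)$ is a locally closed embedding. This is the usual graph argument. Its image is the graph of $\bar\iota\circ\iota^{-1}$, which is closed in $\iota(\Rvar X(\C))\times\C P^N$ because $\C P^N$ is separated. The first projection followed by $\iota^{-1}$ gives the inverse.
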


Given a quasi-projective $\R$-variety $\Rvar X$, we denote by $\Rvar X(\C)$ the underlying complex quasi-projective variety. It is a consequence of Proposition \ref{prop:R-varietyEmbedding}, that the set of fixed points of the involution carries a natural structure of a real affine variety, which we denote by $\Rvar X(\R)$. 

Given an $\R$-variety $\Rvar X$ and a Zariski open subset $U\subset \Rvar X$ invariant under $\sigma$, we denote by $\mathcal O_{\Rvar X}(U)$ the ring of morphisms of $\R$-varieties $f: U\rightarrow \C$, where $\C$ is endowed with the complex conjugation. In other words, $\mathcal O_{\Rvar X}(U)$ consists of regular maps $f:\Rvar U(\C)\rightarrow \C$ which additionally satisfy $f\circ \sigma =\bar f$.

It is rather obvious that every real affine variety $\var X$ arises as the set of real points $\Rvar X(\R)$ of some quasi-projective $\R$-variety $\Rvar X$. Hironaka's resolution of singularities theorem implies the following more interesting fact:
\begin{thm}[{\cite[Theorem 2.3.7]{mangolteRealAlgebraicVarieties2020}}]
Let $\var X$ be a nonsingular complete real affine variety. Then, there exists a nonsingular projective $\R$-variety $\Rvar X$, with the property that $\Rvar X(\R)$ is Zariski dense in $\Rvar X(\C)$ and isomorphic to $\var X$.

Such a pair $(\Rvar X,\varphi)$, where $\varphi:\var X\rightarrow \Rvar X(\R)$ is a biregular isomorphism, is called a \emph{nonsingular projective complexification of $\var X$}.
\end{thm}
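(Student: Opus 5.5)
The plan is the classical two-step construction: first compactify, then resolve singularities equivariantly.

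\emph{Step 1: a projective $\R$-variety with real locus $X$.}
Since $X$ is affine, fix an embedding $X\subset \R^n$ as a real algebraic set. Let $I\subset \R[x_1,\dots,x_n]$ be its ideal of polynomials vanishing on $X$. Let $\Rvar Y_0$ be the Zariski closure in $\C P^n$ of the complex zero set $V_\C(I)\subset \C^n\subset \C P^n$. The set $V_\C(I)$ is defined by real polynomials, so $\Rvar Y_0$ is invariant under conjugation and is a projective $\R$-variety.

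Its real points form a real algebraic subset of $\R P^n$ that contains $X$. They may also contain extra real points at infinity. Since $X$ is compact, we can compose the embedding with a projective transformation (or stereographic inversion) of $\R P^n$ that sends $X$ into an affine chart missing a real hyperplane $H$ disjoint from $X$. We then replace $\R^n$ by that chart. The remaining extra real points at infinity lie in $H$; we handle them in Step 2, or earlier by an inversion that makes $X$ itself closed in $\R P^n$.

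Because $I$ is the full real ideal of $X$, the set $X$ is Zariski dense in $V_\C(I)$. Hence $\Rvar Y_0(\R)\supset X$ and $X$ is Zariski dense in $\Rvar Y_0(\C)$. We discard every irreducible component of $\Rvar Y_0$ that is not the closure of real points; by construction of $I$ there are none.

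\emph{Step 2: equivariant resolution.}
Apply Hironaka's resolution of singularities over the base field $\R$, in its functorial/canonical form. It yields a proper birational morphism $\pi:\Rvar X\to \Rvar Y_0$ of projective $\R$-varieties with the following properties:
\begin{itemize}
\item $\Rvar X$ is nonsingular;
\item $\pi$ is an isomorphism over the nonsingular locus of $\Rvar Y_0$;
\item $\pi$ is compatible with the involution, since resolution of an $\R$-scheme commutes with base change to $\C$.
\end{itemize}

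The key point is that $X$ lies in the nonsingular locus of $\Rvar Y_0$. A point of $X$ is nonsingular as a real variety when the local ring of $\RR_X$ is regular of dimension $\dim X$. Because $I$ is real (its real points determine it), that local ring is the localization of $\R[x]/I$. Its regularity is equivalent to regularity of the corresponding local ring of $\Rvar Y_0$ at that point. Since $\dim_\R X=\dim_\C \Rvar Y_0$, it follows that $\Rvar Y_0(\C)$ is smooth of the correct dimension along $X$.

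We also blow up the extra real points at infinity, i.e.\ the real locus inside $H$. This is done by centers disjoint from $X$. Such points are isolated from $X$ in the Euclidean topology, so after blowing up centers supported there and then resolving, we can arrange that they contribute no real points. This uses the standard trick of blowing up to make the real locus over $H$ empty, or an argument via a sum-of-squares equation cutting out $X$ exactly.

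Then $\pi$ is an isomorphism over a neighborhood of $X$. Thus $\pi^{-1}(X)\cong X$ biregularly, and $\Rvar X(\R)\supseteq \pi^{-1}(X)$. Zariski density of $\Rvar X(\R)$ in $\Rvar X(\C)$ follows from density of $X$ and birationality.

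\emph{Main obstacle.}
The delicate step is ensuring that $\Rvar X(\R)$ is \emph{exactly} $X$. We must avoid spurious real points, both at infinity and in exceptional divisors over singular real points of $\Rvar Y_0(\R)\setminus X$.

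To handle this, I would first replace $X$ by a presentation as the zero set of one polynomial $f=\sum p_i^2$ of degree $2d$. I then homogenize $f$ with an added term $x_0^{2d}$-type positivity, so that the projective closure has no real points at infinity. This gives a compact affine model. Finally, I would use the fact that the real points of $\Rvar X$ not in $\pi^{-1}(X)$ lie over the real singular locus away from $X$, which I expect to be empty by this construction.
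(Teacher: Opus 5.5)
You have the standard skeleton: take the projective closure $\Rvar Y_0$ of $V_\C(I)$, resolve canonically over $\R$, and note that $\pi$ is an isomorphism near $X$ because $X$ lies in the regular locus; density then follows by birationality. This agrees with the construction the paper sketches in the proof of Proposition~\ref{prop:KCalgInKC}. Write $d=\dim X$.

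\textbf{The gap.} It is the step you yourself flag: you never prove $\Rvar X(\R)=\pi^{-1}(X)$, and the mechanisms you offer do not work.
\begin{itemize}
\item Spurious real points at infinity really occur for compact $X$. For $I=(x^2y^2+x^2+y^2-1)$, the closure in $\C P^2$ (coordinates $(x:y:z)$, $z=0$ at infinity) contains the real points $(1:0:0)$ and $(0:1:0)$.
\item A projective change of chart cannot remove them, because $\Rvar Y_0(\R)$ does not depend on the chart. Also, $X$ is compact, so it is already closed in $\R P^n$.
\item Blowing up is not a device for killing real points. The exceptional divisor over a real centre is defined over $\R$ and in general has real points; over a smooth real point its real locus is $\R P^{d-1}$.
\item The sum-of-squares idea fails. $V_\C(\sum p_i^2)$ is a hypersurface, singular along all of $X$, and $X$ is not Zariski dense in it, so it is not a complexification of $X$. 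An added power of $x_0$ vanishes identically on $\{x_0=0\}$, so it cannot affect the points at infinity.
\item Even used only as a certificate, an element of $I$ whose leading form has no nontrivial real zero need not exist. In the example above, every element of $I$ has leading form divisible by $x^2y^2$.
\end{itemize}

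\textbf{The missing idea.} Nothing extra has to be done: once $\Rvar X$ is nonsingular, the spurious real points disappear automatically.
\begin{enumerate}
\item Since $\Rvar Y_0(\R)\cap\C^n=V_\R(I)=X$, and $\pi$ is equivariant and an isomorphism near $X$, we get $\pi^{-1}(X)=\Rvar X(\R)\cap\pi^{-1}(\C^n)$. This set is open in $\Rvar X(\R)$.
\item It is also compact, hence closed. So $S:=\Rvar X(\R)\setminus\pi^{-1}(X)$ is open in $\Rvar X(\R)$.
\item $S$ lies in the real locus of $\pi^{-1}(\Rvar Y_0\setminus\C^n)$. This is a Zariski closed subset of complex dimension $<d$: it contains no irreducible component of $\Rvar X$, since $\pi$ is birational and $\Rvar Y_0\cap\C^n$ is dense.
\item On the other hand, near any real point of the nonsingular $d$-dimensional variety $\Rvar X$, the implicit function theorem makes the real locus a $d$-dimensional manifold. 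A nonempty open subset of a $d$-manifold cannot lie in a semialgebraic set of dimension $<d$.
\item Hence $S=\emptyset$.
\end{enumerate}
This is the noncentral-points argument the paper invokes (via Akbulut--King) in the proof of Proposition~\ref{prop:KCalgInKC}. It is exactly where compactness of $X$ enters.
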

We often omit the isomorphism $\varphi$ in the notation and identify $\var X$ with $\Rvar X(\R)$. Such a complexification is by no means unique; if $\var X$ is of dimension at least two then blowing up a conjugate pair of complex points of $\Rvar X$ gives rise to a different complexification of $\var X$. The main goal of this paper is to remedy this by introducing a family of invariants of real affine varieties, which are defined in terms of their complexifications, but which do not depend on the particular choices of them.

The following proposition relating regular maps between real affine varieties and regular maps between their complexifications is crucial for a construction we will perform:
\begin{prop}\label{prop:resolutionOfComplexification}
Let $\var X$ and $\var Y$ be two nonsingular complete real affine varieties. Let $\Rvar X$ and $\Rvar Y$ be some nonsingular projective complexifications of $\var X$ and $\var Y$ respectively. Let $f:\var X\rightarrow \var Y$ be a regular map. Then, there exists a nonsingular projective complexification $\Rvar X'$ of $\var X$ and two regular maps of $\R$-varieties $\pi:\Rvar X'\rightarrow \Rvar X$ and $g:\Rvar X'\rightarrow \Rvar Y$ such that the diagram
\begin{center}
\begin{tikzcd}
\Rvar X & \Rvar X' \arrow[l, "\pi"'] \arrow[r, "g"]                  & \Rvar Y                     \\
  & X \arrow[r, "f"] \arrow[lu, hook] \arrow[u, hook] & Y \arrow[u, hook]
\end{tikzcd}
\end{center}
is commutative. Moreover, there is a Zariski open neighbourhood $U$ of $\var X$ in $\Rvar X$ invariant under the involution such that $\pi\vert_{\pi^{-1}(U)}$ is an isomorphism onto $U$.
\begin{proof}
The map $f$, by definition being a quotient of polynomials, extends to a regular map $h:U\rightarrow \Rvar Y$, where $U$ is a Zariski open neighbourhood of $\var X$ in $\Rvar X$ invariant under involution. Using the theorem on resolution of indeterminacy of rational maps we can apply a sequence of equivariant blowing-ups on $\Rvar X$, whose centres do not intersect $U$, to obtain a nonsingular projective $\R$-variety $\Rvar X'$ together with a morphism $\pi:\Rvar X'\rightarrow \Rvar X$, with the property that $\pi\vert_{\pi^{-1}(U)}$ is an isomorphism onto $U$ and that $h \circ \pi\vert_{\pi^{-1}(U)}$ extends to a regular map $g:\Rvar X'\rightarrow \Rvar Y$. The triple $(\Rvar X',\pi,g)$ satisfies the desired properties.
\end{proof}
\end{prop}

\subsection{Preliminaries on equivariant algebraic topology}\label{sec:preliminaries:cohomology}
In this section we review the theory of equivariant and Real-oriented spectra. A general classical reference on equivariant topology is the book \cite{mayEquivariantHomotopyCohomology1996}. The current introduction focuses on more specific topics, which are explained more thoroughly in \cite[Section 2]{huRealorientedHomotopyTheory2001}. Another useful reference using more classical terminology is \cite{arakiOrientationsTcohomologyTheories1979}.

A \emph{Real space} is a pair $\Rvar T=(\Rvar T(\C),\sigma)$, where $T(\C)$ is a Hausdorff topological space and $\sigma$ is a continuous involution acting on $\Rvar T$. It is a standard practice to capitalize the word \say{Real} to avoid confusion, especially once we introduce \emph{Real vector bundles}. A prime example of a Real space is the underlying topological space $\Rvar X(\C)$ of an $\R$-variety $\Rvar X$ with the endowed involution. A subset of a Real space $\Rvar T$, which is invariant under $\sigma$ naturally carries an induced structure of a Real space. A morphism between two Real spaces is a continuous map equivariant with respect to the $\Z/2$-action of the involutions. A based Real space is a Real space with a chosen base point lying in the fixed point locus of the involution (of course, in general it may happen that the locus is empty). Given a Real space $\Rvar T$, by $\Rvar T_+$ we denote the disjoint union of $\Rvar T$ and a base point. Many standard topological constructions, most importantly including the smash product, can be naturally carried over in the category of based Real spaces.

Given two nonnegative integers $p$ and $q$, we denote by $\R^{p+q\tau}$ the Real space with underlying topological space $\R^{p+q}$, and the involution given by 
\begin{equation*}
    \sigma(x_1,\dots,x_{p+q})=(x_1,\dots,x_p,-x_{p+1},\dots, -x_{p+q}).
\end{equation*}
We denote by $B^{p+q\tau}$ the closed unit ball centred at the origin (in the Euclidean metric) in $\R^{p+q\tau}$ with the induced Real structure, and by $\sph^{p+q\tau}$ the based Real space arising as the topological quotient of $B^{p+q\tau}$ by its boundary. Alternatively, $\sph^{p+q\tau}$ is isomorphic to the unit sphere in $\R^{p+1+q\tau}$. Whenever $q=0$, we may write $\sph^p$ instead of $\sph^{p+0\tau}$.

We shall remark here that there are different conventions concerning the index $p+q\tau$. Some authors concerned with the study of $\R$-varieties (see for example \cite{dossantosBigradedEquivariantCohomology2009}) adopt the motivic notation, whose index $(p+q,q)$ corresponds to our $p+q\tau$. In Atiyah's notation from \cite{atiyahTheoryReality1966}, the sphere $\sph^{p,q}$ corresponds to our $\sph^{q-1+p\tau}$. We choose to stick with notation parallel to the one from \cite{huRealorientedHomotopyTheory2001}, which is more common in algebraic topology.

The $(p+q\tau)$-th suspension of a based Real space $\Rvar T$ by definition is its smash product with the sphere $\sph^{p+q\tau }$:
\begin{equation*}
    \Sigma^{p+q\tau} \Rvar T:=\Rvar T\wedge \sph^{p+q\tau}.
\end{equation*}
One can provide a similar definition for the loop space:
\begin{equation*}
    \Omega^{p+q\tau} \Rvar T:=F(\sph^{p+q\tau},\Rvar T),
\end{equation*}
where the right hand side denotes the space of based equivariant maps from the sphere to $\Rvar T$ with suitable topology and involution.

In equivariant topology, one can develop the theory of spectra and generalised cohomology theories similarly as it is done in the classical case. There are different ways of doing that, but the one which interests us concerns \emph{$\Z/2$-equivariant cohomology theories indexed over the complete universe}. We call such cohomology theories \emph{equivariant cohomology theories} for short. Roughly speaking such an equivariant cohomology theory is a collection of functors from the category of based Real spaces to the category of abelian groups numbered by all symbols of the form $p+q\tau$ for $p,q\in \Z$, subject to certain conditions. The conditions are analogous to the usual Eilenberg–Steenrod axioms for a reduced generalised cohomology theory, except for the fact that the suspension axiom is replaced by the following strengthening: there are natural isomorphisms
\begin{equation*}
    \widetilde E^{\star+r+s\tau}(\Sigma^{r+s\tau} \Rvar T)\cong \widetilde E^{\star}(\Rvar T)
\end{equation*}
for all $r,s\in \Z$. 
\begin{rem}
Throughout the paper we use the following convention from \cite{huRealorientedHomotopyTheory2001}: when an index ranges through all pairs of the form $p+q\tau$ it is denoted by $\star$. When the index ranges through $\Z$, the standard symbol $\ast$ is used instead. Thus, if $\Rvar T$ is a Real space and $\widetilde E^\star$ is an equivariant cohomology theory, then $\widetilde E^\star(\Rvar T)$ is the doubly graded group
\begin{equation*}
    \widetilde E^\star(\Rvar T):=\bigoplus_{p,q\in \Z}\widetilde E^{p+q\tau}(\Rvar T),
\end{equation*}
while $\widetilde E^\ast(\Rvar T)$ is the $\Z$-graded subgroup 
\begin{equation*}
    \widetilde E^\ast(\Rvar T):=\bigoplus_{p\in \Z}\widetilde E^{p+0\tau}(\Rvar T)    
\end{equation*}
of $\widetilde E^\star(\Rvar T)$.
\end{rem}
We will not be giving a more precise definition of an equivariant cohomology theory, which in a broader context can be found in \cite[Chapter XIII]{mayEquivariantHomotopyCohomology1996}. In the equivariant setting, every equivariant cohomology theory is representable by a (suitably defined) equivariant spectrum. As usual, from a reduced equivariant cohomology theory one defines a nonreduced one by taking $E^\star(\Rvar T):=\widetilde{E}^\star(\Rvar T_+)$ for a Real space $X$ and $E^\star(\Rvar T,\Rvar S):=\widetilde{E}^\star(\cofib(\Rvar S\hookrightarrow \Rvar T))$ for a pair of Real spaces $(\Rvar T,\Rvar S)$. For such a pair, there arises a family of long exact sequences, one for every $q\in \Z$:
\begin{equation*}
    \dots \rightarrow F^{p+q\tau}(\Rvar T,\Rvar S)\rightarrow F^{p+q\tau}(\Rvar T)\rightarrow F^{p+q\tau}(\Rvar S)\rightarrow F^{p+1+q\tau}(\Rvar T, \Rvar S)\rightarrow \dots
\end{equation*}

What interests us are equivariant cohomology theories with an additional feature called a \emph{Real orientation}. To introduce it, let us first recall the classical notion of a \emph{complex-oriented cohomology theory}.

\begin{defi}
A complex orientation on a (nonequivariant) multiplicative cohomology theory $E^\ast$ is a choice of a class $c_1^E\in \widetilde E^2(\C P^\infty)$, which maps to the unit $1\in E^0(\pt)$ under the composition
\begin{equation*}
    \widetilde E^2(\C P^\infty)\xrightarrow{i^\ast} \widetilde E^2(\sph^2)\cong\widetilde E^0(\sph^0)\cong E^0(\pt),
\end{equation*}
where $i:\sph^2\cong \C P^1\hookrightarrow \C P^\infty$ is the inclusion, and $\widetilde E^2(\sph^2)$ is identified with $\widetilde E^0(\sph^0)$ through the suspension isomorphism. The theory $E^\ast$ is called \emph{complex-orientable}, if it admits some complex orientation. It is called \emph{complex-oriented} after one fixes a particular complex orientation on $E^\ast$. 
\end{defi}
The most important example of a complex-oriented cohomology theory is complex cobordism $MU^\ast$. It satisfies the following universal property: there is a one-to-one correspondence between complex orientations on a cohomology theory $E^\ast$ and morphisms of ring spectra $MU \rightarrow E$. 

Recall, that a real vector bundle $\xi$ of dimension $d$ over a base space $B$ is $E$-\emph{oriented}, if there is given a class $u\in E^d(\xi,\xi\backslash B)$ whose restriction to every fibre is a generator of the free $E^0(\pt)$-module 
\begin{equation*}
    E^d(\R^d,\R^d\backslash\{0\})\cong \widetilde E^d(\sph^d)\cong E^0(\pt),
\end{equation*}
where $B$ is identified with the zero section of $\xi$. The class $u$ is called a \emph{Thom class} of $\xi$. Of course, in general a given vector bundle may be oriented in multiple ways.

The essential property of complex-oriented cohomology theories which interests us is that in a complex-oriented cohomology theory $E$ every vector bundle with complex structure over a compact base is $E$-oriented; in fact the Thom class is naturally given as the pullback of the universal Thom class of the tautological bundle over $B\mathrm{U}(n)$. From this, it follows that every compact complex manifold $M$ is $E$-oriented (see for example \cite[Part III, Section 10]{adamsStableHomotopyGeneralised1974}). This gives a fundamental class $[M]\in E^{2d}(M)$ (where $d$ is the complex dimension of $M$). The cap product with $[M]$ induces the Poincar\'e duality isomorphism:
\begin{equation*}
    E^\ast(M)\xrightarrow{\frown [M]}E_{2d-\ast} (M).
\end{equation*}
We refer the reader to \cite{adamsStableHomotopyGeneralised1974} for a broader introduction on complex-oriented cohomology theories. 

Let us now go back to the equivariant case. To introduce the analogous definition of a \emph{Real-oriented equivariant cohomology theory}, we first need to consider an equivariant version of a complex structure on a vector bundle:
\begin{defi}
Given a Real space $\Rvar T$, a \emph{Real vector bundle} over $\Rvar T$ is a complex vector bundle $p:F\rightarrow  \Rvar T$ endowed with an involution of the total space $\sigma_F:F\rightarrow F$, which satisfies
\begin{equation*}
    p\circ \sigma_F=\sigma \circ p,
\end{equation*}
and such that when restricted to the fibre $p^{-1}(x)$ of a point $x\in \Rvar T$, it is an anti-linear isomorphism onto $p^{-1}(\sigma(x))$. Morphisms of Real vector bundles are defined in the obvious manner.
\end{defi}
The classifying space of Real line bundles is the infinite complex projective space $\C P^\infty$ with involution induced by the complex conjugation on $\C^{\infty}$. From now on, whenever we consider the finite $\C P^n$ or infinite $\C P^\infty$ projective spaces as Real spaces, they are assumed to be endowed with this involution. 

With these notions in mind we can define Real orientations:
\begin{defi}
A \emph{Real orientation} on an equivariant multiplicative cohomology theory $E^\star$ is a choice of a class $c_1^E\in \widetilde E^{1+\tau}(\C P^\infty)$, which maps to the unit $1\in E^0(\pt)$ under the composition
\begin{equation*}
    \widetilde E^{1+\tau}(\C P^\infty)\xrightarrow{i^\ast} \widetilde E^{1+\tau}(\sph^{1+\tau})\cong\widetilde E^0(\sph^0)\cong E^0(\pt),
\end{equation*}
where $i:\sph^{1+\tau}\cong \C P^1\hookrightarrow \C P^\infty$ is the inclusion, and $\widetilde E^{1+\tau}(\sph^{1+\tau})$ is identified with $\widetilde E^0(\sph^0)$ through the suspension isomorphism. The theory $E^\star$ is called \emph{Real-orientable} if it admits some Real orientation. It is called \emph{Real-oriented} after one fixes a particular Real orientation on $E^\star$. 
\end{defi}
An example of a Real-oriented cohomology theory is the \emph{Real cobordism} $M\R^\star$ of Landweber \cite{landweberConjugationsComplexManifolds1968a}. It is universal in a similar sense as before: Real orientations on an equivariant cohomology theory $E^\star$ correspond to morphisms of equivariant ring spectra $M\R\rightarrow E$. 

As in the complex-oriented case, in the Real-oriented case every Real vector bundle of (complex) dimension $d$ over a compact space carries a natural Thom class in degree $d+d\tau$, given as the pullback of the universal Thom class of the tautological bundle over $B\mathrm U(n)$ with suitable involution \cite[Theorem 2.25]{huRealorientedHomotopyTheory2001}. 

Let $\Rvar X$ be a nonsingular projective $\R$-variety of (complex) dimension $d$. Locally near each of its real points $\Rvar X$ is diffeomorphic to the open unit ball in $\R^{d+d\tau}$, hence $\Rvar X$ is an $\R^{d+d\tau}$-manifold in the sense of \cite[Chapter X, Definition 2.5]{mayEquivariantHomotopyCohomology1996}. As the tangent bundle to $\Rvar X$ is a Real vector bundle, it follows that $\Rvar X$ is oriented in the sense of \cite[Chapter XVI, Definition 9.3]{mayEquivariantHomotopyCohomology1996}. This means that it carries a fundamental class $[\Rvar X]\in E_{d+d\tau}(\Rvar X)$, the cap product with which induces the Poincar\'e duality isomorphism
\begin{equation*}
    E^\star(\Rvar X)\xrightarrow{\frown [\Rvar X]} E_{d+d\tau-\star}(\Rvar X).
\end{equation*}

The Real-oriented cohomology theory which we make most use of is Real K-theory of Atiyah \cite{atiyahTheoryReality1966}, which we introduce now. Let $\Rvar T$ be a Real space. Isomorphism classes of Real vector bundles over $\Rvar T$ form a semigroup under the Whitney sum. Similarly as it is done in nonequivariant K-theory, by assigning a formal inverse to each of its elements we obtain a group denoted by $K\R^0(\Rvar T)$. This turns out to be the zeroth cohomology group in a certain Real-oriented equivariant cohomology theory, which we denote by $K\R^\star$ \cite[Theorem 2.8]{huRealorientedHomotopyTheory2001}. This theory is doubly periodic with periods $1+\tau$ and $8$. That is, $K\R^{p+q\tau}(\Rvar T)$ is naturally isomorphic with $K\R^{p+1+(q+1)\tau}(\Rvar T)$ and with $K\R^{p+8+q\tau}(\Rvar T)$.

If $\Rvar T$ is endowed with the trivial involution, the isomorphism class of a Real bundle over $\Rvar T$ is determined by its fixed-point real subbundle. Therefore, in this case one has the natural isomorphism $K\R^{p+q\tau}(\Rvar T)\cong KO^{p-q}(\Rvar T)$ for $p,q\in \Z$. 
\subsection{Coefficient rings in K-theory}\label{sec:preliminaries:coefficients}
Since we will perform explicit calculations in K-theory, we need to recall how the coefficient rings $KU^\ast(\pt),KO^\ast(\pt),K\R^\star(\pt)$ and the coefficient group $KSp^\ast(\pt)$ look like, and fix notation for their generators. All the results of this subsection can be found in \cite[Chapter 1]{strickland1992bott} or \cite{atiyahTheoryReality1966}.
\begin{prop}\label{prop:coefficientRings}
In the case of unitary K-theory, one has
\begin{equation*}
    KU^\ast(\pt)=\Z[\beta,\beta^{-1}],
\end{equation*}
where $|\beta|=-2$. In the case of orthogonal K-theory, the coefficient ring has the more complicated structure
\begin{equation*}
    KO^\ast(\pt)=\Z[\eta,\alpha,\lambda,\lambda^{-1}]/(2\eta,\eta^3,\alpha\eta,\alpha^2-4\lambda),
\end{equation*}
where $|\eta|=-1,|\alpha|=-4,|\lambda|=-8$. The doubly-graded ring $K\R^\star(\pt)$ can be described as 
\begin{equation*}
    K\R^\star(\pt)=KO^\ast(\pt)[\sigma,\sigma^{-1}]=\Z[\eta,\alpha,\lambda,\lambda^{-1},\sigma,\sigma^{-1}]/(2\eta,\eta^3,\alpha\eta,\alpha^2-4\lambda),    
\end{equation*}
where $|\eta|=-1,|\alpha|=-4,|\lambda|=-8,|\sigma|=-1-\tau$.
\end{prop}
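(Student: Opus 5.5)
The plan is to assemble the three descriptions from Bott periodicity and Atiyah's Real periodicity theorem, checking that the products behave as claimed. Throughout, coefficient groups are identified with reduced groups of spheres: $KU^{-n}(\pt)\cong\widetilde{KU}^0(\sph^n)$ and $KO^{-n}(\pt)\cong\widetilde{KO}^0(\sph^n)$. The main obstacle is the multiplicative structure of $KO^\ast(\pt)$, namely $\eta^2\neq 0$, $\eta^3=0$, $\alpha\eta=0$ and $\alpha^2=4\lambda$. My plan is to settle the relations by a group-order check plus complexification, and the nonvanishing of $\eta^2$ by citation.

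\textbf{Unitary case.} I would take $\beta\in\widetilde{KU}^0(\sph^2)=KU^{-2}(\pt)$ to be the class $[H]-1$ of the reduced tautological line bundle on $\C P^1$. Bott periodicity says that multiplication by $\beta$ is an isomorphism $KU^{\ast}(\pt)\to KU^{\ast-2}(\pt)$. Combined with $KU^0(\pt)=\Z$ and $KU^{-1}(\pt)=\widetilde{KU}^0(\sph^1)=0$, this gives $KU^\ast(\pt)=\Z[\beta,\beta^{-1}]$.

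\textbf{Orthogonal case: generators.} I would start from Bott's computation of the groups $KO^{-n}(\pt)$, which are $\Z,\Z/2,\Z/2,0,\Z,0,0,0$ for $n=0,\dots,7$ and then repeat with period $8$. The generators are chosen as follows.
\begin{itemize}
\item $\eta$ is the reduced M\"obius (Hopf) line bundle on $\R P^1=\sph^1$.
\item $\alpha$ is the reduced quaternionic Hopf bundle on $\Hq P^1=\sph^4$, regarded as a real bundle.
\item $\lambda$ is the Bott class generating $\widetilde{KO}^0(\sph^8)$; multiplication by $\lambda$ realises the $8$-periodicity.
\end{itemize}

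\textbf{Orthogonal case: relations.} Several relations follow because the target group is small:
\begin{itemize}
\item $2\eta=0$ because $KO^{-1}(\pt)=\Z/2$.
\item $\eta^3=0$ because $KO^{-3}(\pt)=0$.
\item $\alpha\eta=0$ because $KO^{-5}(\pt)=0$.
\end{itemize}
For $\alpha^2=4\lambda$ I would use the complexification map $c:KO^{-4k}(\pt)\to KU^{-4k}(\pt)$. This map is injective in these degrees, with $c(\alpha)=2\beta^2$ and $c(\lambda)=\beta^4$. Hence $c(\alpha^2)=4\beta^4=c(4\lambda)$, and injectivity gives $\alpha^2=4\lambda$. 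The remaining fact, that $\eta^2$ generates $KO^{-2}(\pt)$, I would take from Atiyah--Bott--Shapiro via Clifford modules, as in \cite{strickland1992bott}.

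\textbf{Real case.} Since the point carries the trivial involution, the isomorphism recalled at the end of Section~\ref{sec:preliminaries:cohomology} gives $K\R^{p+q\tau}(\pt)\cong KO^{p-q}(\pt)$. I would check that this isomorphism is multiplicative. Let $\sigma\in K\R^{-1-\tau}(\pt)=\widetilde{K\R}(\sph^{1+\tau})$ be the Real Bott class, given by the reduced tautological bundle on $\C P^1$ with conjugation. Atiyah's $(1+\tau)$-periodicity theorem \cite{atiyahTheoryReality1966} says precisely that multiplication by $\sigma$ is an isomorphism. Therefore $\sigma$ is invertible, and every element is uniquely a product of a $KO$-class in degree $p+0\tau$ with a power of $\sigma$. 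This yields the Laurent extension $K\R^\star(\pt)=KO^\ast(\pt)[\sigma,\sigma^{-1}]$.
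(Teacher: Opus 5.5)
Your argument is correct and follows essentially the paper's route. The paper gives no proof beyond pointing to \cite[Chapter 1]{strickland1992bott} and \cite{atiyahTheoryReality1966}, and your sketch (Bott and $(1+\tau)$-periodicity, vanishing of $KO^{-1}$, $KO^{-3}$, $KO^{-5}$ for the torsion relations, and injectivity of complexification in degree $-8$ with $c(\alpha)=\pm 2\beta^2$, $c(\lambda)=\beta^4$ for $\alpha^2=4\lambda$) is the standard derivation found there; the facts you take from the literature, that $\eta^2$ generates $KO^{-2}(\pt)$ and that $c(\lambda)=\beta^4$, are what those references supply, and the last step (matching the presented ring degree by degree with Bott's groups) is routine but should be stated.
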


There is a natural transformation $r:KU^\ast\rightarrow K\R^\ast$ between the functors
\begin{equation*}
    \Rvar T\mapsto KU^\ast(\Rvar T(\C)),\quad \Rvar T\mapsto K\R^\ast(\Rvar T)
\end{equation*}
from the category of Real spaces $\Rvar T=(\Rvar T(\C),\sigma)$ to $\Z$-graded groups. On the level of vector bundles it can be described as follows: to a complex bundle $F$ over $\Rvar T(\C)$ it associates the bundle $E:=F\oplus \sigma^\ast\bar F$, with the anti-linear involution $\sigma_E$ permuting the two summands \cite[p. 371]{atiyahTheoryReality1966}. When restricted to the subcategory of spaces with trivial involution, this is the usual transformation $r:KU^\ast\rightarrow KO^\ast$, induced by forgetting the complex structure on a vector bundle.

\begin{prop}\label{prop:realificationImage}
The induced map of rings
\begin{equation*}
    KU^\ast(\pt)\rightarrow KO^\ast(\pt)
\end{equation*}
is \emph{not} a ring homomorphism; it is only a homomorphism of graded groups, which can be described as follows:
\begin{equation*}
    r(\beta^n)=\begin{cases}
        2\lambda^{k} & \text{if }n=4k,\\
        \eta^2\lambda^{k} & \text{if }n=4k+1,\\
        \alpha\lambda^{k} & \text{if }n=4k+2,\\
        0 & \text{if }n=4k+3.
    \end{cases}
\end{equation*}

In particular, the image of this map is the ideal of $KO^\ast(\pt)$ generated by $2,\eta^2,\alpha$.
\end{prop}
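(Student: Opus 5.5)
The plan is to reduce everything to the complexification map $c:KO^\ast(\pt)\rightarrow KU^\ast(\pt)$, which is a ring homomorphism, and to two standard identities relating $c$ and $r$:
\begin{equation*}
    c\circ r=1+\psi^{-1},\qquad r(c(x)\cdot y)=x\cdot r(y).
\end{equation*}
The first says that complexifying the underlying real bundle of a complex bundle $F$ gives $F\oplus\bar F$. The second is the projection formula, i.e.\ $r$ is $KO^\ast$-linear. On coefficients, $\psi^{-1}(\beta^n)=(-1)^n\beta^n$, because $\beta$ comes from the reduced Hopf bundle over $\sph^2$ and conjugation acts on $\widetilde{KU}^0(\sph^2)$ by $-1$. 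We also use the standard values $c(\lambda)=\beta^4$, $c(\alpha)=2\beta^2$ and $c(\eta)=0$.

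\emph{Periodicity reduction.} Since $c(\lambda^k)=\beta^{4k}$, the projection formula gives
\begin{equation*}
    r(\beta^{4k+j})=r(c(\lambda^k)\beta^j)=\lambda^k r(\beta^j).
\end{equation*}
So it suffices to treat $n=0,1,2,3$. Negative $k$ causes no trouble, because $\lambda$ is invertible. The failure of multiplicativity is then immediate, since $r(1)=2\neq 1$.

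\emph{The four cases.}
\begin{enumerate}
    \item For $n=0$ and $n=2$, the group $KO^{-2n}(\pt)$ is $\Z$, generated by $1$ and by $\alpha$ respectively, and $c$ is injective on it. We have $c\,r(\beta^n)=(1+(-1)^n)\beta^n=2\beta^n$, while $c(2)=2$ and $c(\alpha)=2\beta^2$. Injectivity then yields $r(1)=2$ and $r(\beta^2)=\alpha$.
    \item For $n=3$, the target $KO^{-6}(\pt)$ is $0$, so $r(\beta^3)=0$.
    \item For $n=1$, the target $KO^{-2}(\pt)=\widetilde{KO}^0(\sph^2)$ is $\Z/2$, generated by $\eta^2$.
\end{enumerate}

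\emph{The main obstacle: $n=1$.} Here $c$ kills the target, so complexification gives no information. Instead I argue geometrically: $r(\beta)$ is represented by the realification of $H-1$, where $H$ is the Hopf line bundle on $\sph^2$. The underlying oriented real $2$-plane bundle of $H$ has Euler class a generator of $H^2(\sph^2;\Z)$, so $w_2\neq 0$. Hence it is stably nontrivial, and $r(\beta)$ is the nonzero element $\eta^2$. One must check that this matches the paper's normalisation of $\eta$; it does, since $\eta^2$ is by definition the unique nonzero element of $KO^{-2}(\pt)$.

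\emph{The image.} Finally, the image is the additive span of the elements $2\lambda^k$, $\eta^2\lambda^k$ and $\alpha\lambda^k$. This span is an ideal: multiplying the generators by $\eta,\alpha,\lambda^{\pm1}$ and using the relations $2\eta=\eta^3=\alpha\eta=0$ and $\alpha^2=4\lambda$ stays inside it. For instance $2\cdot\eta=0$, $\alpha\cdot\alpha=2\cdot(2\lambda)$ and $\eta\cdot\alpha=0$. It is also visibly generated by $2$, $\eta^2$ and $\alpha$.
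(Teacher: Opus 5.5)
Your proof is correct. It differs from the paper only in that the paper gives no argument for this proposition: it is stated among the preliminaries as a standard fact, with the reader referred to \cite[Chapter 1]{strickland1992bott} and \cite{atiyahTheoryReality1966}.

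Your derivation runs as follows. The identities $c\circ r=1+\psi^{-1}$ and $r(c(x)y)=x\,r(y)$, together with $c(\lambda)=\beta^4$ and $c(\alpha)=2\beta^2$, reduce everything to $n=0,1,2,3$. The cases $n=0,2$ follow from injectivity of $c$ on $KO^{0}(\pt)$ and $KO^{-4}(\pt)$, and $n=3$ holds for degree reasons. That leaves $n=1$ as the only case needing a separate input, which you supply correctly through $w_2$ of the realified Hopf bundle. Exactness of Wood's sequence $\Sigma KO\xrightarrow{\eta}KO\xrightarrow{c}KU$ would give the same conclusion. Your check that the span of $2\lambda^k,\eta^2\lambda^k,\alpha\lambda^k$ is closed under multiplication by $\eta,\alpha,\lambda^{\pm1}$ is complete.

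What your route gains over the bare citation is clarity about what is actually used. The signs in the statement presuppose the normalisation $c(\lambda)=\beta^4$ and $c(\alpha)=2\beta^2$. The paper never fixes this, since it only names $\alpha$ and $\lambda$ as generators, and your argument makes that dependence explicit. However, those inputs are themselves part of the Bott periodicity results in the cited references. So your argument organises the reliance on the literature rather than removing it.
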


Unlike $KO$ and $KU$, quaternionic K-theory $KSp$ is not a multiplicative cohomology theory, since due to noncommutativity of the quaternions one cannot tensor two $\Hq$-vector bundles to get a $\Hq$-bundle. However, since the tensor product $\otimes_\R:\R\times  \Hq\rightarrow \Hq$ is defined, there is a map of spectra $KO\wedge KSp\rightarrow KSp$ turning $KSp$ into a $KO$-module. The coefficient group of $KSp$ can then be described as follows:
\begin{prop}\label{prop:KOtoKSp}
There is an element $\theta\in KSp^{-4}(\pt)$ (coming from the tautological $\Hq$-vector bundle over $\Hq P^1\cong \sph^4$), with the property that multiplication by $\theta$ induces a natural isomorphism of functors
\begin{equation*}
    KO^\ast(-)\cong KSp^{\ast-4}(-).
\end{equation*}
In particular, we have that $KSp^\ast(\pt)$ is the free $KO^\ast(\pt)$-module with one generator $\theta$, with $|\theta|=-4$.
\end{prop}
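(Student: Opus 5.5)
The plan is to promote multiplication by $\theta$ to a map of spectra and show it is an equivalence. The $KO$-module structure $KO\wedge KSp\rightarrow KSp$ gives, for any class $\theta\in KSp^{-4}(\pt)=\pi_4(KSp)$, a map of $KO$-module spectra
\begin{equation*}
    \mu_\theta:\Sigma^{4}KO\rightarrow KSp,\qquad x\mapsto x\cdot\theta .
\end{equation*}
It induces a natural transformation $KO^\ast(-)\rightarrow KSp^{\ast-4}(-)$ of cohomology theories. By the usual comparison argument (Mayer--Vietoris and induction on cells for finite CW complexes, then Milnor's $\lim^1$ sequence in general), such a transformation is a natural isomorphism as soon as it is an isomorphism on coefficients. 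Equivalently, by Whitehead's theorem, $\mu_\theta$ is an equivalence once it induces isomorphisms on homotopy groups. So it suffices to show that
\begin{equation*}
    KO^{\ast}(\pt)\xrightarrow{\cdot\theta}KSp^{\ast-4}(\pt)
\end{equation*}
is bijective. The final sentence of the statement is then immediate.

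I take $\theta$ to be the reduced class $[\gamma]-[\Hq]$ of the tautological quaternionic line bundle over $\Hq P^1\cong\sph^4$. It lies in $\widetilde{KSp}^0(\sph^4)\cong KSp^{-4}(\pt)$. Rather than comparing homotopy groups degree by degree against tables, I would build an inverse. There is a pairing $KSp\wedge KSp\rightarrow KO$, obtained by tensoring a right $\Hq$-bundle with a left one over $\Hq$, using the conjugation to turn left modules into right ones. The pairing is compatible with the $KO$-module structures, so associativity of the tensor products lets us compare composites of multiplications. Under this pairing, $\theta\cdot\theta\in KO^{-8}(\pt)$.

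The key claim is that $\theta\cdot\theta=\pm\lambda$, the Bott generator. Granting it, both composites
\begin{equation*}
    KO^\ast\xrightarrow{\cdot\theta}KSp^{\ast-4}\xrightarrow{\cdot\theta}KO^{\ast-8}\qquad\text{and}\qquad KSp^{\ast}\xrightarrow{\cdot\theta}KO^{\ast-4}\xrightarrow{\cdot\theta}KSp^{\ast-8}
\end{equation*}
are multiplication by $\pm\lambda$, which is invertible. Hence multiplication by $\theta$ is both injective and surjective.

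The main obstacle is the identity $\theta^2=\pm\lambda$, i.e.\ that the external product $\theta\boxtimes\theta$ over $\sph^4\wedge\sph^4=\sph^8$ generates $\widetilde{KO}^0(\sph^8)\cong\Z$. I would first try to detect it after complexification. The complexified bundles are the $SU(2)$-bundles with $c_2=\pm1$ on each $\sph^4$ factor, so $c(\theta\boxtimes\theta)$ equals $\pm$ the product of two generators of $\widetilde{KU}^0(\sph^4)$, namely $\pm\beta^4$. On $\widetilde{KO}^0(\sph^8)\cong\Z\rightarrow\widetilde{KU}^0(\sph^8)\cong\Z$, complexification sends $\lambda$ to $\beta^4$, and $r\circ c=2$. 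One then has to check that the quaternionic-to-real tensor construction produces $\pm\lambda$ and not a multiple. I would check this by computing $c$ of the real tensor bundle, which is the complex tensor bundle itself. If this bookkeeping becomes delicate, the fallback is to quote the standard equivalence $\Sigma^4KO\simeq KSp$ from Bott periodicity for the symplectic groups, as in \cite{strickland1992bott}, and identify its generator with the tautological class.
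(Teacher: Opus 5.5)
Your reduction to coefficients is fine. So is the complexification argument for $\theta\cdot\theta=\pm\lambda$:
\begin{itemize}
\item complexifying the pairing gives the tensor product of the underlying complex bundles;
\item the underlying complex bundle of $\theta$ has $c_2=\pm1$, so it generates $\widetilde{KU}^0(\sph^4)$;
\item $c(\lambda)=\beta^4$ follows from $r(\beta^4)=2\lambda$, $r\circ c=2$ and $KO^{-8}(\pt)\cong\Z$, so $c$ is injective on $\widetilde{KO}^0(\sph^8)$.
\end{itemize}
Write $\langle a,b\rangle$ for the pairing $KSp\wedge KSp\to KO$. The first composite is also fine, because $\langle x\theta,\theta\rangle=x\langle\theta,\theta\rangle$ is a genuine rebracketing, $(V\otimes_\R A)^{\mathrm{op}}\otimes_\Hq B\cong V\otimes_\R(A^{\mathrm{op}}\otimes_\Hq B)$.

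The gap is the second composite. For $a\in KSp^\ast$, identifying $\langle a,\theta\rangle\cdot\theta$ with $\pm\lambda a$ means comparing two bundles. One is $(A^{\mathrm{op}}\otimes_\Hq\Theta)\otimes_\R\Theta$, whose quaternionic structure comes from the last factor. The other is $A\otimes_\R(\Theta^{\mathrm{op}}\otimes_\Hq\Theta)$, whose quaternionic structure comes from the first factor. No associativity of tensor products relates these. Without that identity you only know that $\cdot\theta\colon KO^\ast(\pt)\to KSp^{\ast-4}(\pt)$ is split injective, with $KSp^{\ast-4}(\pt)\cong KO^\ast(\pt)\oplus\ker\langle-,\theta\rangle$. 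Surjectivity, which is half of the statement, therefore remains open. The paper does not prove this proposition; it quotes it from \cite{strickland1992bott,atiyahTheoryReality1966}, as in your fallback. Afterwards it only ever uses the first composite, to deduce Proposition~\ref{prop:KSptoKO}.

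The missing identity is true and can be supplied. Since you have already reduced to coefficients, a bundle isomorphism $(A^{\mathrm{op}}\otimes_\Hq B)\otimes_\R C\cong A\otimes_\R(B^{\mathrm{op}}\otimes_\Hq C)$ for quaternionic bundles $A,B,C$ over a common base suffices; no spectrum-level coherence is needed. To get it, reduce the structure groups to $Sp(a)\times Sp(b)\times Sp(c)$. Both sides are then associated to quaternionic representations whose underlying complex representation is $\C^{2a}\otimes\C^{2b}\otimes\C^{2c}$, which is irreducible of quaternionic type. Such a representation carries a unique quaternionic structure up to isomorphism: by Schur's lemma two such structures differ by a unit complex scalar, which can be conjugated away. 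Equivalently, this is a Morita/Skolem--Noether statement for $\Hq\otimes_\R\Hq^{\mathrm{op}}\cong M_4(\R)$.

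Alternatively, split injectivity together with Bott's abstract computation of the groups $KSp^\ast(\pt)$ also finishes, since a finitely generated abelian group $G$ with $G\cong G\oplus K$ has $K=0$. That input, however, is essentially the periodicity theorem the paper cites.
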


There is another product $KSp\wedge KSp\rightarrow KO$. On the level of vector bundles it can be described as follows: since $\Hq$ is isomorphic with $\Hq^{\mathrm{op}}$ via the conjugation, to (left) $\Hq$-vector bundles $E$ and $F$ one can associate the $\R$-bundle
\begin{equation*}
    E^{\mathrm{op}} \otimes_\Hq F,
\end{equation*}
where $E^{\mathrm{op}}$ is the right $\Hq$-bundle induced by the isomorphism $\Hq\cong \Hq^{\mathrm{op}}$. The following holds true:
\begin{prop}\label{prop:KSptoKO}
One has $\theta^2=\lambda \in KO^{-8}(\pt)$. Since $\lambda$ is invertible, it follows that multiplication by $\theta$ induces also the natural isomorphism
\begin{equation*}
    KSp^\ast(-)\cong KO^{\ast-4}(-).
\end{equation*}
\end{prop}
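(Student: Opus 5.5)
The statement consists of two claims: $\theta^2=\lambda$ in $KO^{-8}(\pt)$, and that multiplication by $\theta$ (through the pairing $KSp\wedge KSp\rightarrow KO$) gives a natural isomorphism $KSp^\ast(-)\cong KO^{\ast-4}(-)$. I would first reduce the isomorphism to the identity $\theta^2=\lambda$. The pairing $KSp\wedge KSp\rightarrow KO$ is compatible with the $KO$-module structure on $KSp$; I would check this on bundles via the associativity isomorphism $(V\otimes_\R E)^{\mathrm{op}}\otimes_\Hq F\cong V\otimes_\R(E^{\mathrm{op}}\otimes_\Hq F)$. With this in hand, consider the composite
\begin{equation*}
    KO^\ast(-)\xrightarrow{\cdot\theta} KSp^{\ast-4}(-)\xrightarrow{\theta\cdot} KO^{\ast-8}(-).
\end{equation*}
It is multiplication by $\theta^2\in KO^{-8}(\pt)$. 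If $\theta^2=\lambda$, this composite is Bott periodicity, hence an isomorphism. The first map is an isomorphism by Proposition \ref{prop:KOtoKSp}, so the second map $\theta\cdot:KSp^{\ast}(-)\rightarrow KO^{\ast-4}(-)$ must be an isomorphism as well, which is the claim. Naturality is automatic, since both maps are products with a fixed coefficient class.

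The key step is therefore the computation $\theta^2=\lambda$. By Proposition \ref{prop:KOtoKSp}, $KSp^{-8}(\pt)$ is free of rank one over $\Z$ with generator $\alpha\theta$, and $KO^{-8}(\pt)\cong\Z$ is generated by $\lambda$. So $\theta^2=c\lambda$ for some integer $c$, and I only need $c=\pm1$; the sign can then be absorbed by the choice of generator $\lambda$.

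To pin down $c$, I would complexify and work in $KU$, where $KU^{-8}(\pt)\cong\Z$ has generator $\beta^4$. Write $c'$ for the map $KSp\rightarrow KU$ forgetting to the underlying complex bundle. The quaternionic line bundle over $\Hq P^1$ has underlying complex bundle the rank-two bundle over $\sph^4$ with $c_2=\pm1$, so $c'(\theta)=\pm\beta^2$. The pairing $E^{\mathrm{op}}\otimes_\Hq F$ complexifies to $E\otimes_\C F$, because $\Hq\otimes_\R\C\cong M_2(\C)$ and Morita equivalence identifies $(E^{\mathrm{op}}\otimes_\Hq F)\otimes_\R\C$ with $E\otimes_\C F$. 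Hence $c(\theta^2)=\beta^4$. On the other hand, complexification $KO^{-8}(\pt)\rightarrow KU^{-8}(\pt)$ sends $\lambda\mapsto\beta^4$; this is the standard fact that the generator of $\widetilde{KO}(\sph^8)$ complexifies to a generator, consistent with Proposition \ref{prop:realificationImage} via $rc=2$. Comparing the two gives $c=1$.

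The main obstacle is verifying that the $KSp\wedge KSp\rightarrow KO$ product complexifies to the $KU$ product. This needs the Morita identification to be made carefully and naturally on bundles, and it is also where the sign convention for $\lambda$ enters.
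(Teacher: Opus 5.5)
\emph{Comparison with the paper.} Your argument is correct, but it is not the paper's route. The paper gives no proof of this proposition: like everything else in Section~\ref{sec:preliminaries:coefficients}, it is quoted from Strickland's notes and Atiyah's paper on Real K-theory, where it belongs to the standard computation of the coefficient rings.

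You instead derive it from Proposition~\ref{prop:KOtoKSp} using two ingredients:
\begin{enumerate}
\item The two-out-of-three argument for the composite $x\mapsto\theta\cdot(x\theta)=x\theta^2$.
\item Detection of $KO^{-8}(\pt)\cong\Z$ by complexification.
\end{enumerate}
What this buys is an internal check that the sign is $+1$ with exactly the paper's normalisations. From $rc=2$ and $r(\beta^4)=2\lambda$ you get $c(\lambda)=\beta^4$, while $c(\theta^2)=c'(\theta)^2=\beta^4$ whatever the sign of $c'(\theta)$. A bare citation leaves that compatibility implicit.

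Some small remarks on your write-up:
\begin{itemize}
\item Absorbing a sign into $\lambda$ is unnecessary, and in fact not available, since $\lambda$ is already pinned down by $\alpha^2=4\lambda$.
\item The mention of $KSp^{-8}(\pt)$ is irrelevant, because $\theta^2$ lives in $KO^{-8}(\pt)$.
\item Your composite multiplies by $\theta$ from the left. So what you need is $KO$-linearity of the pairing in the \emph{second} slot, $E^{\mathrm{op}}\otimes_\Hq(V\otimes_\R F)\cong V\otimes_\R(E^{\mathrm{op}}\otimes_\Hq F)$. This is as easy as the first-slot version you wrote. No graded signs arise, since $|\theta|=-4$.
\end{itemize}

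\emph{The step you flag.} With $\C\subset\Hq$ acting on the left and the idempotent $\tfrac12(1-i\otimes\sqrt{-1})\in\Hq\otimes_\R\C\cong M_2(\C)$, the Morita computation actually gives
\begin{equation*}
(E^{\mathrm{op}}\otimes_\Hq F)\otimes_\R\C\cong \bar E\otimes_\C F,
\end{equation*}
not $E\otimes_\C F$. The reason is that the right action on $E^{\mathrm{op}}$ involves quaternionic conjugation, so $i$ acts on the relevant summand of $E^{\mathrm{op}}\otimes_\R\C$ by $-\sqrt{-1}$. This does not hurt you. Left multiplication by $j$ is a $\C$-antilinear automorphism of $E$ that commutes with all $\Hq$-linear maps. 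It therefore gives a natural isomorphism $\bar E\cong E$ of underlying complex bundles, and your conclusion $c(\theta^2)=c'(\theta)^2$ stands.
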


Besides the realification map, we will make use also of the transformation $h:KU^\ast\rightarrow KSp^\ast$ induced by treating $\Hq$ as a right $\C$-module and tensoring a complex vector bundle with $\Hq$. 

\begin{prop}\label{prop:quaternionification}
On the coefficient groups the transformation is described as follows:
\begin{equation*}
    h(\beta^n)=\begin{cases}
        \lambda^{k-1}\alpha\theta & \text{if }n=4k,\\
        0 & \text{if }n=4k+1,\\
        2\lambda^{k}\theta & \text{if }n=4k+2,\\
        \eta^2\lambda^k\theta & \text{if }n=4k+3.
    \end{cases}
\end{equation*}
In particular, the image of the composition
\begin{equation*}
    KU^\ast(\pt)\xrightarrow{h} KSp^\ast(\pt)\xrightarrow{\cdot\theta} KO^{\ast-4}(\pt)
\end{equation*}
is again the ideal $(2,\eta^2,\alpha)$.
\end{prop}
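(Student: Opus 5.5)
The plan is to reduce the computation of $h$ to the computation of the realification $r$ in Proposition \ref{prop:realificationImage}. I will do this through a single projection-type formula relating $h$, $r$ and the complexification $c':KSp^\ast\rightarrow KU^\ast$ (restriction of scalars from $\Hq$ to $\C$).

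\textbf{Step 1 (projection formula).} For a complex bundle $E$ and an $\Hq$-bundle $F$, write $c'(F)$ for $F$ regarded as a complex bundle. I will prove the natural isomorphism of $\Hq$-bundles
\begin{equation*}
    (E\otimes_\C c'(F))\otimes_\C \Hq\;\cong\; (E\otimes_\R F)\oplus(\bar E\otimes_\R F).
\end{equation*}
The right-hand side represents $r(E)\cdot F$ in the $KO$-module structure of $KSp$, since the underlying real bundle of $E$ is $E\otimes_\C\C$ with $\C\cong\R^2$, and $E\cong\bar E$ as real bundles. This gives the identity
\begin{equation*}
    h(y\cdot c'(z))=r(y)\cdot z ,\qquad y\in KU^\ast,\ z\in KSp^\ast .
\end{equation*}
The argument is fibrewise linear algebra: $\Hq=\C\oplus \C j$ as a $(\C,\Hq)$-bimodule, with $j$ conjugating the $\C$-action. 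Passing from bundles to all degrees is routine by suspension, since all maps involved are maps of spectra compatible with the module structures.

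\textbf{Step 2 (compute $c'(\theta)$).} The tautological quaternionic line bundle over $\Hq P^1\cong\sph^4$, viewed as a rank-two complex bundle, has second Chern class a generator of $H^4(\sph^4)$. Hence its reduced class generates $\widetilde{KU}^0(\sph^4)$, which gives $c'(\theta)=\pm\beta^2$. Replacing $\beta$ by $-\beta$ if needed, or absorbing the sign into the choice of generators, I may take $c'(\theta)=\beta^2$.

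\textbf{Step 3 (read off $h$).} Applying Step 1 with $y=\beta^{n-2}$ and $z=\theta$ gives
\begin{equation*}
    h(\beta^n)=r(\beta^{n-2})\,\theta .
\end{equation*}
Inserting Proposition \ref{prop:realificationImage} yields the four stated cases:
\begin{itemize}
    \item for $n=4k$, we have $r(\beta^{4(k-1)+2})=\alpha\lambda^{k-1}$;
    \item for $n=4k+1$, we have $r(\beta^{4(k-1)+3})=0$;
    \item for $n=4k+2$, we have $r(\beta^{4k})=2\lambda^k$;
    \item for $n=4k+3$, we have $r(\beta^{4k+1})=\eta^2\lambda^k$.
\end{itemize}
Multiplying by $\theta$ and using $\theta^2=\lambda$ from Proposition \ref{prop:KSptoKO}, the image of the composite is $\lambda\cdot\im(r)$. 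By Proposition \ref{prop:realificationImage} this is $\lambda(2,\eta^2,\alpha)$, which equals $(2,\eta^2,\alpha)$ because $\lambda$ is a unit.

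The main obstacle is Step 1. One must check carefully that the isomorphism is compatible with the left $\Hq$-structures and with the chosen normalisation of the $KO$-module action on $KSp$. The remaining risk is sign or conjugation conventions in Step 2, namely whether $c'(\theta)=\beta^2$ or $-\beta^2$. This only affects signs, which are irrelevant for the image statement and harmless for the listed values up to the choice of generators.
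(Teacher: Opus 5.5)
The paper does not prove this proposition. It is quoted, with the rest of Section~\ref{sec:preliminaries:coefficients}, from the references given there, where both the $r$-table and the $h$-table are read off from Bott periodicity. Your argument is therefore a genuinely different, self-contained route, and it is correct once two points below are repaired.

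You use the projection formula $h(y\cdot c'(z))=r(y)\cdot z$ and the normalisation $c'(\theta)=\beta^2$ to get $h(\beta^n)=r(\beta^{n-2})\,\theta$. This shows the $h$-table is the $r$-table of Proposition~\ref{prop:realificationImage} shifted by $\beta^2$. It also makes the ``in particular'' a one-liner: $\theta\cdot h(\beta^n)=\lambda\, r(\beta^{n-2})$, so the image is $\lambda\cdot\im(r)=(2,\eta^2,\alpha)$. The four case checks are right, and the identity $h(y\cdot c'(z))=r(y)\cdot z$ is true. Your route explains why the two tables agree and reduces the image claim to the one already made for $r$; the citation buys brevity.

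\textbf{Repair 1: the displayed isomorphism in Step~1 is wrong as written.} Put $\dim_\C E=a$ and $\dim_\Hq F=b$, and read $E\otimes_\R F$ as $E_\R\otimes_\R F$. Then your right-hand side has quaternionic rank $4ab$ while the left-hand side has rank $2ab$, and it would represent $2r(E)\cdot F$. The correct statement is $\Hq\otimes_\C(E\otimes_\C c'F)\cong E_\R\otimes_\R F$. To prove it:
\begin{itemize}
\item Split $E_\R\otimes_\R F=W_-\oplus W_+$ into the $(-1)$- and $(+1)$-eigenspaces of $i_E\otimes i_F$.
\item On $W_-$ the complex structures $i_E\otimes 1$ and $1\otimes i_F$ agree. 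The quotient map $E_\R\otimes_\R c'F\to E\otimes_\C c'F$ kills exactly the image of $i_E\otimes 1-1\otimes i_F$, which is $W_+$. Hence $W_-\cong E\otimes_\C c'F$.
\item Left multiplication by $j$ on $F$ anticommutes with $i_F$, so it carries $W_-$ isomorphically onto $W_+$.
\item Therefore $q\otimes w\mapsto qw$ is an isomorphism of left $\Hq$-bundles $\Hq\otimes_\C W_-\to E_\R\otimes_\R F$, natural in $E$ and $F$.
\end{itemize}
Your $(E\otimes F)\oplus(\bar E\otimes F)$ is presumably the complex splitting $W_-\oplus W_+$, which is not a decomposition of $\Hq$-bundles. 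An alternative proof: both sides have underlying complex bundle $(E\oplus\bar E)\otimes_\C c'F$, and $c'$ is injective on quaternionic representation rings.

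\textbf{Repair 2: the sign fix in Step~2 does not work.} Replacing $\beta$ by $-\beta$ leaves $\beta^2$ unchanged. The sign has to be put on $\theta$ instead, which is harmless since $(-\theta)^2=\lambda$. In fact the stated table itself forces this normalisation. We have $c'h=1+\psi^{-1}$ and $\psi^{-1}(\beta^2)=\beta^2$, so $c'h(\beta^2)=2\beta^2$. Since $c'$ is injective on $KSp^{-4}(\pt)\cong\Z$, $h(\beta^2)=2\theta$ holds exactly when $c'(\theta)=\beta^2$.
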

\section{Invariants of real affine varieties}\label{sec:invariants}
In this section we define new invariants of real affine varieties based on complex- and Real-oriented cohomology theories, and derive some of their basic properties.
\subsection{Definitions}
The definitions will be based on the following theorem:
\begin{thm}\label{thm:morphismExists}
Let $\var{X}$ and $\var{Y}$ be two nonsingular complete real affine varieties and let $\Rvar{X}$ and $\Rvar{Y}$ be some nonsingular projective complexifications of $\var{X}$ and $\var{Y}$ respectively. Denote the inclusions by $i:\var{X}\hookrightarrow \Rvar{X}$ and $j:\var{Y}\hookrightarrow \Rvar{Y}$. Let $f:\var{X}\rightarrow \var{Y}$ be a regular map.

Let $E^\ast$ be a complex-oriented cohomology theory. Then, there is a homomorphism of $E^\ast(\pt)$-modules $\varphi:E^\ast(\Rvar{Y}(\C))\rightarrow E^\ast(\Rvar{X}(\C))$ making the diagram below commutative:
\begin{center}
\begin{tikzcd}
E^\ast(\Rvar{X}(\C)) \arrow[d, "i^\ast"] & E^\ast(\Rvar{Y}(\C)) \arrow[d, "j^\ast"] \arrow[l, "\varphi"'] \\
E^\ast(\var{X})                      & E^\ast(\var{Y}) \arrow[l, "f^\ast"']                      
\end{tikzcd}
\end{center}

Similarly, let $F^\star$ be a Real-oriented equivariant cohomology theory. Then, there is a homomorphism of $F^\star(\pt)$-modules $\psi:F^\star(\Rvar{Y})\rightarrow F^\star(\Rvar{X})$ making the diagram below commutative:
\begin{center}
\begin{tikzcd}
F^\star(\Rvar{X}) \arrow[d, "i^\ast"] & F^\star(\Rvar{Y}) \arrow[d, "j^\ast"] \arrow[l, "\psi"'] \\
F^\star(\var{X})                  & F^\star(\var{Y}) \arrow[l, "f^\ast"']               
\end{tikzcd}
\end{center}
Here $\Rvar{X}$ and $\Rvar{Y}$ are to be understood as Real spaces with the corresponding conjugations, while $\var{X}$ and $\var{Y}$ are to be understood as Real spaces with the trivial involution.
\begin{proof}
We consider only the case of a Real-oriented cohomology theory; the other case is similar. Denote the real dimension of $\var{X}$ (and hence the complex dimension of $\Rvar{X}(\C)$) by $d$.

Applying Proposition \ref{prop:resolutionOfComplexification} we obtain a nonsingular projective complexification $\Rvar{X}'$ of $\var{X}$ and two regular maps of $\R$-varieties $\pi:\Rvar{X}'\rightarrow \Rvar{X}$ and $g:\Rvar{X}'\rightarrow \Rvar{Y}$ such that the diagram
\begin{center}
\begin{tikzcd}
\Rvar{X} & \Rvar{X}' \arrow[l, "\pi"'] \arrow[r, "g"]                  & \Rvar{Y}                     \\
  & \var{X} \arrow[r, "f"] \arrow[lu, hook] \arrow[u, hook] & \var{Y} \arrow[u, hook]
\end{tikzcd}
\end{center}
is commutative. We claim that the diagram
\begin{center}
\begin{tikzcd}
F^\star(\Rvar{X}) \arrow[rd] & F^\star(\Rvar{X}') \arrow[d] \arrow[l, "\pi_!"'] & F^\star(\Rvar{Y}) \arrow[d] \arrow[l, "g^\ast"'] \\
                     & F^\star(\var{X})                             & F^\star(\var{Y}) \arrow[l, "f^\ast"']     
\end{tikzcd}
\end{center}
is also commutative, so that we can define $\psi:=\pi_!\circ g^\ast$. Here $\pi_!$ is the Gysin homomorphism, defined as
\begin{equation*}
    \pi_!=D_{\Rvar{X}}^{-1}\circ \pi_\ast\circ D_{\Rvar{X}'},
\end{equation*}
where
\begin{equation*}
    D_{\Rvar{X}}:F^\star(\Rvar{X})\rightarrow F_{d+d\tau-\star} (\Rvar{X}),\quad D_{\Rvar{X}'}:F^\star(\Rvar{X}')\rightarrow F_{d+d\tau-\star} (\Rvar{X}')
\end{equation*}
are the Poincar\'{e} duality isomorphisms induced by taking cap product with the fundamental classes. 

What requires verification is commutativity of the left triangle. To prove it, let us start by choosing a smooth function $\xi:\Rvar{X}\rightarrow \R^{\geq 0}$ with zero set equal to $\var{X}$. After substituting $\xi+\xi\circ \sigma$ for $\xi$ we may assume that it satisfies $\xi\circ \sigma=\xi$, where $\sigma$ is the conjugation on $\Rvar{X}$. Let $U$ be the neighbourhood of $\var{X}$ from Proposition \ref{prop:resolutionOfComplexification}. Using Sard's theorem, we find a small real number $\varepsilon$ such that $M:=\xi^{-1}([0,\varepsilon])$ is a smooth compact manifold with boundary, contained in $U$. By construction it is invariant under $\sigma$ and contains $\var{X}$. 

As explained in Section \ref{sec:preliminaries:cohomology}, the manifolds $\Rvar{X}$ and $\Rvar{X}'$ carry fundamental classes $[\Rvar{X}]\in F_{d+d\tau}(\Rvar{X})$ and $[\Rvar{X}']\in F_{d+d\tau}(\Rvar{X}')$. The manifold $M$ is a $\R^{d+d\tau}$-manifold with boundary, and its tangent bundle is the restriction of the $F$-oriented tangent bundle to $U$. It follows that $M$ is also oriented in the sense of \cite[Chapter XVI, Definition 9.3]{mayEquivariantHomotopyCohomology1996}, and it carries a fundamental class $[M,\partial M]\in F_{d+d\tau}(M,\partial M)$. 

Define $A:=\Rvar{X}\backslash \inte M$ and $B:=\Rvar{X}'\backslash \pi^{-1}(\inte M)$. Denote by $[\Rvar{X},A]$ and $[\Rvar{X}',B]$ the images of $[\Rvar{X}]$ and $[\Rvar{X}']$ in $F_{d+d\tau}(\Rvar{X},A)$ and $F_{d+d\tau}(\Rvar{X}',B)$ respectively. Since the tangent bundle to $U$ is the restriction of the tangent bundle to $\Rvar{X}$ it follows that the class $[\Rvar{X},A]$ maps to the fundamental class $[M,\partial M]$ under the excision isomorphism
\begin{equation*}
    F_{d+d\tau}(\Rvar{X},A)\cong F_{d+d\tau}(M,\partial M).
\end{equation*}
Identifying $U$ with $\pi^{-1}(U)$ by the same argument we obtain that the class $[\Rvar{X}',B]$ maps to $[M,\partial M]$ under the isomorphism 
\begin{equation*}
    F_{d+d\tau}(\Rvar{X}',B)\cong F_{d+d\tau}(M,\partial M).
\end{equation*}

Using the projection formula, one verifies that the following diagram is commutative:
\begin{center}
\begin{tikzcd}
F_{d+d\tau-\star}(\Rvar{X})  \arrow[d]                                                                            &                                                                    & F_{d+d\tau-\star}(\Rvar{X}') \arrow[ll,"\pi_\ast"'] \arrow[d]                                                                                                 \\
{F_{d+d\tau-\star}(\Rvar{X},A)}                                                                                   &                                                                    & {F_{d+d\tau-\star}(\Rvar{X}',B)} \arrow[ll]                                                                                                   \\
                                                                                                             & {F_{d+d\tau-\star}(M,\partial M)} \arrow[lu, "\cong"] \arrow[ru, "\cong"'] &                                                                                                                        \\
F^\star(\Rvar{X}) \arrow[uuu, "{\frown [\Rvar{X}]}"', bend left=70] \arrow[uu, "{\frown[\Rvar{X},A]}"'] \arrow[r] \arrow[rd] & F^\star(M) \arrow[u, "{\frown[M,\partial M]}"'] \arrow[d]              & F^\star(\Rvar{X}') \arrow[uuu, "{\frown [\Rvar{X}']}"', bend right=70] \arrow[uu, "{\frown [\Rvar{X}',B]}"'] \arrow[l] \arrow[ld] \\
                                                                                                             & F^\star(\var{X})                                                           &                                                                                                                       
\end{tikzcd}
\end{center}
The two arrows in the middle are isomorphisms due to excision. The arrows induced by taking cap products with $[\Rvar{X}],[\Rvar{X}']$ and $[M,\partial M]$ are isomorphisms as well thanks to Poincar\'{e}--Lefschetz duality (see \cite[Chapter XVI, Definition 9.3]{mayEquivariantHomotopyCohomology1996}). Chasing an element from $F^\star(\Rvar{X}')$ around the diagram one verifies that it maps to the same element of $F^\star(\var{X})$ regardless of whether we go directly or through $\pi_!$. This finishes the proof.
\end{proof}
\end{thm}
From the result we obtain the following:
\begin{cor}\label{cor:inclusion}
Let $\var{X}$ be a nonsingular complete real affine variety. Let $\Rvar{X}$ be a nonsingular projective complexification of $\var{X}$ and let $i:\var{X}\rightarrow \Rvar{X}$ be the inclusion. Let $E^\ast$ be a complex-oriented cohomology theory. Then, the $E^\ast(\pt)$-algebra $E^\ast_\C(\var{X}):=i^\ast (E^\ast(\Rvar{X}(\C)))\subset E^\ast(\var{X})$ depends only on $\var{X}$ and not on $\Rvar{X}$. Moreover, if $f:\var{X}\rightarrow \var{Y}$ is a continuous map between two nonsingular complete real affine varieties which is homotopic to a regular one, then $f^\ast(E^\ast_\C(\var{Y}))\subset E^\ast_\C(\var{X})$.

Similarly, let $F^\star$ be a Real-oriented equivariant cohomology theory. Then, the $F^\star(\pt)$-algebra $F^\star_\C(\var{X}):=i^\ast (F^\star(\Rvar{X}))\subset F^\star(\var{X})$ depends only on $\var{X}$ and not on $\Rvar{X}$. Moreover, if $f:\var{X}\rightarrow \var{Y}$ is a continuous map between two nonsingular complete real affine varieties which is homotopic to a regular one, then $f^\ast(F^\star_\C(\var{Y}))\subset F^\star_\C(\var{X})$.
\begin{proof}
Again, we only prove the second part. Let $\Rvar{X}'$ be another nonsingular projective complexification of $\var{X}$. Applying Theorem \ref{thm:morphismExists} to the identity $\var{X}\rightarrow \var{X}$ we find a homomorphism $\psi:F^\star(\Rvar{X}')\rightarrow F^\star(\Rvar{X})$ making the diagram
\begin{center}
\begin{tikzcd}
F^\star(\Rvar{X}) \arrow[rd, "i^\ast"] & F^\star(\Rvar{X}') \arrow[d, "i'^\ast"] \arrow[l, "\psi"'] \\
                                & F^\star(\var{X})                                      
\end{tikzcd}
\end{center}
commutative. It follows that $i'^\ast(F^\star(\Rvar{X}'))\subset i^\ast(F^\star(\Rvar{X}))$. By symmetry we have an equality, so $F^\star_\C(\var{X})$ is well defined.

Let now $f:\var{X}\rightarrow \var{Y}$ be a continuous map, which is homotopic to a regular one. Due to homotopy invariance of generalised cohomology, without loss of generality we may assume that $f$ itself is regular. Fix any nonsingular projective complexifications $\Rvar{X}$ and $\Rvar{Y}$ of $\var{X}$ and $\var{Y}$ respectively. Again applying Theorem \ref{thm:morphismExists} we find a homomorphism $\xi:F^\star(\Rvar{Y})\rightarrow F^\star(\Rvar{X})$ making the diagram
\begin{center}
\begin{tikzcd}
F^\star(\Rvar{X}) \arrow[d, "i^\ast"] & F^\star(\Rvar{Y}) \arrow[d, "j^\ast"] \arrow[l, "\xi"'] \\
F^\star(\var{X})                  & F^\star(\var{Y}) \arrow[l, "f^\ast"']               
\end{tikzcd}
\end{center}
commutative. It follows that $f^\ast\circ j^\ast (F^\star(\Rvar{Y}))\subset i^\ast(F^\star(\Rvar{X}))$.
\end{proof}
\end{cor}
\begin{rem}
In the case when the cohomology theory $E^\ast$ is singular cohomology with coefficients in a commutative ring with unity $R$ and $\var{X}$ is $R$-orientable, the $R$-algebras $H^\ast_\C(\var{X};R)$ were defined in \cite{ozanHomologyRealAlgebraic2001} by Ozan. The current definition is a significant generalisation of that. 
\end{rem}

\begin{rem}
The existence of the homomorphism $\xi$ making the diagram at the end of the proof of Corollary \ref{cor:inclusion} commutative is a stronger property than the inclusion $f^\ast(F^\star_\C(\var{Y}))\subset F^\star_\C(\var{X})$. Thus, even if the inclusion holds true, one can define a secondary obstruction in an appropriate $\Ext$ group. We will not pursue this line of thought in this paper though.
\end{rem}

For convenience, we extend the definition given in Corollary \ref{cor:inclusion} for pairs:
\begin{defi}
Let $\var{X}$ be a complete nonsingular real affine variety, and let $C\subset \var{X}$ be any subset. Let $E^\ast$ be a complex-oriented cohomology theory. Define 
\begin{equation*}
    E^\ast_\C(\var{X},C):=(\iota^\ast)^{-1}(E^\ast_\C(\var{X})),
\end{equation*}
where $\iota$ is the inclusion of pairs $(\var{X},\emptyset)\hookrightarrow (\var{X},C)$ and $\iota^\ast:E^\ast(\var{X},C)\rightarrow E^\ast(\var{X})$ is the induced morphism.
Similarly, if $F^\star$ is a Real-oriented equivariant cohomology theory, then define
\begin{equation*}
    F^\star_\C(\var{X},C):=(\iota^\ast)^{-1}(F^\star_\C(\var{X})).
\end{equation*}
\end{defi}
Straight from the definition we get the following
\begin{obs}\label{obs:inclusion}
Let $\var{X}$ and $\var{Y}$ be two complete nonsingular real affine varieties and let $C\subset \var{X}, D\subset \var{Y}$ be their subsets. Let $f:(\var{X},C)\rightarrow (\var{Y},D)$ be a continuous map of pairs. Assume that $f$, treated as a map between $\var{X}$ and $\var{Y}$ is homotopic to a regular one. Let $F^\star$ be a Real-oriented equivariant cohomology theory. Then,
\begin{equation*}
    f^\ast(F^\star_\C(\var{Y},D))\subset F_\C^\star(\var{X},C).
\end{equation*}

A similar inclusion holds true for a complex-oriented cohomology theory $E^\ast$.
\begin{proof}
It suffices to apply Corollary \ref{cor:inclusion} and consider the diagram
\begin{center}
\begin{tikzcd}
{F^\star(\var{Y},D)} \arrow[d, "f^\ast"] \arrow[r] & F^\star(\var{Y}) \arrow[d, "f^\ast"] \\
{F^\star(\var{X},C)} \arrow[r]                      & F^\star(\var{X})                    
\end{tikzcd}
\end{center}
\end{proof}
\end{obs}

\begin{rem}\label{rem:defOfPair}
Choosing a nonsingular projective complexification of $\var{X}$ and analysing the diagram
\begin{center}
\begin{tikzcd}
F^{\star-1}(C) \arrow[d, "\cong"] \arrow[r] & {F^\star(\Rvar{X},C)} \arrow[d] \arrow[r] & F^\star(\Rvar{X}) \arrow[d] \arrow[r] & F^{\star}(C) \arrow[d, "\cong"] \\
F^{\star-1}(C) \arrow[r]                    & {F^\star(\var{X},C)} \arrow[r]        & F^\star(\var{X}) \arrow[r]        & F^{\star}(C)                   
\end{tikzcd}
\end{center}
with exact rows one can verify that $F^\star_\C(\var{X},C)$ may alternatively be defined as $i^\ast(F^\star(\Rvar{X},C))$, where $i:(\var{X},C)\hookrightarrow (\Rvar{X},C)$ is the inclusion of pairs. A similar statement is true in the complex-oriented case.
\end{rem}

If $\var{X}$ is a complete nonsingular real affine variety with a chosen base point $x_0\in \var{X}$, then we define the reduced version of $\widetilde F_\C^\star(\var{X})$ as $\widetilde F^\star_\C(\var{X}):=F^\star_\C(\var{X},x_0)$ and similarly in the complex-oriented case. We have
\begin{obs}\label{obs:unreducedVSreduced}
Let $\var{X}$ be a complete nonsingular real affine variety with a chosen base point $x_0\in \var{X}$ and let $F^\star$ be a Real-oriented equivariant cohomology theory. Then, there is an isomorphism
\begin{equation*}
    F^\star_\C(\var{X})\cong \widetilde F_\C^\star(\var{X})\oplus F^\star (\pt).
\end{equation*}
A similar statement is true in the complex-oriented case.
\begin{proof}
Let $\Rvar{X}$ be a nonsingular projective complexification of $\var{X}$. It suffices to recall the definitions and consider the following diagram whose rows are split-exact in a compatible way:
\begin{center}
\begin{tikzcd}
0 \arrow[r] & {F^\star(\Rvar{X},x_0)} \arrow[r] \arrow[d] & F^\star(\Rvar{X}) \arrow[r] \arrow[d] & F^\star(x_0) \arrow[r] \arrow[d] & 0 \\
0 \arrow[r] & {F^\star(\var{X},x_0)} \arrow[r]        & F^\star(\var{X}) \arrow[r]        & F^\star(x_0) \arrow[r]           & 0
\end{tikzcd}
\end{center}
\end{proof}
\end{obs}

Recall from Section \ref{sec:preliminaries:coefficients}, that in the particular case of K-theory, there is the realification transformation $r:KU^\ast\rightarrow K\R^\ast$. As a consequence of its functoriality, we get
\begin{obs}\label{obs:realificationKC}
Let $\var{X}$ be a complete nonsingular real affine variety and let $C\subset \var{X}$ be its subset. Then,
\begin{equation*}
    r(KU^p_\C(\var{X},C))\subset K\R^p_\C(\var{X},C)
\end{equation*}
for all $p\in \Z$.
\begin{proof}
Fix a nonsingular projective complexification $\Rvar{X}$ of $\var{X}$. It suffices to consider the commutative diagram
\begin{center}
\begin{tikzcd}
{KU^\ast(\Rvar{X}(\C),C)} \arrow[d] \arrow[r, "r"] & {K\R^\ast(\Rvar{X}(\C),C)} \arrow[d] \\
{KU^\ast(\var{X},C)} \arrow[r, "r"]            & {K\R^\ast(\var{X},C)}           
\end{tikzcd}
\end{center}
\end{proof}
\end{obs}

\subsection{A comparison map from algebraic K-theory}
Let $\var{X}$ be a complete nonsingular real affine variety. Recall from the introduction that there is the monomorphism
\begin{equation*}
    \iota :K_0(\RR(\var{X}))\rightarrow K_0(\mathcal C(\var{X}))\cong KO^0(\var{X}),
\end{equation*}
where the last isomorphism is due to topological Serre-Swan theorem. We denote its image by
\begin{equation*}
    KO^0_{\C-\alg}(\var{X}):=\iota(K_0(\RR(\var{X}))).
\end{equation*}

There are also two analogous monomorphisms
\begin{align*}
    \iota' &:K_0(\RR(\var{X},\C))\rightarrow K_0(\mathcal C(\var{X},\C))\cong KU^0(\var{X}), \\
    \iota''&:K_0(\RR(\var{X},\Hq))\rightarrow K_0(\mathcal C(\var{X},\Hq))\cong KSp^0(\var{X}),
\end{align*}
where $\RR(\var{X},\C)$ is understood as the ring whose elements are regular maps from $\var{X}$ to $\R^2$, with pointwise addition and multiplication induced by the field structure of $\C$. The ring $\RR(\var{X},\Hq)$ is defined in a similar way. The homomorphisms $\iota'$ and $\iota''$ are injective as well. We introduce similar notation
\begin{equation*}
    KU^0_{\C-\alg}(\var{X}):=\iota'(K_0(\RR(\var{X},\C))),\quad KSp^0_{\C-\alg}(\var{X}):=\iota''(K_0(\RR(\var{X},\Hq))).
\end{equation*}
These groups have been extensively used as obstructions to the existence of regular maps, due to the following:
\begin{obs}
Let $\var{X}$ and $\var{Y}$ be two complete nonsingular real affine varieties. Let $f:\var{X}\rightarrow \var{Y}$ be a continuous map. Assume that it is homotopic to a regular one. Then, the induced morphism
\begin{equation*}
    f^\ast:KO^0(\var{Y})\rightarrow KO^0(\var{X})
\end{equation*}
satisfies 
\begin{equation*}
    f^\ast(KO^0_{\C-\alg}(\var{Y}))\subset KO_{\C-\alg}^0(\var{X}).
\end{equation*}
A similar statement is true in the complex and quaternionic case.
\begin{proof}
Without loss of generality we can assume that $f$ is regular itself. The conclusion then follows from commutativity of the following diagram:
\begin{center}
\begin{tikzcd}
K_0(\RR(\var{Y})) \arrow[r, "f^\ast"] \arrow[d]          & K_0(\RR(\var{X})) \arrow[d]          \\
K_0(\mathcal C(\var{Y})) \arrow[r, "f^\ast"] \arrow[d, "\cong"] & K_0(\mathcal C(\var{X})) \arrow[d, "\cong"] \\
KO^0(\var{Y}) \arrow[r, "f^\ast"]                        & KO^0(\var{X})                       
\end{tikzcd}
\end{center}
\end{proof}
\end{obs}

These algebraic K-groups are fundamentally connected to the invariants considered in the previous subsection due to the following consequence of the exact sequence of a localisation in algebraic K-theory:
\begin{prop}\label{prop:KCalgInKC}
Let $\var{X}$ be a complete nonsingular affine variety. Then
\begin{equation*}
    KO^0_{\C-\alg}(\var{X})\subset K\R^0_\C(\var{X})
\end{equation*}
and 
\begin{equation*}
    KU^0_{\C-\alg}(\var{X})\subset KU^0_\C(\var{X}),
\end{equation*}
where $KO^0(\var{X})$ is identified with $K\R^0(\var{X})$ through the natural isomorphism.
\begin{proof}
Let us begin with the first part. We will make use of the natural transformation between the functors
\begin{equation*}
    \Rvar{X}\mapsto K_0(\Rvar{X}),\quad \Rvar{X}\mapsto K\R^0(\Rvar{X})
\end{equation*}
defined on the category of nonsingular quasi-projective $\R$-varieties, sending an algebraic vector bundle to the underlying topological one with complex conjugation (cf. \cite{karoubiAlgebraicRealKtheory2003}). We denote this transformation by $\alpha$.

According to \cite[Lemma 12.6.3]{bochnakRealAlgebraicGeometry1998}, $\var{X}$ can be embedded as a Zariski closed subset of $\R^n$ for some $n$ in such a way that the Zariski closure $\Rvar{A}$ of $\var{X}$ in $\C^n$ is a nonsingular affine $\R$-variety. By definition, the ring $\RR(\var{X})$ of globally defined regular functions on $\var{X}=\Rvar{A}(\R)$ is the localisation of the ring $\mathcal O_{\Rvar{A}}(\Rvar{A})$ of globally defined regular functions on $\Rvar{A}$ in the multiplicative monoid of functions different from zero at all points of $\Rvar{A}(\R)$. 

Now, using Hironaka's theorem on resolution of singularities, we can find a nonsingular projective $\R$-variety $\Rvar{X}$, such that $\Rvar{A}$ embeds as a Zariski open and dense subset of $\Rvar{X}$. It follows that $\var{X}$ is Zariski dense in $\Rvar{X}(\C)$. It must then be true that $\Rvar{X}(\R)=\Rvar{A}(\R)=\var{X}$, for otherwise $\Rvar{X}(\R)$ would contain noncentral points \cite[Corollary 2.3.3]{akbulutTopologyRealAlgebraic1992} and hence be singular.

Consider now the following diagram, which is commutative due to the definitions of $\alpha$ and $\iota$:
\begin{center}
\begin{tikzcd}
K_0(\Rvar{X}) \arrow[d] \arrow[r, "\alpha"]          & K\R^0(\Rvar{X}) \arrow[d]  \\
K_0(\Rvar{A}) \arrow[d, "\cong"] \arrow[r, "\alpha"] & K\R^0(\Rvar{A}) \arrow[dd] \\
K_0(\mathcal O_{\Rvar{X}}(\Rvar{A})) \arrow[d]                &                     \\
K_0(\RR(\var{X})) \arrow[r, "\iota"]             & K\R^0(\var{X})        
\end{tikzcd}
\end{center}
Here, the isomorphism on the left is due to algebraic Serre-Swan theorem, and the bottom left arrow is induced by the inclusion of $\mathcal O_{\Rvar{X}}(\Rvar{A})$ in its localisation $\RR(\var{X})$. 

To finish the proof, we assert that the composition of the vertical arrows in the left column is surjective. Indeed, the bottom left vertical arrow is surjective due to \cite[Theorem 6.5, p. 499]{bassAlgebraicKtheory1968}, and the upper left vertical arrow is surjective due to \cite[Application II.6.4.2 and Theorem II.8.2]{weibelKbookIntroductionAlgebraic2013}. It follows that 
\begin{equation*}
    \iota(K_0(\RR(\var{X})))\subset K\R^0_\C(\var{X}).    
\end{equation*}

The second part follows by a similar argument, with the difference that instead of the transformation $\alpha$ we need to consider the natural transformation between the functors
\begin{equation*}
    \Rvar{X}(\C)\mapsto K_0(\Rvar{X}(\C)),\quad \Rvar{X}(\C)\mapsto KU^0(\Rvar{X}(\C))
\end{equation*}
defined on the category of nonsingular quasi-projective complex varieties \cite{baumRiemannRochTopologicalKtheorySingular1979}.
\end{proof}
\end{prop}
\begin{rem}
The notation $KO_{\C-\alg}^0$ and $KU_{\C-\alg}^0$ is motivated by the classical functor $H^{2\ast}_{\C-\alg}(-;\Z)$ defined in terms of the Chow ring (see \cite{buchnerAlgebraicVectorBundles1987}), which bears a similar relation to $H_\C^\ast(-;\Z)$ as algebraic K-theory does to $K\R^0_\C$ and $KU_\C^0$. It is possible that similar \say{$\C$-algebraic} invariants can be defined in connection to other complex- and Real-oriented cohomology theories as well. Perhaps the notion of an oriented cohomology theory on the category of smooth varieties over $\C$ or $\R$, and that of algebraic cobordism of Morel and Levine \cite{AlgebraicCobordism2007} could be incorporated in our current approach. 
\end{rem}
\begin{defi}\label{defi:KOCfromKRC}
For convenience, we introduce the notation $KO^p_{\C}(\var{X})\subset KO^p(\var{X})$ for the subgroup of $KO^p(\var{X})$ corresponding to the group $K\R_\C^{p+0\tau}(\var{X})$ under the isomorphism
\begin{equation*}
    K\R^{p+0\tau}(\var{X})\cong KO^p(\var{X}).
\end{equation*}
\end{defi}
The first part of the conclusion of Proposition \ref{prop:KCalgInKC} can then be restated as
\begin{equation*}
    KO^0_{\C-\alg}(\var{X})\subset KO^0_{\C}(\var{X}).
\end{equation*}

In the case of quaternionic K-theory, a similar result holds true:
\begin{prop}\label{prop:quaternionCycleClass}
Let $\var{X}$ be a complete nonsingular real affine variety. Then, the image of $KSp^0_{\C-\alg}(\var{X})$ through the isomorphism described in Proposition \ref{prop:KSptoKO}
\begin{equation*}
    KSp^0(\var{X})\xrightarrow{\cong} KO^{-4}(\var{X})
\end{equation*}
is contained in $KO^{-4}_\C(\var{X})$.
\end{prop}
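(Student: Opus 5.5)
The plan is to mimic the proof of Proposition \ref{prop:KCalgInKC}, replacing the quaternionic algebraic bundle by an algebraic vector bundle on the complexification with an extra structure. That structure should define a class in a Real K-group of shifted degree.

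\textbf{Reduction to Real bundles.} A quaternionic bundle over $\var{X}$ is a complex vector bundle $F$ (via $\C\subset\Hq$) with an antilinear map $J$ satisfying $J^2=-1$. A regular $\Hq$-bundle is given by an idempotent matrix $P$ over $\RR(\var{X},\Hq)$. Writing $\Hq=\C\oplus\C j$, this becomes an idempotent complex matrix $Q$ over $\RR(\var{X},\C)$, commuting with the antilinear structure $J_0$ on $\C^{2n}$.

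As in Proposition \ref{prop:KCalgInKC}, embed $\var{X}$ so that its Zariski closure $\Rvar{A}$ is a nonsingular affine $\R$-variety, and take a nonsingular projective $\R$-variety $\Rvar{X}\supset\Rvar{A}$. The entries of $Q$ are quotients of polynomials with denominators nonvanishing on $\var{X}$. Clearing denominators, I would first realise the class $[P]$ as coming from an algebraic vector bundle $\mathcal F$ on a $\sigma$-invariant Zariski open neighbourhood of $\var{X}$ in $\Rvar{X}$. This bundle carries an antilinear lift $\tilde J$ of $\sigma$ with $\tilde J^2=-1$, i.e.\ a "Quaternionic" (Dupont/Atiyah symplectic-Real) bundle.

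\textbf{Extension to $\Rvar{X}$.} I would do the extension at the level of $K_0$. Quaternionic algebraic bundles on $\Rvar{A}$ correspond to $K_0$ of the hermitian/symplectic category, i.e.\ modules over $\mathcal O_{\Rvar A}(\Rvar A)\otimes_\R\Hq$, an Azumaya algebra. The surjectivity results invoked before (Bass localisation, and \cite{weibelKbookIntroductionAlgebraic2013} restriction to open subsets for regular schemes) still apply, since $\Hq\otimes\mathcal O$ is a regular noncommutative ring, or equivalently one works with $K_0$ of the $\R$-scheme twisted by the quaternion algebra. This gives a class on $\Rvar{X}$ restricting to $[P]$.

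\textbf{Topological identification.} The main obstacle is the next step. Topologically, Quaternionic bundles over a Real space define $KH^0(\Rvar{X})$, and Dupont's theorem gives a natural isomorphism $KH^0(-)\cong K\R^{4}(-)\cong K\R^{-4}(-)$ by $8$-periodicity, induced by multiplication with a class of the sphere $\sph^{4\tau}$ or similar. On spaces with trivial involution, this isomorphism must agree with the isomorphism $KSp^0\cong KO^{-4}$ of Proposition \ref{prop:KSptoKO} given by $\theta$, possibly up to a unit. I would verify this by naturality, reducing to the point where both are determined by the generator $\theta$.

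Given that compatibility, the commutative square consisting of the restriction $KH^0(\Rvar{X})\to KH^0(\var{X})=KSp^0(\var{X})$ and the isomorphisms to $K\R^{-4}$ shows that the image lies in $i^\ast K\R^{-4}(\Rvar{X})=K\R^{-4}_\C(\var{X})=KO^{-4}_\C(\var{X})$. This is the claim.

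If Dupont's identification proves awkward, a fallback is available. Express $KH$ as $K\R$ of $\Rvar X\times$ a Real space with a free twisted action, or realise $KH^0(\mathcal Y)\cong K\R^0(\mathcal Y\times\sph^{4\tau}, \mathcal Y\times \mathrm{pt})$ via a Bott/Thom class of the quaternion line. Then apply naturality of the restriction to real points.
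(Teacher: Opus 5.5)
Your argument is sound in outline, but it takes a genuinely different route from the paper's. The paper never constructs anything quaternionic on the complexification. It takes the algebraic class in $\widetilde{KSp}^0_{\C-\alg}(\sph^4)$ corresponding to $\theta$, which comes from an algebraic model of $\Hq P^1\cong\sph^4$. By Observations \ref{obs:KCalgCup} and \ref{obs:productsExist}, its exterior product with any $x\in\widetilde{KSp}^0_{\C-\alg}(\var{X})$ under $KSp\wedge KSp\to KO$ lies in $KO^0_{\C-\alg}(\var{X}\times\sph^4,\var{X}\vee\sph^4)$. The paper then applies the already proved real case (Proposition \ref{prop:KCalgInKC}) and Proposition \ref{prop:suspensionFC} to land in $\widetilde{KO}^{-4}_\C(\var{X})$. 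The composite is exactly multiplication by $\theta$, and the unreduced case follows by splitting off a point.

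So the paper multiplies by $\theta$ \emph{first}, on the real locus, and only then extends to $\Rvar{X}$; hence only the untwisted extension argument is ever needed. You extend \emph{first}: algebraic quaternionic classes on $\var{X}$ become locally projective $\mathcal{O}_{\Rvar{X}}\otimes_\R\Hq$-modules, hence Quaternionic bundles on $\Rvar{X}$, and only afterwards do you pass to $K\R^{-4}$. This gives a more intrinsic statement (every class in $KSp^0_{\C-\alg}(\var{X})$ lifts to $KH^0(\Rvar{X})$) and does not rely on an algebraic generator over $\sph^4$. The price is that you need Azumaya-algebra versions of the Bass and Weibel results you cite, which are stated for commutative rings and schemes. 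Concretely, you need:
\begin{enumerate}
\item $K_0=G_0$ for $\mathcal{O}\otimes_\R\Hq$ over a regular base;
\item surjectivity of $G_0$ under central localisation and under restriction to open subsets;
\item locally free resolutions on the projective $\Rvar{X}$.
\end{enumerate}
All of this is true but requires proof.

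Two remarks on the topological step. First, you cannot get the compatibility of Dupont's isomorphism with the map of Proposition \ref{prop:KSptoKO} from naturality by ``reducing to the point''. Adams operations on $KO^0(-)$ are natural, additive and the identity on $KO^0(\pt)$, yet not the identity on $\widetilde{KO}^0(\sph^4)$. What you need is that the map is given by a product with a fixed class. Then on trivial-involution spaces it is multiplication by a generator of $KSp^{-4}(\pt)=\Z\theta$, i.e.\ $\pm$ the $\theta$-map, and the sign is irrelevant for images of subgroups.

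Second, you do not need Dupont's theorem at all. Any natural transformation $KH^0(-)\to K\R^{-4}(-)$ on compact Real spaces that restricts to the $\theta$-map on trivial-involution spaces suffices. One is provided by the exterior product with $\theta\in\widetilde{KH}^0(\sph^4)$ (trivial involution) under the pairing $KH\wedge KH\to K\R$, $(E,J_E)\otimes(F,J_F)\mapsto(E\otimes_\C F,J_E\otimes J_F)$. Seen this way, your proof is the paper's trick performed on $\Rvar{X}$ instead of on $\var{X}$.
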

The proof of Proposition \ref{prop:quaternionCycleClass} will be provided in the next subsection.

Since the tensor product of two projective modules is projective, it is clear that $KO^0_{\C-\alg}(\var{X})$ and $KU^0_{\C-\alg}(\var{X})$ comprise subrings of $KO^0(\var{X})$ and $KU^0(\var{X})$ respectively. As expected, the other products described in Section \ref{sec:preliminaries:coefficients} respect the groups $KO_{\C-\alg}^0,KU_{\C-\alg}^0$ and $KSp_{\C-\alg}^0$ as well:
\begin{obs}\label{obs:KCalgCup}
Let $\var{X}$ be a complete nonsingular real affine variety. The products described in Section \ref{sec:preliminaries:coefficients} restrict to the following products:
\begin{align*}
    KO_{\C-\alg}^0(\var{X})&\times KSp_{\C-\alg}^0(\var{X})\rightarrow KSp_{\C-\alg}^0(\var{X}), \\
    KSp_{\C-\alg}^0(\var{X})&\times KSp_{\C-\alg}^0(\var{X})\rightarrow KO_{\C-\alg}^0(\var{X}).
\end{align*}
\begin{proof}
Notice that if $N$ is a free $\RR(\var{X})$-module, and $M$ is a free $\RR(\var{X},\Hq)$-module, then the product $N\otimes_{\RR(\var{X})} M$ is a free $\RR(\var{X},\Hq)$-module. From this, one deduces that if $N$ is a projective $\RR(\var{X})$-module, and $M$ is a projective $\RR(\var{X},\Hq)$-module, then the product $N\otimes_{\RR(\var{X})} M$ is a projective $\RR(\var{X},\Hq)$-module. This proves the first part.

For the second part, recall from Section \ref{sec:preliminaries:coefficients} that on the level of vector bundles, to two (left) $\Hq$-vector bundles $E$ and $F$ over $\var{X}$ the product
\begin{equation*}
    KSp^0(\var{X})\times KSp^0(\var{X})\rightarrow KO^0(\var{X})
\end{equation*}
associates the $\R$-bundle 
\begin{equation*}
    E^{\mathrm{op}}\otimes_\Hq F,
\end{equation*}
where $E^{\mathrm{op}}$ is the right $\Hq$-vector bundle induced by the isomorphism $\Hq\cong \Hq^{\mathrm{op}}$. The same construction can be carried over in the category of modules over $\RR(\var{X},\Hq)=\RR(\var{X})\otimes_\R \Hq$; to modules $M,N$ one associates the $\RR(\var{X})$-module $L:=M^{\mathrm{op}}\otimes_{\RR(\var{X},\Hq)} N$. In this case it is again easy to verify that if the modules $M$ and $N$ are free, then $L$ is free, and hence if $M$ and $N$ are projective then $L$ is projective.
\end{proof}
\end{obs}

Note also, that for similar reasons the realification homomorphism $r:KU^0(\var{X}) \rightarrow KO^0(\var{X})$, and the quaternionification homomorphism $h:KU^0(\var{X})\rightarrow KSp^0(\var{X})$ carry $KU_{\C-\alg}^0(\var{X})$ into $KO_{\C-\alg}^0(\var{X})$ and $KSp_{\C-\alg}^0(\var{X})$ respectively.

Similarly as it was done in the topological case, for our convenience, we pose a version of the definition for pairs:
\begin{defi}
Let $\var{X}$ be a complete nonsingular real affine variety, and let $C\subset \var{X}$ be any subset. Define 
\begin{align*}
    KO^0_{\C-\alg}(\var{X},C)&:=(\iota^\ast)^{-1}(KO^0_{\C-\alg}(\var{X})),
\end{align*}
where $\iota$ is the inclusion of pairs $(\var{X},\emptyset)\hookrightarrow (\var{X},C)$ and $\iota^\ast:KO^0(\var{X},C)\rightarrow KO^0(\var{X})$ is the induced morphism. 

If $\var{X}$ has a chosen base point $x_0\in \var{X}$, define also 
\begin{equation*}
    \widetilde{KO}^0_{\C-\alg}(\var{X}):=KO^0_{\C-\alg}(\var{X},x_0).
\end{equation*}

Define $KU^0_{\C-\alg}(\var{X},C),KSp^0_{\C-\alg}(\var{X},C)$ and $\widetilde{KU}^0_{\C-\alg}(\var{X}),\widetilde{KSp}^0_{\C-\alg}(\var{X})$ similarly.
\end{defi}

\begin{obs}\label{obs:reducedKR}
Let $\var{X}$ be a complete nonsingular real affine variety with a chosen base point $x_0\in \var{X}$. Then, there is an isomorphism
\begin{equation*}
    KO^0_{\C-\alg}(\var{X})\cong \widetilde{KO}^0_{\C-\alg}(\var{X})\oplus KO^0(x_0)\cong \widetilde{KO}^0_{\C-\alg}(\var{X})\oplus \Z.
\end{equation*}
There is a similar isomorphism in the case of $KU^0_{\C-\alg}(\var{X})$ and $KSp^0_{\C-\alg}(\var{X})$.
\begin{proof}
We prove only the first part. The inclusion of $x_0$ in $\var{X}$ is a regular map, so it induces a morphism
\begin{equation}\label{reducedKReq1}
    KO^0_{\C-\alg}(\var{X})\rightarrow KO^0_{\C-\alg}(x_0),
\end{equation}
where we treat $x_0$ as a real affine subvariety of $\var{X}$. The constant map from $\var{X}$ to $x_0$ is also regular, and it induces a section of the morphism \eqref{reducedKReq1}. From this we obtain the split short exact sequence
\begin{equation*}
    0\rightarrow KO^0_{\C-\alg}(\var{X},x_0)\rightarrow KO^0_{\C-\alg}(\var{X})\rightarrow KO^0_{\C-\alg}(x_0)\rightarrow 0.
\end{equation*}
Moreover, it is clear that $KO^0_{\C-\alg}(x_0)=KO^0(x_0)$, from which the conclusion follows.
\end{proof}
\end{obs}

The map
\begin{equation*}
    \iota:K_0(\RR(\var{X}))\rightarrow KO^0(\var{X}) 
\end{equation*}
admits a reduced version
\begin{equation*}
    \tilde \iota:\widetilde K_0(\RR(\var{X}))\rightarrow \widetilde{KO}^0(\var{X}).
\end{equation*}
Having obtained the splitting in Observation \ref{obs:reducedKR} it is easy to see that the image of $\tilde \iota$ is equal to $\widetilde{KO}^0_{\C-\alg}(\var{X})$. Similar statements are true in the unitary and quaternionic case.
\subsection{General properties of the invariants on product varieties}\label{sec:invariantsProduct}
Let $\var{X}$ and $\var{Y}$ be two complete nonsingular real affine varieties, with chosen base points $x_0\in \var{X}$ and $y_0\in \var{Y}$. In this section we make some general remarks on how the invariants defined in the previous two sections behave on the variety $\var{X}\times \var{Y}$.

Let $F^\star$ be an equivariant cohomology theory. Consider the long exact sequence of the triple $(\var{X}\times \var{Y}, \var{X}\vee \var{Y},(x_0,y_0))$:
\begin{equation*}
	\dots \rightarrow  F^\star(\var{X}\times \var{Y}, \var{X}\vee \var{Y})\rightarrow \widetilde F^\star(\var{X}\times \var{Y})\rightarrow \widetilde F^\star (\var{X}\vee \var{Y})\rightarrow \dots
\end{equation*}
We claim that the map $\widetilde F^\star(\var{X}\times \var{Y})\rightarrow \widetilde F^\star (\var{X}\vee \var{Y})$ has a section. To construct it, first apply the wedge axiom to get an isomorphism
\begin{equation*}
	\widetilde F^\star (\var{X}\vee \var{Y})\cong \widetilde F^\star (\var{X})\oplus \widetilde F^\star(\var{Y}).
\end{equation*}
Now the section can be defined on each of the summands separately and then extended linearly, by
\begin{equation*}
p_1^\ast:\widetilde F^\star (\var{X}) \rightarrow\widetilde F^\star (\var{X}\times \var{Y}), \quad p_2^\ast:\widetilde F^\star (\var{Y}) \rightarrow\widetilde F^\star (\var{X}\times \var{Y}),
\end{equation*}
where $p_1:\var{X}\times \var{Y}\rightarrow \var{X}$ and $p_2:\var{X}\times \var{Y}\rightarrow \var{Y}$ are the projections. This gives rise to the naturally split exact sequence
\begin{equation}\label{eq:productIsoShortEx}
	0 \rightarrow  F^\star(\var{X}\times \var{Y}, \var{X}\vee \var{Y})\rightarrow  \widetilde F^\star(\var{X}\times \var{Y})\rightarrow \widetilde F^\star (\var{X})\oplus \widetilde F^\star(\var{Y})\rightarrow 0,
\end{equation}
and hence to the natural isomorphism
\begin{equation}\label{eq:productIsoTop}
	\widetilde F^\star(\var{X}\times \var{Y})\cong F^\star(\var{X}\times \var{Y}, \var{X}\vee \var{Y})\oplus  \widetilde F^\star (\var{X})\oplus \widetilde F^\star(\var{Y}).
\end{equation}
By the same reasoning, the same conclusion applies if we had started with a (nonequivariant) generalised cohomology theory instead.
\begin{prop}\label{prop:splittings}
Let $F^\star$ be a Real-oriented equivariant cohomology theory. Then, the isomorphism \eqref{eq:productIsoTop} restricts to an isomorphism
\begin{equation*}
	\widetilde F^\star_\C(\var{X}\times \var{Y})\cong F^\star_\C(\var{X}\times \var{Y}, \var{X}\vee \var{Y})\oplus  \widetilde F^\star_\C (\var{X})\oplus \widetilde F^\star_\C(\var{Y}).
\end{equation*}
There is a similar isomorphism
\begin{equation*}
    \widetilde E_\C^\ast(\var{X}\times \var{Y})\cong E_\C^\ast(\var{X}\times \var{Y}, \var{X}\vee \var{Y})\oplus  \widetilde E_\C^\ast (\var{X})\oplus \widetilde E_\C^\ast(\var{Y}).
\end{equation*}
where $E^\ast$ is any complex-oriented cohomology theory. There are also similar isomorphisms with the functors $\widetilde{KO}^0_{\C-\alg},\widetilde{KU}^0_{\C-\alg}$ and $\widetilde{KSp}^0_{\C-\alg}$ in place of $\widetilde F^\star_\C$.
\begin{proof}
We treat only the case of $F^\star_\C$ as all the statements have completely analogous proofs. By the definition of $F^\star_\C(\var{X}\times \var{Y}, \var{X}\vee \var{Y})$, the sequence \eqref{eq:productIsoShortEx} restricts to an exact sequence
\begin{equation*}
	0 \rightarrow  F_\C^\star(\var{X}\times \var{Y}, \var{X}\vee \var{Y})\rightarrow  \widetilde F_\C^\star(\var{X}\times \var{Y})\rightarrow \widetilde F^\star (\var{X})\oplus \widetilde F^\star(\var{Y}).
\end{equation*}
The two components of the last arrow are restrictions of the morphisms
\begin{equation*}
	i_1^\ast:\widetilde F^\star(\var{X}\times \var{Y}) \rightarrow \widetilde F^\star (\var{X}),\quad i_2^\ast:\widetilde F^\star(\var{X}\times \var{Y}) \rightarrow \widetilde F^\star (\var{Y}),
\end{equation*}
where $i_1:\var{X}\rightarrow \var{X}\times \var{Y}$ and $i_2:\var{Y}\rightarrow \var{X}\times \var{Y}$ are the inclusions. Since the inclusions are regular maps, it follows that the image of $\widetilde F_\C^\star(\var{X}\times \var{Y})$ is contained in $\widetilde F^\star_\C (\var{X})\oplus \widetilde F^\star_\C(\var{Y})$. On the other hand, the constructed section of the last arrow in \eqref{eq:productIsoShortEx} is on each summand induced by the respective projection, which is a regular map. Thus, the section maps $\widetilde F^\star_\C (\var{X})\oplus \widetilde F^\star_\C(\var{Y})$ into $\widetilde F_\C^\star(\var{X}\times \var{Y})$. It follows that the sequence
\begin{equation*}
	0 \rightarrow  F_\C^\star(\var{X}\times \var{Y}, \var{X}\vee \var{Y})\rightarrow  \widetilde F_\C^\star(\var{X}\times \var{Y})\rightarrow \widetilde F_\C^\star (\var{X})\oplus \widetilde F_\C^\star(\var{Y})\rightarrow 0
\end{equation*}
is split exact, which gives the desired isomorphism.
\end{proof}
\end{prop}

Now, suppose that $\var{Y}$ is homeomorphic to the $n$-dimensional sphere $\sph^n$. Let $F^\star$ be a Real-oriented equivariant cohomology theory. The inclusion $\var{X}\vee \var{Y}\hookrightarrow \var{X}\times \var{Y}$ is a nonequivariant cofibration. As the spaces are endowed with the trivial involution, it is also a cofibration equivariantly. Hence, we have the chain of isomorphisms
\begin{equation*}
	 F^\star(\var{X}\times \var{Y},\var{X}\vee \var{Y})\xrightarrow{\cong} \widetilde F^\star(\var{X}\wedge \sph^n)\xrightarrow{\cong} \widetilde F^{\star-n}(\var{X}),
\end{equation*}
where the second arrow is the suspension isomorphism. In this setting the following proposition relates $F_\C^\star(\var{X}\times \var{Y})$ with $F^\star_\C(\var{X})$:
\begin{prop}\label{prop:suspensionFC}
The composition of the two isomorphisms restricts to a morphism 
\begin{equation*}
	 F_\C^\star(\var{X}\times \var{Y},\var{X}\vee \var{Y})\rightarrow \widetilde F_\C^{\star-n}(\var{X}).
\end{equation*}
A similar statement is true in the case of a complex-oriented cohomology theory.
\begin{proof}
We treat only the case of a Real-oriented equivariant cohomology theory. Let $\Rvar{X}$ be any nonsingular projective complexification of $\var{X}$ and let $\Rvar{Y}$ be any nonsingular projective complexification of $\var{Y}$. By the same argument as before we can consider the following diagram, whose rows are split exact in a compatible way:
\begin{center}
\begin{tikzcd}[column sep=small]
0 \arrow[r] & {F^\star(\Rvar{X}\times \Rvar{Y},\Rvar{X}\vee \Rvar{Y})} \arrow[d] \arrow[r]   & \widetilde F^\star(\Rvar{X}\times \Rvar{Y}) \arrow[d] \arrow[r] & \widetilde F^\star(\Rvar{X}\vee \Rvar{Y}) \arrow[d] \arrow[r] & 0 \\
0 \arrow[r] & {F^\star(\var{X}\times \var{Y},\var{X}\vee \var{Y})} \arrow[r] & \widetilde F^\star(\var{X}\times \var{Y}) \arrow[r]     & \widetilde F^\star(\var{X}\vee \var{Y}) \arrow[r]     & 0
\end{tikzcd}
\end{center}
It now follows from definition that $F_\C^\star(\var{X}\times \var{Y}, \var{X}\vee \var{Y})$ is equal to the image of the arrow 
\begin{equation*}
    F^\star(\Rvar{X}\times \Rvar{Y},\Rvar{X}\vee \Rvar{Y})\rightarrow F^\star(\var{X}\times \var{Y},\var{X}\vee \var{Y}).
\end{equation*}
The inclusion of $\Rvar{X}\vee \var{Y}$ into $\Rvar{X}\times \var{Y}$ is also an equivariant cofibration, as follows from \cite[Lemma 1.1]{arakiOrientationsTcohomologyTheories1979}. The conclusion follows after considering the following commutative diagram:
\begin{center}
\begin{tikzcd}
{ F^\star(\Rvar{X}\times \Rvar{Y},\Rvar{X}\vee \Rvar{Y})} \arrow[r] \arrow[d]              & { F^\star(\var{X}\times \var{Y},\var{X}\vee \var{Y})} \arrow[dd, "\cong"] \\
{ F^\star(\Rvar{X}\times \var{Y},\Rvar{X}\vee \var{Y})} \arrow[d, "\cong"] \arrow[ru] &                                                               \\
\widetilde F^\star(\Rvar{X}\wedge \sph^n) \arrow[d, "\cong"] \arrow[r]               & \widetilde F^\star(\var{X}\wedge \sph^n) \arrow[d, "\cong"]      \\
\widetilde F^{\star-n}(\Rvar{X}) \arrow[r]                                  & \widetilde F^{\star-n}(\var{X})                                 
\end{tikzcd}
\end{center}
\end{proof}
\end{prop}

To a multiplicative equivariant cohomology theory there is associated the (reduced) cross product 
\begin{equation*}
	\widetilde F^{p+q\tau}(X)\times \widetilde F^{r+s\tau}(Y)\rightarrow \widetilde F^{t+u\tau}(X\wedge Y),
\end{equation*}
where $t:=p+r,\;u:=q+s$. We have the following:
\begin{obs}\label{obs:productsExist}
Let $\var{X}$ and $\var{Y}$ be two complete nonsingular real affine varieties with chosen base points $x_0\in \var{X},y_0\in \var{Y}$. The product
\begin{equation*}
	\widetilde F^{p+q\tau}(\var{X})\times \widetilde F^{r+s\tau}(\var{Y})\rightarrow \widetilde F^{t+u\tau}(\var{X}\wedge \var{Y})\cong F^{t+u\tau}(\var{X}\times \var{Y},\var{X}\vee \var{Y})
\end{equation*}
restricts to a product
\begin{equation*}
	\widetilde F_\C^{p+q\tau}(\var{X})\times \widetilde F_\C^{r+s\tau}(\var{Y})\rightarrow F_\C^{t+u\tau}(\var{X}\times \var{Y},\var{X}\vee \var{Y}).
\end{equation*}
A similar statement is true in the complex-oriented case. By the same construction there arise also similar products
\begin{align*}
	\widetilde {KO}_{\C-\alg}^0(\var{X})\times \widetilde {KO}_{\C-\alg}^0(\var{Y})&\rightarrow {KO}_{\C-\alg}^{0}(\var{X}\times \var{Y},\var{X}\vee \var{Y}),\\
    \widetilde {KU}_{\C-\alg}^0(\var{X})\times \widetilde {KU}_{\C-\alg}^0(\var{Y})&\rightarrow {KU}_{\C-\alg}^{0}(\var{X}\times \var{Y},\var{X}\vee \var{Y}),\\
    \widetilde {KO}_{\C-\alg}^0(\var{X})\times \widetilde {KSp}_{\C-\alg}^0(\var{Y})&\rightarrow {KSp}_{\C-\alg}^{0}(\var{X}\times \var{Y},\var{X}\vee \var{Y}),\\
    \widetilde {KSp}_{\C-\alg}^0(\var{X})\times \widetilde {KSp}_{\C-\alg}^0(\var{Y})&\rightarrow {KO}_{\C-\alg}^{0}(\var{X}\times \var{Y},\var{X}\vee \var{Y}).
\end{align*}
\begin{proof}
Let us start with the case of $\widetilde F_\C^\star$. Due to commutativity of the diagram
\begin{center}
\begin{tikzcd}
\widetilde F^{p+q\tau}(\var{X})\times \widetilde F^{r+s\tau}(\var{Y}) \arrow[d] \arrow[r] & {F^{t+u\tau}(\var{X}\times \var{Y},\var{X}\vee \var{Y})} \arrow[d] \\
F^{p+q\tau}(\var{X}) \times F^{r+s\tau}(\var{Y}) \arrow[r]                                & F^{t+u\tau}(\var{X}\times \var{Y})                          
\end{tikzcd}
\end{center}
it suffices to consider the unreduced cross product instead of the reduced one. Now, the cross product can be presented in terms of the cup product as in the diagram below:
\begin{center}
\begin{tikzcd}
F^{p+q\tau}(\var{X}) \times F^{r+s\tau}(\var{Y}) \arrow[r,"\times"] \arrow[d,"p_1^\ast\times p_2^\ast"]   & F^{t+u\tau}(\var{X}\times \var{Y}) \\
F^{p+q\tau}(\var{X}\times \var{Y}) \times F^{r+s\tau}(\var{X}\times \var{Y}) \arrow[ru,"\smile"] &                             
\end{tikzcd}
\end{center}
where the vertical arrow is induced by the projections
\begin{equation*}
    p_1:\var{X}\times \var{Y}\rightarrow \var{X},\quad p_2:\var{X}\times \var{Y}\rightarrow \var{Y}.
\end{equation*}
The conclusion follows, as the projections are regular and $F^\star_\C(\var{X}\times \var{Y})$ forms a ring under the cup product.

The same argument provides a conclusion in the case of a complex-oriented cohomology theory. The last part also follows from the same argument, keeping in mind Observation \ref{obs:KCalgCup}.
\end{proof}
\end{obs}

We can now provide the proof of Proposition \ref{prop:quaternionCycleClass}:
\begin{proof}[Proof of Proposition \ref{prop:quaternionCycleClass}]
Recall that by $\sph^n$ we denote the unit sphere in $\R^{n+1}$, considered as an affine variety. It is well-known that 
\begin{equation*}
    KSp^0_{\C-\alg}(\sph^4)=KSp^0(\sph^4)    
\end{equation*}
(the generator of $\widetilde{KSp}^0_{\C-\alg}(\sph^4)$ corresponding to $\theta\in KSp^{-4}(\pt)$ can be constructed by endowing the quaternionic projective line $\Hq P^1$ with a structure of a real affine variety and checking that it is isomorphic to $\sph^4$ \cite{bochnakAlgebraicApproximationMappings1987}).

Let $\var{X}$ be a complete nonsingular affine variety with base point $x_0\in \var{X}$. We have the commutative diagram
\begin{center}
\begin{tikzcd}
\widetilde{KSp}_{\C-\alg}^0(\var{X})\times \widetilde{KSp}_{\C-\alg}^0(\sph^4) \arrow[r] \arrow[d] & \widetilde{KSp}^0(\var{X})\times \widetilde{KSp}^0(\sph^4) \arrow[d, "\cong"] \\
{KO_{\C-\alg}^0(\var{X}\times \sph^4,\var{X}\vee \sph^4)} \arrow[d]                                & \widetilde{KSp}^0(\var{X})\times KSp^{-4}(\pt) \arrow[d]                     \\
\widetilde{KO}^{-4}_\C(\var{X}) \arrow[r]                                                             & \widetilde{KO}^{-4}(\var{X})                                                 
\end{tikzcd}
\end{center}
Here, the arrows in the left column are due to Observation \ref{obs:KCalgCup} and Proposition \ref{prop:suspensionFC}. Fixing the element of $\widetilde{KSp}_{\C-\alg}^0(\sph^4)$ corresponding to $\theta \in KSp^{-4}(\pt)$, we see that the image of $\widetilde{KSp}_{\C-\alg}^0(\var{X})$ under the isomorphism
\begin{equation*}
    \widetilde{KSp}^0(\var{X})\xrightarrow{\cong} \widetilde{KO}^{-4}(\var{X})
\end{equation*}
induced by multiplication by $\theta$ lies in $\widetilde{KO}^{-4}_\C(\var{X})$. This proves that a reduced version of Proposition \ref{prop:quaternionCycleClass} is true. The unreduced version then follows from Observation \ref{obs:unreducedVSreduced}.
\end{proof}
\section{Computation of \texorpdfstring{$KU$- and $K\R$-}{KU- and KR-}based invariants for spheres}
We can now start explicit computations of some of the newly defined invariants for spheres. In the upcoming Subsection \ref{sec:computation:spheres} we completely describe the groups $KU_{\C}^p(\sph^n)$ and $KO_{\C}^p(\sph^n)$ for all $n$ and $p$ (recall that in Definition \ref{defi:KOCfromKRC} we introduced the notation $KO_{\C}^p(X):=K\R_{\C}^p(X)$). Then, in Subsection \ref{sec:computation:algebraicKtheory} we compute algebraic K-groups of products of spheres. We remark here that the latter is not needed for applications in the problem of realisation of homotopy classes from products of spheres into spheres by regular maps. Therefore, a reader interested only in these applications might safely skip the rather computational Subsection \ref{sec:computation:algebraicKtheory}.

\subsection{The algebras $KO_{\C}^\ast(\sph^n)$ and $KU_{\C}^\ast(\sph^n)$}\label{sec:computation:spheres}
Before we begin the computation let us give a brief summary of our approach. We start by noticing that the generators of the algebras $KU_{\C}^\ast(\R P^n),KO_{\C}^\ast(\R P^n)$ can be easily given explicitly, using the fact that the real projective space $\R P^n$ admits the well-behaved complexification $\C P^n$. Then, using the fact that the map contracting $\R P^{n-1}$ to a point $\psi_n:\R P^n \rightarrow \sph^n$ is regular we deduce that 
\begin{align*}
    \psi_n^\ast( KO_{\C}^\ast(\sph^n))&\subset KO_{\C}^\ast(\R P^n),\\
    \psi_n^\ast( KU_{\C}^\ast(\sph^n))&\subset KU_{\C}^\ast(\R P^n).
\end{align*}
This gives an upper bound for how large the subalgebras $KO_{\C}^\ast(\sph^n)$ and $KU_{\C}^\ast(\sph^n)$ of $KO^\ast(\sph^n)$ and $KU^\ast(\sph^n)$ respectively can be. To obtain a lower bound we make use of the inclusions from Proposition \ref{prop:KCalgInKC} and the description of the algebraic K-groups of rings of regular functions of spheres, which follows from the work of Swan \cite{swanTopologicalExamplesProjective1977}. 

Let us then begin by considering real projective spaces. As noted in the previous paragraph, the real affine variety $\R P^n$ admits the rather simple nonsingular projective complexification $\C P^n$. For a Real-oriented equivariant cohomology theory $F^\star$, and for a complex-oriented cohomology theory $E^\ast$, the rings $F^\star(\C P^n)$ and $E^\ast(\C P^n)$ are well understood:
\begin{thm}[{\cite{adamsStableHomotopyGeneralised1974,arakiOrientationsTcohomologyTheories1979}}]
The $F^\star(\pt)$-algebra $F^\star(\C P^n)$ is isomorphic to
\begin{equation*}
    F^\star(\pt)[u]/(u^{n+1}),
\end{equation*}
where $u$ is the pullback of the Real orientation $c_1^F\in \widetilde F^{1+\tau}(\C P^{\infty})\subset F^{1+\tau}(\C P^{\infty})$ through the inclusion $\C P^n\hookrightarrow \C P^{\infty}$.

Similarly, the $E^\ast(\pt)$-algebra $E^\ast(\C P^n)$ is isomorphic to
\begin{equation*}
    E^\ast(\pt)[v]/(v^{n+1}),
\end{equation*}
where $v$ is the pullback of the complex orientation $c_1^E\in \widetilde E^2(\C P^{\infty})\subset E^2(\C P^{\infty}) $ through the inclusion $\C P^n\hookrightarrow \C P^{\infty}$.
\end{thm}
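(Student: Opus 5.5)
The plan is to compute $F^\star(\C P^n)$ with the Atiyah--Hirzebruch-type argument that uses the equivariant CW filtration
\begin{equation*}
    \pt=\C P^0\subset \C P^1\subset\dots\subset \C P^n ,
\end{equation*}
and to induct on $n$. The key geometric input is that the cofibre of $\C P^{k-1}\hookrightarrow \C P^k$ is the Thom space of the tautological-type Real bundle over a point, i.e.
\begin{equation*}
    \C P^k/\C P^{k-1}\cong \sph^{k+k\tau}.
\end{equation*}
This holds because the complement $\C P^k\setminus \C P^{k-1}$ is the affine chart $\C^k$ with the conjugation, which is $\R^{k+k\tau}$ as a Real space. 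Hence, by the suspension axiom,
\begin{equation*}
    \widetilde F^\star(\C P^k/\C P^{k-1})\cong F^{\star-k-k\tau}(\pt),
\end{equation*}
which is a free $F^\star(\pt)$-module on one generator in degree $k+k\tau$.

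Next, I will show that $u^k$ restricts to zero on $\C P^{k-1}$ and lifts to a generator of $F^\star(\C P^k,\C P^{k-1})$. For this, write $u^k$ as the top Chern class, namely the Euler class of $\mathcal O(1)^{\oplus k}$ pulled back from the Real orientation. The $k$ sections given by the coordinates $x_0,\dots,x_{k-1}$ have no common zero on $\C P^{k-1}$ but vanish exactly at the point $[0:\dots:0:1]$. From this, $u^k$ lifts to the relative group supported near that point, and there it agrees with the Thom class of $\C^k\cong\R^{k+k\tau}$. The normalisation of $c_1^F$ in the definition of a Real orientation gives exactly the case $k=1$, and the multiplicativity of Thom classes (\cite[Theorem 2.25]{huRealorientedHomotopyTheory2001}) gives the general case.

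With this, the long exact sequence of the pair $(\C P^k,\C P^{k-1})$ splits. The restriction map $F^\star(\C P^k)\to F^\star(\C P^{k-1})$ is surjective, since by induction $F^\star(\C P^{k-1})$ is generated as a module by $1,u,\dots,u^{k-1}$, and these are restrictions of global classes. The relative group injects, since its generator maps to $u^k$ and connecting maps vanish by surjectivity. So $F^\star(\C P^k)$ is free on $1,\dots,u^k$. Taking $k=n+1$ on $\C P^{n+1}$ and restricting shows $u^{n+1}=0$ on $\C P^n$: the restriction $u^{n+1}|_{\C P^n}$ is the image of the relative class in $F^\star(\C P^{n+1},\C P^n)$ under the map to $F^\star(\C P^{n+1})$ composed with restriction to $\C P^n$, and that composite vanishes. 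This gives the ring isomorphism. The complex-oriented case is identical with $\sph^{2k}$ in place of $\sph^{k+k\tau}$.

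The main obstacle is the identification of $u^k$ with the relative Thom generator. This needs the Thom class of a direct sum to be the product of Thom classes, and the Euler class to be the restriction of the Thom class along the zero section, in the equivariant setting. I would take both from \cite{arakiOrientationsTcohomologyTheories1979}, or alternatively argue via the multiplicative structure directly on $(\C P^1)^k\to\C P^k$.
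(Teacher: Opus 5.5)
The paper does not prove this statement; it quotes it from \cite{adamsStableHomotopyGeneralised1974,arakiOrientationsTcohomologyTheories1979}. Your argument is correct, and it is the cellular-induction proof rather than Adams's.

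Adams runs the multiplicative Atiyah--Hirzebruch spectral sequence $H^\ast(\C P^n;E^\ast(\pt))\Rightarrow E^\ast(\C P^n)$. Its $E_2$-page $E^\ast(\pt)[x]/(x^{n+1})$ is generated by permanent cycles, because $x$ is represented by $c_1^E$. So the spectral sequence collapses, freeness on $1,\dots,(c_1^E)^n$ follows, and $(c_1^E)^{n+1}=0$ is forced by filtration. That is the quickest route nonequivariantly. In the Real setting its role is played by the filtration by Real cells $D^{k+k\tau}$, which is exactly your induction. Your geometric identification of $u^k$ with the top-cell generator makes the one nontrivial step explicit: you take the Euler class of $\mathcal O(1)^{\oplus k}$ and localize it at the common zero of the coordinate sections. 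It also gives $u^{n+1}\vert_{\C P^n}=0$ directly.

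There are two points to tighten.

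First, do not take the rank-$k$ Thom classes from \cite[Theorem 2.25]{huRealorientedHomotopyTheory2001} or from Araki's Euler classes. As the paper itself describes them, they are pulled back from the universal bundle over $B\mathrm U(k)$, whose cohomology is computed from $F^\star(\C P^\infty)$. Using them here is therefore circular.

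You only need Thom classes of sums of line bundles, and these are elementary. Equivariantly $\Th(\mathcal O(1)\to\C P^k)\cong\C P^{k+1}$, with the zero section being the hyperplane inclusion. So $c_1^F\vert_{\C P^{k+1}}$ is a Thom class for $\mathcal O(1)$ whose Euler class is $u$. For $\mathcal O(1)^{\oplus k}$, take the relative cup product of the $k$ pulled-back classes. Multiplicativity is then a definition, and the fibre normalization is the external product of $k$ copies of the $\C P^1$ normalization.

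If your convention instead makes $u$ the Euler class of the tautological bundle, use the Real sections $\ell\mapsto\mathrm{pr}_\ell(e_i)$. The local model becomes $z\mapsto\bar z$, an equivariant self-equivalence of $(\R^{k+k\tau},\R^{k+k\tau}\setminus 0)$. You still get a unit multiple of the generator, which is all the induction needs.

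Second, the fallback via $(\C P^1)^k\to\C P^k$ does not replace this step if you mean the map multiplying linear forms. That map has degree $k!$, so it cannot identify $u^k$ with a generator unless $k!$ is invertible in $F^\star(\pt)$.
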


From the definition, we get
\begin{cor}
As an $F^\star(\pt)$-algebra with unity, $F_\C^\star(\R P^n)$ is generated by the pullback $u':=i^\ast u\in F^{1+\tau}(\R P^n)$ through the inclusion $\R P^n\hookrightarrow \C P^{\infty}$.

As an $E^\ast(\pt)$-algebra with unity, $E_\C^\ast(\R P^n)$ is generated by the pullback $v':=i^\ast v\in E^{2}(\R P^{n})$.
\end{cor}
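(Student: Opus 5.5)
The plan is to unwind the definitions of $F^\star_\C$ and $E^\ast_\C$ from Corollary \ref{cor:inclusion}, applied to the specific complexification $\Rvar X=\C P^n$ of $\R P^n$.

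First, check that $\C P^n$ with complex conjugation is a legitimate nonsingular projective complexification of $\R P^n$. It is a nonsingular projective $\R$-variety. Its real locus is $\R P^n$, which is Zariski dense in $\C P^n(\C)$, because a polynomial with complex coefficients vanishing on $\R^n$ vanishes identically. Hence, by Corollary \ref{cor:inclusion},
\[F^\star_\C(\R P^n)=i^\ast\bigl(F^\star(\C P^n)\bigr),\]
where $i:\R P^n\hookrightarrow \C P^n$ is the inclusion and $\R P^n$ carries the trivial involution.

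Second, feed in the theorem quoted just above: $F^\star(\C P^n)\cong F^\star(\pt)[u]/(u^{n+1})$. Every element of $F^\star(\C P^n)$ is therefore a polynomial $\sum_k a_k u^k$ with $a_k\in F^\star(\pt)$. The map $i^\ast$ is a homomorphism of $F^\star(\pt)$-algebras with unity: it is induced by an equivariant map and respects cup products and the module structure over the coefficients pulled back from the point. Consequently
\[i^\ast\Bigl(\sum_k a_k u^k\Bigr)=\sum_k a_k (i^\ast u)^k .\]
So the image is exactly the $F^\star(\pt)$-subalgebra with unity generated by $i^\ast u$. Since $u$ is itself pulled back from $\C P^\infty$, the element $i^\ast u$ coincides with the pullback of $c_1^F$ along the composite $\R P^n\hookrightarrow \C P^n\hookrightarrow \C P^\infty$, which is the element $u'$ of the statement.

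The complex-oriented case is handled identically, using $E^\ast(\C P^n)\cong E^\ast(\pt)[v]/(v^{n+1})$ and the ordinary inclusion $\R P^n\hookrightarrow \C P^n(\C)$.

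No step is a real obstacle; the argument is essentially formal. The only points needing care are the Zariski density check and the observation that $i^\ast$ is compatible with the $F^\star(\pt)$-module structure (respectively the $E^\ast(\pt)$-module structure), and both are routine.
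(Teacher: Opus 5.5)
Your proposal is correct and matches the paper's approach. The paper simply says the corollary follows ``from the definition'': one takes the complexification $\C P^n$ of $\R P^n$ and pulls back the polynomial description $F^\star(\pt)[u]/(u^{n+1})$ (respectively $E^\ast(\pt)[v]/(v^{n+1})$) along the inclusion. Your extra checks, namely Zariski density, $F^\star(\pt)$-linearity of $i^\ast$, and identifying $i^\ast u$ with the pullback of $c_1^F$ along $\R P^n\hookrightarrow\C P^\infty$, just spell out what the paper leaves implicit.
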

This does not give us a full description of $F^\star_\C(\R P^n)$ and $E^\ast_\C(\R P^n)$, since we do not know what relations are the elements $u'$ and $v'$ subject to. 

In the case of K-theory we deduce the following:
\begin{obs}\label{obs:KCVanishesRP}
$KU^{p}_\C(\R P^n)=0$ if $p$ is odd. $K\R^{p+q\tau}_\C(\R P^n)=0$ if $p-q$ is congruent to $1,2,3$ or $5$ modulo $8$.
\begin{proof}
The first part follows from the fact that $KU^\ast_\C(\R P^n)$ is generated over $\Z$ by the elements $\beta,\beta^{-1},v'$ of degrees $-2,2,2$ respectively.

The second part follows since $K\R^\star_\C(\R P^n)$ is generated over $KO^\ast(\pt)$ by the elements $\sigma,\sigma^{-1},u'$ of degrees $-1-\tau,1+\tau,1+\tau$ respectively, and $KO^{p}(\pt)$ is zero if $p$ is congruent to $1,2,3$ or $5$ modulo $8$.
\end{proof}
\end{obs}
This is useful for us due to the following corollary:
\begin{cor}\label{cor:vanishingKCifInj}
Fix $n\geq 0$ and $p\in \Z$, such that $p$ is congruent to $1,2,3$ or $5$ modulo $8$. Consider the morphism
\begin{equation*}
    \psi_n^\ast:\widetilde{KO}^p(\sph^n)\rightarrow \widetilde{KO}^p(\R P^n)
\end{equation*}
induced by the real blowup $\psi_n:\R P^n\rightarrow \sph^n$ collapsing $\R P^{n-1}$ to a point. Assume that it is injective. Then, $\widetilde{KO}_\C^p(\sph^n)=0$.

Similarly, let $q\in \Z$ be an odd integer. Assume that the induced morphism
\begin{equation*}
    \psi_n^\ast:\widetilde{KU}^q(\sph^n)\rightarrow \widetilde{KU}^q(\R P^n)
\end{equation*}
is injective. Then, $\widetilde{KU}_\C^q(\sph^n)=0$.
\begin{proof}
The map $\psi_n$ is regular. It follows from Corollary \ref{cor:inclusion} that 
\begin{equation*}
    \psi_n^\ast(\widetilde{KO}_\C^p(\sph^n))\subset \widetilde{KO}_\C^p(\R P^n).
\end{equation*}
Since $\psi_n^\ast$ is injective by assumption and $\widetilde{KO}_\C^p(\R P^n)=0$ thanks to Observation \ref{obs:KCVanishesRP}, we get $\widetilde{KO}_\C^p(\sph^n)=0$. The case of unitary K-theory is analogous.
\end{proof}
\end{cor}
Injectivity of $\psi_n^\ast$ can be verified for particular values of $n$ and $p$ of interest, leading to the following:
\begin{prop}\label{prop:KOCKUCvanishing}
The group $\widetilde{KU}_\C^p(\sph^n)$ is zero if both $n$ and $p$ are odd.

The group $\widetilde{KO}_\C^p(\sph^n)$ is zero in each of the following cases:
\begin{enumerate}
    \item $n$ congruent to $1$ modulo $4$ and $p$ congruent to $n$ modulo $8$,  \label{vanishingp1}
    \item $n$ congruent to $2$ modulo $4$ and $p$ congruent to $n-1$ modulo $8$,    \label{vanishingp2}
    \item $n$ congruent to $3$ modulo $4$ and $p$ congruent to $3$ modulo $8$,\label{vanishingp3}
    \item $n$ congruent to $3$ modulo $4$ and $p$ congruent to $n-2$ modulo $8$. \label{vanishingp4}
\end{enumerate}
\begin{proof}
Let us consider the first part. Consider the long exact sequence of the pair $(\R P^n,\R P^{n-1})$:
\begin{equation*}
    \dots\rightarrow \widetilde{KU}^{p-1}(\R P^{n-1})\rightarrow \widetilde{KU}^{p}(\sph^n)\xrightarrow{\psi_n^\ast} \widetilde{KU}^{p}(\R P^n)\rightarrow \dots
\end{equation*}
We have $\widetilde{KU}^{p}(\sph^n)\cong \Z$, while $\widetilde{KU}^{p-1}(\R P^{n-1})$ is torsion according to \cite[Proposition II.2.7.7]{atiyahKtheory1967}. This shows that $\psi_n^\ast$ is injective, so the conclusion follows from Corollary \ref{cor:vanishingKCifInj}.

The second part follows similarly by considering the long exact sequence 
\begin{equation}\label{exactseq1}
    \dots\rightarrow \widetilde{KO}^{p-1}(\R P^{n-1})\rightarrow \widetilde{KO}^{p}(\sph^n)\xrightarrow{\psi_n^\ast} \widetilde{KO}^{p}(\R P^n)\rightarrow \dots
\end{equation}
After plugging in the groups $\widetilde{KO}^{\ast}(\R P^n),\widetilde{KO}^{\ast}(\R P^{n-1})$, which were computed in \cite{fujiiK0groupsProjectiveSpaces1967}, one concludes that $\psi_n^\ast$ is injective in these cases. 

More explicitly, the cases \eqref{vanishingp1} and \eqref{vanishingp3} follow as $\widetilde{KO}^{p}(\sph^n)\cong \Z$ while $\widetilde{KO}^{p-1}(\R P^{n-1})$ is torsion. The case \eqref{vanishingp2} follows as the three terms of the exact sequence \eqref{exactseq1} starting from $\widetilde{KO}^{p}(\sph^n)$ look like
\begin{center}
\begin{tikzcd}
\widetilde{KO}^{p}(\sph^n) \arrow[r] \arrow[d, "\cong"] &  \widetilde{KO}^{p}(\R P^n) \arrow[r] \arrow[d, "\cong"] &  \widetilde{KO}^{p}(\R P^{n-1}) \arrow[d, "\cong"] \\
\Z/2 \arrow[r]                                          & \Z/2 \arrow[r]                                           & \Z                                                
\end{tikzcd}
\end{center}
The remaining case \eqref{vanishingp4} follows as these three terms look like
\begin{center}
\begin{tikzcd}
\widetilde{KO}^{p}(\sph^n) \arrow[r] \arrow[d, "\cong"] &  \widetilde{KO}^{p}(\R P^n) \arrow[r] \arrow[d, "\cong"] &  \widetilde{KO}^{p}(\R P^{n-1}) \arrow[d, "\cong"] \\
\Z/2 \arrow[r]                                          & \Z/2\oplus \Z/2 \arrow[r]                                & \Z/2                                              
\end{tikzcd}
\end{center}
\end{proof}
\end{prop}

This gives an upper bound for how large the groups $\widetilde{KO}_\C^\ast(\sph^n)$ and $\widetilde{KU}_\C^\ast(\sph^n)$ can be. A lower bound is a consequence of the following classical result:
\begin{thm}[{\cite[Theorem 11.1]{swanTopologicalExamplesProjective1977}}]\label{thm:KCalgSphere}
We have 
\begin{align*}
    \widetilde{KU}^0_{\C-\alg}(\sph^n)&=\widetilde{KU}^0(\sph^n),\\
    \widetilde{KO}^0_{\C-\alg}(\sph^n)&=\widetilde{KO}^0(\sph^n),\\
    \widetilde{KSp}^0_{\C-\alg}(\sph^n)&=\widetilde{KSp}^0(\sph^n)
\end{align*}
for all $n\geq 0$.
\end{thm}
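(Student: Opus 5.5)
The plan is to write down explicit polynomial idempotents on $\sph^n$ that represent generators of the reduced topological K-groups. Since every such idempotent matrix with regular entries defines a finitely generated projective module over $\RR(\sph^n)$, $\RR(\sph^n,\C)$ or $\RR(\sph^n,\Hq)$, its class lies in $KO^0_{\C-\alg}$, $KU^0_{\C-\alg}$ or $KSp^0_{\C-\alg}$ respectively. By Bott periodicity each group $\widetilde{KO}^0(\sph^n)$, $\widetilde{KU}^0(\sph^n)$, $\widetilde{KSp}^0(\sph^n)$ is cyclic or zero, and the $\C$-algebraic subgroups are subgroups. So it suffices to show that a single generator is in the image, passing to reduced groups via Observation \ref{obs:reducedKR}.

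\textbf{Construction.} Fix $\K\in\{\R,\C,\Hq\}$. Choose a $\K$-module $V$ with $n+1$ $\K$-linear, self-adjoint, pairwise anticommuting endomorphisms $E_0,\dots,E_n$ with $E_i^2=1$, i.e.\ a module over the appropriate Clifford algebra taken with the correct signature. For $x=(x_0,\dots,x_n)\in\sph^n$ put $A(x):=\sum_i x_iE_i$. Then $A(x)^2=\sum_i x_i^2=1$, so
\begin{equation*}
    P(x):=\tfrac12\bigl(1+A(x)\bigr)
\end{equation*}
is a self-adjoint idempotent whose entries are polynomial in $x$. Its image $\xi_V$ is therefore an algebraic $\K$-vector bundle on $\sph^n$. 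Restricted to the two hemispheres, $\xi_V$ is trivialised by the $(+1)$-eigenspaces of $E_0$ and of $-E_0$ respectively. The clutching function on the equator $\sph^{n-1}$ is then $y\mapsto\sum_{i\ge1}y_iE_iE_0$ restricted to the $+1$-eigenspace of $E_0$, which is exactly the Atiyah--Bott--Shapiro clutching map attached to the underlying $\mathrm{Cl}_{n}$-module.

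\textbf{Generator check.} By the Atiyah--Bott--Shapiro theorem, the map sending a graded Clifford module to the class of its clutched bundle induces isomorphisms
\begin{equation*}
    M_n/i^\ast M_{n+1}\cong\widetilde{KO}^0(\sph^n),
\end{equation*}
and similarly in the complex and quaternionic cases. Hence, if $V$ is taken to be an irreducible module (extended to the required grading), $[\xi_V]-\dim V/2$ generates the reduced group. This step is the main obstacle. One has to match, for each residue of $n$ modulo $8$ and each $\K$, the irreducible Clifford module with the correct signature, and check that the idempotent picture agrees with the ABS clutching convention. The torsion cases $\widetilde{KO}^0(\sph^n)=\Z/2$ for $n\equiv1,2\pmod 8$ (and the corresponding $KSp$ cases) are where this needs the most care. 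Here I would verify directly that the resulting clutching map is the nontrivial element of $\pi_{n-1}(\mathrm{O})$. For $n=1,2$ it is the Möbius bundle and the realified Hopf bundle, and the remaining cases follow by applying Bott periodicity through the $\lambda$-multiplication, realised by tensoring with the $\sph^8$ Clifford module. For $\Hq$ one alternatively uses $KSp\cong KO^{\ast-4}$ from Propositions \ref{prop:KOtoKSp} and \ref{prop:KSptoKO}.

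With generators in the image, closure under addition gives the equalities for all $n\ge0$. The case $n=0$ is trivial, since $\sph^0$ is two points and every bundle over it is algebraic.
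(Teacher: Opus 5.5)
Your argument is correct. The paper gives no proof here; it only cites Swan. What you propose is the standard Clifford-module proof of this result: the polynomial idempotent $\tfrac12\bigl(1+\sum_i x_iE_i\bigr)$ realises the Atiyah--Bott--Shapiro clutching construction by a projective module over $\RR(\sph^n,\K)$.

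Two remarks.

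First, the step you call the main obstacle can be bypassed. Set $e_i:=E_iE_0$ for $i\geq 1$ and grade $V$ by the eigenspaces of $E_0$. Then your families $(E_0,\dots,E_n)$ are exactly graded modules over the Clifford algebra $C_n$, with the $e_i$ odd and $e_i^2=-1$. Conversely, every graded $C_n$-module arises this way, taking $E_0$ to be the grading operator and $E_i:=e_iE_0$. The ABS theorem says that the additive map $M\mapsto[\xi_M]-\dim M^0$ from the Grothendieck group of graded $C_n$-modules to $\widetilde{KO}^0(\sph^n)$ is surjective. The sign convention for the clutching is irrelevant for this. Every $\xi_M$ is given by a polynomial idempotent, and $\widetilde{KO}^0_{\C-\alg}(\sph^n)$ is a subgroup, so equality follows at once. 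You need no residue-by-residue matching of irreducible modules to generators, no separate treatment of the torsion cases, and no periodicity argument; the complex case is identical.

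Second, for $\K=\Hq$ you should use the analogous ABS isomorphism for $\Hq$-linear graded Clifford modules (graded $C_n\otimes_\R\Hq$-modules) directly. The isomorphism $KSp^\ast\cong KO^{\ast-4}$ is purely topological. It identifies $\widetilde{KSp}^0(\sph^n)$ with $\widetilde{KO}^0(\sph^{n+4})$, but nothing transports algebraic bundles from $\sph^{n+4}$ back to $\sph^n$. Indeed, the paper's own $\theta$-argument (Proposition \ref{prop:quaternionCycleClass}) takes the quaternionic $\sph^4$ case of this very statement as input. So that alternative can only help you identify the target group, not realise its elements.
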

From Propositions \ref{prop:KCalgInKC} and \ref{prop:quaternionCycleClass} we obtain the following:
\begin{cor}\label{cor:KCsphereFromKCalg}
We have 
\begin{align*}
    \widetilde{KU}^0_{\C}(\sph^n)&=\widetilde{KU}^0(\sph^n),\\
    \widetilde{KO}^0_{\C}(\sph^n)&=\widetilde{KO}^0(\sph^n),\\
    \widetilde{KO}^{-4}_{\C}(\sph^n)&=\widetilde{KO}^{-4}(\sph^n)
\end{align*}
for all $n\geq 0$.
\end{cor}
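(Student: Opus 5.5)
The plan is a sandwich argument for each of the three equalities: the $\C$-algebraic subgroup sits inside $\widetilde{KU}^0_\C$ (resp. $\widetilde{KO}^0_\C$, $\widetilde{KO}^{-4}_\C$), which in turn sits inside the full topological group, while Theorem \ref{thm:KCalgSphere} says the smallest of the three is already everything.

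\textbf{Unitary and orthogonal cases.} Proposition \ref{prop:KCalgInKC} gives the unreduced inclusions
\begin{equation*}
KU^0_{\C-\alg}(\sph^n)\subset KU^0_\C(\sph^n),\qquad KO^0_{\C-\alg}(\sph^n)\subset KO^0_\C(\sph^n).
\end{equation*}
To pass to reduced groups, fix a base point $x_0\in\sph^n$. By definition $\widetilde{KO}^0_{\C-\alg}(\sph^n)=(\iota^\ast)^{-1}(KO^0_{\C-\alg}(\sph^n))$ and $\widetilde{KO}^0_\C(\sph^n)=(\iota^\ast)^{-1}(KO^0_\C(\sph^n))$, where $\iota^\ast\colon KO^0(\sph^n,x_0)\to KO^0(\sph^n)$. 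Taking preimages under $\iota^\ast$ preserves the inclusion, so
\begin{equation*}
\widetilde{KO}^0_{\C-\alg}(\sph^n)\subset \widetilde{KO}^0_\C(\sph^n)\subset \widetilde{KO}^0(\sph^n).
\end{equation*}
Theorem \ref{thm:KCalgSphere} says the left end equals the right end, so all three are equal. The $KU$ case is identical.

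\textbf{Quaternionic case.} Proposition \ref{prop:quaternionCycleClass} says the isomorphism $KSp^0(\sph^n)\xrightarrow{\cong}KO^{-4}(\sph^n)$ (multiplication by $\theta$) carries $KSp^0_{\C-\alg}(\sph^n)$ into $KO^{-4}_\C(\sph^n)$. Its proof actually establishes the reduced statement: the reduced isomorphism $\widetilde{KSp}^0(\sph^n)\cong\widetilde{KO}^{-4}(\sph^n)$ maps $\widetilde{KSp}^0_{\C-\alg}(\sph^n)$ into $\widetilde{KO}^{-4}_\C(\sph^n)$. One can also recover this from the unreduced version, since multiplication by $\theta$ is natural and so commutes with $\iota^\ast$. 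By Theorem \ref{thm:KCalgSphere}, $\widetilde{KSp}^0_{\C-\alg}(\sph^n)$ is all of $\widetilde{KSp}^0(\sph^n)$. Its image under the isomorphism is therefore all of $\widetilde{KO}^{-4}(\sph^n)$, which forces $\widetilde{KO}^{-4}_\C(\sph^n)=\widetilde{KO}^{-4}(\sph^n)$.

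\textbf{Where care is needed.} There is no real obstacle. The only point to check is the compatibility of the reduced and unreduced versions, which follows either from the preimage definition used above or from the splittings in Observations \ref{obs:unreducedVSreduced} and \ref{obs:reducedKR}. One also needs $KO^0(\sph^n)$ identified with $K\R^0$ of $\sph^n$ equipped with the trivial involution, as in Definition \ref{defi:KOCfromKRC}.
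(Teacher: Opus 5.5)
Your proposal is correct and matches the paper's argument: combine Theorem \ref{thm:KCalgSphere} with the inclusions of Propositions \ref{prop:KCalgInKC} and \ref{prop:quaternionCycleClass}. Your handling of the passage from unreduced to reduced groups, via preimages under $\iota^\ast$ and the naturality of multiplication by $\theta$, spells out what the paper leaves implicit.
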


We can now begin computation, starting from unitary K-theory:
\begin{thm}\label{thm:KUCSphere}
For $n\geq 0$, we have
\begin{equation*}
    \widetilde{KU}^\ast_\C(\sph^n)=\begin{cases}
        \widetilde{KU}^\ast(\sph^n) & \text{if $n$ is even,}\\
        0 & \text{if $n$ is odd.}
    \end{cases}
\end{equation*}
\begin{proof}
Suppose that $n$ is odd. From Proposition \ref{prop:KOCKUCvanishing} we get $\widetilde{KU}_\C^p(\sph^n)=0$ for $p$ odd. For $p$ even, one has 
\begin{equation*}
    \widetilde{KU}_\C^p(\sph^n)\subset\widetilde{KU}^p(\sph^n)\cong KU^{p-n}(\pt)\cong 0,
\end{equation*}
so $\widetilde{KU}_\C^p(\sph^n)=0$ for all $p$ as claimed.

Suppose that $n$ is even. From Corollary \ref{cor:KCsphereFromKCalg} we get $\widetilde{KU}^0_\C(\sph^n)=\widetilde{KU}^0(\sph^n)$. Since $KU^0_\C(\sph^n)$ is a $KU^\ast(\pt)$-algebra and $KU^\ast(\pt)$ contains the invertible element $\beta$ of degree $-2$, it follows that
\begin{equation}\label{KUCSphereCompEq1}
    \widetilde{KU}^p_\C(\sph^n)=\widetilde{KU}^p(\sph^n)
\end{equation}
for all $p$ even. For $p$ odd we have $\widetilde{KU}^p(\sph^n)\cong KU^{p-n}(\pt)=0$, so \eqref{KUCSphereCompEq1} trivially holds true in this case as well.
\end{proof}
\end{thm}

To describe our results in the case of Real K-theory, we introduce the following notation:
\begin{defi}\label{defi:In}
For $p,n\in \Z,n\geq 0$ denote by $I^n_{(p)}\subset KO^p(\pt)$ the image of $\widetilde{KO}_\C^{n+p}(\sph^n)$ under the suspension isomorphism
\begin{equation*}
    \widetilde{KO}^{n+p}(\sph^n)\xrightarrow{\cong}\widetilde{KO}^p(\sph^0)\xrightarrow{\cong} KO^p(\pt).
\end{equation*}
Since $\widetilde{KO}_\C^{\ast}(\sph^n)$ is a graded $KO^\ast(\pt)$-module, the sum
\begin{equation*}
    \bigoplus_{p=-\infty}^\infty I^n_{(p)}\subset KO^\ast(\pt)
\end{equation*}
is a homogeneous ideal in the ring $KO^\ast(\pt)$, which we denote by $I^n$. 
\end{defi}
The following result describes the $KO^\ast(\pt)$-algebra $KO^\ast_\C(\sph^n)$ completely:
\begin{thm}\label{thm:KOCSphere}
The ideal $I^n$ is described as follows:
\begin{enumerate}
    \item $I^n=(1)$ if $n$ is congruent to $0$ modulo $4$,
    \item $I^n=(\eta)$ if $n$ is congruent to $1$ modulo $4$,
    \item $I^n=(2,\eta^2,\alpha)$ if $n$ is congruent to $2$ modulo $4$,
    \item $I^n=(0)$ if $n$ is congruent to $3$ modulo $4$.
\end{enumerate}
\end{thm}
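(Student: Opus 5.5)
The plan is to squeeze the homogeneous ideal $I^n\subset KO^\ast(\pt)$ between a lower bound and an upper bound. The lower bound comes from Corollary \ref{cor:KCsphereFromKCalg} together with Observation \ref{obs:realificationKC} and Theorem \ref{thm:KUCSphere}. The upper bound comes from the vanishing results of Proposition \ref{prop:KOCKUCvanishing}. In the language of Definition \ref{defi:In}, Corollary \ref{cor:KCsphereFromKCalg} says that $I^n_{(-n)}=KO^{-n}(\pt)$ and $I^n_{(-n-4)}=KO^{-n-4}(\pt)$. When $n$ is even, Theorem \ref{thm:KUCSphere} gives $\widetilde{KU}^\ast_\C(\sph^n)=\widetilde{KU}^\ast(\sph^n)$. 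Realification commutes with suspension, so Observation \ref{obs:realificationKC} and Proposition \ref{prop:realificationImage} put the whole ideal $(2,\eta^2,\alpha)$ inside $I^n$. Proposition \ref{prop:KOCKUCvanishing} translates as follows: $\widetilde{KO}^{p}_\C(\sph^n)=0$ means $I^n_{(p-n)}=0$.

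The upper-bound step rests on one algebraic fact about $KO^\ast(\pt)$. If a homogeneous ideal $I$ has $I_{(d)}=0$ for some $d\equiv 0\pmod 4$, then $I$ vanishes in \emph{all} degrees $\equiv 0\pmod 4$. The reason is that the nonzero elements there are integer multiples of $\lambda^k$ or $\alpha\lambda^k$, and multiplying by $\alpha$ or $\lambda^{\pm1}$ moves such an element to degree $d$ without killing it, using $\alpha^2=4\lambda$ and the torsion-freeness of those degrees. I will verify this by inspection of Proposition \ref{prop:coefficientRings}. Then I treat $n$ modulo $8$.

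\emph{$n\equiv 0\pmod 4$.} If $n\equiv0\pmod 8$, then $KO^{-n}(\pt)$ is generated by the unit $\lambda^{-n/8}$. If $n\equiv4\pmod 8$, then $KO^{-n-4}(\pt)$ is generated by a power of $\lambda$. Either way $I^n$ contains a unit, so $I^n=(1)$.

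\emph{$n\equiv1\pmod 4$.} Either $KO^{-n}(\pt)$ (for $n\equiv1\pmod8$) or $KO^{-n-4}(\pt)$ (for $n\equiv5\pmod8$) is generated by $\eta\lambda^k$, so $\eta\in I^n$. For the reverse inclusion, case \eqref{vanishingp1} gives $I^n_{(0)}=0$. By the fact above, $I^n$ vanishes in all degrees $\equiv0\pmod4$. The only remaining nonzero degrees of $KO^\ast(\pt)$ are those of $\eta\lambda^k$ and $\eta^2\lambda^k$, all of which lie in $(\eta)$. Hence $I^n=(\eta)$.

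\emph{$n\equiv2\pmod 4$.} We already have $(2,\eta^2,\alpha)\subset I^n$. Case \eqref{vanishingp2} gives $I^n_{(-1)}=0$, so $\eta\notin I^n$. Consequently no odd multiple of $\lambda^k$ lies in $I^n$, since multiplying it by $\eta\lambda^{-k}$ would produce $\eta$. Degree by degree, $(2,\eta^2,\alpha)$ already exhausts everything except odd multiples of $\lambda^k$ and the elements $\eta\lambda^k$. Both of these are excluded, so equality holds.

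\emph{$n\equiv3\pmod4$.} Cases \eqref{vanishingp3} and \eqref{vanishingp4} give $I^n_{(3-n)}=0$ and $I^n_{(-2)}=0$. Here $3-n\equiv0$ or $4\pmod 8$, so the fact above kills all degrees $\equiv0\pmod4$. $I^n_{(-2)}=0$ excludes $\eta^2$, hence excludes every $\eta^2\lambda^k$. It also excludes every $\eta\lambda^k$, because multiplying by $\eta\lambda^{-k}$ would give $\eta^2$. Thus $I^n=0$.

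The main obstacle is purely bookkeeping: making sure the degree conventions of Definition \ref{defi:In} match the indices in Proposition \ref{prop:KOCKUCvanishing} and Corollary \ref{cor:KCsphereFromKCalg}, and checking that realification on $\widetilde{KU}^\ast(\sph^n)$ is compatible with the suspension isomorphism. The latter holds because $r$ is a map of cohomology theories, but signs and periodicity shifts need care. I would first tabulate $KO^{d}(\pt)$ for $d$ modulo $8$ to confirm each degree claim above.
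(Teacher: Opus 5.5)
Your proof is correct and takes essentially the paper's route. You use the same lower bounds, from Corollary~\ref{cor:KCsphereFromKCalg} and, for $n$ even, from realification, and the same upper bounds, from Proposition~\ref{prop:KOCKUCvanishing} combined with the ring structure of $KO^\ast(\pt)$. Your single observation about degrees $\equiv 0 \pmod 4$ does the job of Lemma~\ref{lem:IvanishesImpliesIContained}. One trivial slip: for $n\equiv 0\pmod 8$ the generator of $KO^{-n}(\pt)$ is $\lambda^{n/8}$, not $\lambda^{-n/8}$. This does not affect the argument.
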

In the proof of Theorem \ref{thm:KOCSphere} we will make use of the following lemma:
\begin{lem}\label{lem:IvanishesImpliesIContained}
Let $J\subset KO^\ast(\pt)$ be a homogeneous ideal in the graded ring $KO^\ast(\pt)$. For $k\in \Z$ denote by $J_{(k)}$ its $k$-th homogeneous part. Then, the following statements are true
\begin{enumerate}
    \item if $J_{(0)}=0$, then $J\subset (\eta)$,
    \item if $J_{(-1)}=0$ then $J\subset (2,\eta^2,\alpha)$,
    \item if $J_{(0)}=0$ and $J_{(-2)}=0$ then $J=(0)$,
    \item if $J_{(-4)}=(0)$ and $J_{(-2)}=0$ then $J=(0)$.
\end{enumerate}
\begin{proof}
All the four statements are a simple consequence of the structure of the coefficient ring $KO^\ast(\pt)$. 

Indeed, if $J_{(0)}=0$ then also $J_{(4)}=0$ since multiplication by $\alpha$ is a monomorphism from $J_{(4)}$ to $J_{(0)}$. Since the coefficient ring $KO^\ast(\pt)$ contains the invertible element $\lambda$ of degree $-8$, it follows that every nonzero homogeneous element of $J$ is of degree congruent to $-1$ or $-2$ modulo $8$. Every such element is divisible by $\eta$, so $J\subset (\eta)$.

Now assume that $J_{(-1)}=0$. The group $J_{(0)}$ is then contained in the kernel of the homomorphism $KO^0(\pt)\rightarrow KO^{-1}(\pt)$ induced by multiplication by $\eta$. It follows that every element of $J_{(0)}$ is divisible by two, and more generally every element of $J$ of degree divisible by $8$ is divisible by $2$. Every other nonzero homogeneous element of $J$ is of degree congruent to either $-2$ or $-4$ modulo $8$, so it is divisible either by $\eta^2$ or $\alpha$. Hence $J\subset (2,\eta^2,\alpha)$.

Assume that $J_{(0)}=0$ and $J_{(-2)}=0$. By the already covered case $J\subset (\eta)$; in particular $J_{(-4)}=0$. Moreover, multiplication by $\eta$ induces an injection from $J_{(-1)}$ to $J_{(-2)}$, so $J_{(-1)}=0$ as well. Hence, $J_{(k)}=0$ for every $0\geq k \geq -7$, from which it follows that $J=(0)$.

Finally assume that $J_{(-4)}=0$ and $J_{(-2)}=0$. Multiplication by $\alpha$ induces an injection from $J_{(0)}$ to $J_{(-4)}$, so $J_{(0)}=0$. It now follows that $J=(0)$ from the previously covered case.
\end{proof}
\end{lem}
\begin{proof}[Proof of Theorem \ref{thm:KOCSphere}]
We split the proof into $8$ cases, depending on the congruence of $n$ modulo $8$.
\begin{case}
$n$ congruent to $0$ modulo $8$.
\end{case}
According to Corollary \ref{cor:KCsphereFromKCalg}, we have $\widetilde{KO}^0_\C(\sph^n)=\widetilde{KO}^0(\sph^n)$. Hence $I^n_{(-n)}=KO^{-n}(\pt)$. It follows that $I^n$ contains the invertible element $\lambda^{n/8}$, so $I^n=(1)$.
\begin{case}
$n$ congruent to $4$ modulo $8$
\end{case}
This is similar to the previous case. According to Corollary \ref{cor:KCsphereFromKCalg}, we have $\widetilde{KO}^{-4}_\C(\sph^n)=\widetilde{KO}^{-4}(\sph^n)$. Hence $I^n_{(-n-4)}=KO^{-n-4}(\pt)$. It follows that $I^n$ contains the invertible element $\lambda^{(n+4)/8}$, so $I^n=(1)$.
\begin{case}
$n$ congruent to $1$ modulo $8$
\end{case}
According to Corollary \ref{cor:KCsphereFromKCalg}, we have $\widetilde{KO}^0_\C(\sph^n)=\widetilde{KO}^0(\sph^n)$. Hence $I^n_{(-n)}=KO^{-n}(\pt)$. We deduce that $\lambda^{(n-1)/8} \eta \in I^n$, so $\eta \in I^n$.

On the other hand, according to point \eqref{vanishingp1} of Proposition \ref{prop:KOCKUCvanishing}, we have $\widetilde{KO}^{n}_\C(\sph^n)=0$, so $I^n_{(0)}=0$. From Lemma \ref{lem:IvanishesImpliesIContained} we deduce that $I^n\subset (\eta)$. Hence, $I^n=(\eta)$.
\begin{case}
$n$ congruent to $5$ modulo $8$
\end{case}
This is similar to the previous case. According to Corollary \ref{cor:KCsphereFromKCalg}, we have $\widetilde{KO}^{-4}_\C(\sph^n)=\widetilde{KO}^{-4}(\sph^n)$. Hence $I^n_{(-n-4)}=KO^{-n-4}(\pt)$. We deduce that $\lambda^{(n+3)/8} \eta \in I^{n}$, so $\eta \in I^n$.

On the other hand, according to point \eqref{vanishingp1} of Proposition \ref{prop:KOCKUCvanishing}, we have $\widetilde{KO}_\C^{n}(\sph^n)=0$, so $I^n_{(0)}=0$. From Lemma \ref{lem:IvanishesImpliesIContained} we deduce that $I^n\subset (\eta)$. Hence, $I^n=(\eta)$.
\begin{case}
$n$ congruent to $2$ modulo $8$.
\end{case}

Since $n$ is even, according to Theorem \ref{thm:KUCSphere} we have $\widetilde{KU}^\ast_\C(\sph^n)=\widetilde{KU}^\ast(\sph^n)$. From the commutative diagram
\begin{center}
\begin{tikzcd}
\widetilde{KU}^p(\sph^n) \arrow[r, "r"] \arrow[d, "\cong"] & \widetilde{KO}^p(\sph^n) \arrow[d, "\cong"] \\
KU^{p-n}(\pt) \arrow[r, "r"]                                 & KO^{p-n}(\pt)                                
\end{tikzcd}
\end{center}
and Observation \ref{obs:realificationKC}, we deduce that the image of the realification homomorphism $r:KU^\ast(\pt)\rightarrow KO^\ast(\pt)$ is contained in $I^n$. As described in Proposition \ref{prop:realificationImage}, this image is the ideal generated by $2,\eta^2,\alpha$. Hence, $2,\eta^2,\alpha\in I^n$.

On the other hand, according to Proposition \ref{prop:KOCKUCvanishing} we have $\widetilde{KO}^{n-1}_\C(\sph^n)=0$. Hence $I^n_{(-1)}=0$. According to Lemma \ref{lem:IvanishesImpliesIContained} this forces $I^n\subset (2,\eta^2,\alpha)$. Hence, $I^n=(2,\eta^2,\alpha)$.
\begin{case}
$n$ congruent to $6$ modulo $8$.
\end{case}
This is similar to the previous case. Since $n$ is even, according to Theorem \ref{thm:KUCSphere} we have $\widetilde{KU}^\ast_\C(\sph^n)=\widetilde{KU}^\ast(\sph^n)$. As before, this implies that $2,\eta^2,\alpha \in I^n$.

On the other hand, according to Proposition \ref{prop:KOCKUCvanishing} we have $I^n_{(-1)}=0$. Again, this forces $I^n\subset (2,\eta^2,\alpha)$. Hence, $I^n=(2,\eta^2,\alpha)$.
\begin{case}
$n$ congruent to $3$ modulo $8$.
\end{case}
According to Proposition \ref{prop:KOCKUCvanishing}, we have $\widetilde{KO}^n_\C(\sph^n)=0$ and $\widetilde{KO}^{n-2}_\C(\sph^n)=0$. This means that $I^n_{(0)}=0$ and $I^n_{(-2)}=0$. Lemma \ref{lem:IvanishesImpliesIContained} implies that $I^n=(0)$.
\begin{case}
$n$ congruent to $7$ modulo $8$.
\end{case}
According to Proposition \ref{prop:KOCKUCvanishing}, we have $\widetilde{KO}^{n-4}_\C(\sph^n)=0$ and $\widetilde{KO}^{n-2}_\C(\sph^n)=0$. This means that $I^n_{(-4)}=0$ and $I^n_{(-2)}=0$. Lemma \ref{lem:IvanishesImpliesIContained} implies that $I^n=(0)$.
\end{proof}
\setcounter{case}{0}
\begin{cor}\label{cor:KOCSphereIndex}
The index
\begin{equation*}
    [K\R^{p+q\tau}(\sph^n):K\R^{p+q\tau}_\C(\sph^n)]
\end{equation*}
is equal to $\varphi(n,q-p)$, where $\varphi:\Z\times \Z\rightarrow\{1,2,\infty\}$ is the function described in Section \ref{sec:introduction:products}. 

Note that this describes the subgroup $K\R^{p+q\tau}_\C(\sph^n)$ of $K\R^{p+q\tau}(\sph^n)$ completely, as the group
\begin{equation*}
    K\R^{p+q\tau}(\sph^n)\cong KO^{p-q}(\sph^n)\cong KO^{p-q-n}(\pt)
\end{equation*}
is cyclic.
\begin{proof}
Since the coefficient ring of $K\R^\star$ contains the invertible element $\sigma$ of degree $-1-\tau$, we have
\begin{multline*}
    [K\R^{p+q\tau}(\sph^n):K\R^{p+q\tau}_\C(\sph^n)]=[KO^{p-q}(\sph^n):KO^{p-q}_\C(\sph^n)]= \\
    =[KO^{p-q-n}(\pt):I^n_{(p-q-n)}].
\end{multline*}
Knowing the structure of the coefficient ring $KO^\ast(\pt)$, we deduce the conclusion from Theorem \ref{thm:KOCSphere}.
\end{proof}
\end{cor}

\subsection{Algebraic K-groups of products of spheres}\label{sec:computation:algebraicKtheory}
We can now begin the computation of algebraic K-theory of products of spheres.
\begin{thm}\label{thm:KUCalgSpheres}
Let $n,m\geq 0$ be nonnegative integers. If at least one of the numbers $n$ and $m$ is even, then,
\begin{multline*}
    KU_{\C-\alg}^0(\sph^n\times \sph^m,\sph^n\vee \sph^m)=KU_\C^0(\sph^n\times \sph^m,\sph^n\vee \sph^m)=\\
    =KU^0(\sph^n\times \sph^m,\sph^n\vee \sph^m).
\end{multline*}
On the contrary, if both of them are odd, then
\begin{equation*}
    KU_{\C-\alg}^0(\sph^n\times \sph^m,\sph^n\vee \sph^m)=KU_\C^0(\sph^n\times \sph^m,\sph^n\vee \sph^m)=0.
\end{equation*}
\begin{proof}
If one of the numbers $n,m$ is odd and one is even, then
\begin{equation*}
    KU^0(\sph^n\times \sph^m,\sph^n\vee \sph^m)\cong KU^{-n-m}(\pt)=0,
\end{equation*}
so the conclusion holds trivially.

Suppose that both $n$ and $m$ are odd. From Proposition \ref{prop:suspensionFC} we get that the image of the group $KU^0_\C(\sph^n\times \sph^m,\sph^n\vee \sph^m)$ under the isomorphism
\begin{equation*}
    KU^0(\sph^n\times \sph^m,\sph^n\vee \sph^m)\xrightarrow{\cong}\widetilde{KU}^{-m}(\sph^n)
\end{equation*}
is contained in $\widetilde{KU}_\C^{-m}(\sph^n)$. However, $\widetilde{KU}_\C^{-m}(\sph^n)=0$ due to Theorem \ref{thm:KUCSphere}, so $KU^0_\C(\sph^n\times \sph^m,\sph^n\vee \sph^m)=0$. Hence $KU^0_{\C-\alg}(\sph^n\times \sph^m,\sph^n\vee \sph^m)=0$ as well since it is contained in the former group.

Finally, suppose that both $n$ and $m$ are even. Using Observation \ref{obs:productsExist} and compatibility of the cross product with suspension, we find the following commutative diagram:
\begin{center}
\begin{tikzcd}
\widetilde{KU}_{\C-\alg}^0(\sph^n)\times \widetilde{KU}_{\C-\alg}^0(\sph^m) \arrow[r] \arrow[d, "\cong"] & {KU^0_{\C-\alg}(\sph^n\times \sph^m,\sph^n\vee \sph^m)} \arrow[d, hook] \\
\widetilde{KU}^0(\sph^n)\times \widetilde{KU}^0(\sph^m) \arrow[d, "\cong"] \arrow[r]                     & {KU^0(\sph^n\times \sph^m,\sph^n\vee \sph^m)} \arrow[d, "\cong"]        \\
KU^{-n}(\pt)\times KU^{-m}(\pt) \arrow[r]                                                                    & KU^{-n-m}(\pt)                                                                    
\end{tikzcd}
\end{center}
Here the upper left vertical arrow is a bijection due to Theorem \ref{thm:KCalgSphere}. Now, chasing the pair $(\beta^{n/2},\beta^{m/2})\in KU^{-n}(\pt)\times KU^{-m}(\pt)$ around the diagram, we find that the element $\beta^{n/2}\beta^{m/2}=\beta^{(n+m)/2}\in KU^{-n-m}(\pt)$ lies in the image of the composition
\begin{equation*}
    {KU^0_{\C-\alg}(\sph^n\times \sph^m,\sph^n\vee \sph^m)}\hookrightarrow KU^0(\sph^n\times \sph^m,\sph^n\vee \sph^m)\xrightarrow{\cong} KU^{-n-m}(\pt).
\end{equation*}
Since $\beta^{(n+m)/2}$ generates $KU^{-n-m}(\pt)$ as a group, it follows that we have 
\begin{equation*}
    KU^0_{\C-\alg}(\sph^n\times \sph^m,\sph^n\vee \sph^m)=KU^0(\sph^n\times \sph^m,\sph^n\vee \sph^m).
\end{equation*}
Hence $KU^0_{\C}(\sph^n\times \sph^m,\sph^n\vee \sph^m)=KU^0(\sph^n\times \sph^m,\sph^n\vee \sph^m)$
as well.
\end{proof}
\end{thm}

The groups $KO^0_{\C-\alg}(\sph^n\times \sph^m,\sph^n\vee \sph^m)$ and $KO^0_{\C}(\sph^n\times \sph^m,\sph^n\vee \sph^m)$ can now be described in a similar manner:
\begin{thm}\label{thm:KOCSphereKOCalgSpheres}
Let $n$ and $m$ be nonnegative integers. Then, the groups $KO^0_{\C-\alg}(\sph^n\times \sph^m,\sph^n\vee \sph^m)$ and $KO^0_{\C}(\sph^n\times \sph^m,\sph^n\vee \sph^m)$ are equal as subgroups of $KO^0(\sph^n\times \sph^m,\sph^n\vee \sph^m)$. Moreover, the image of $KO^0_{\C}(\sph^n\times \sph^m,\sph^n\vee \sph^m)$ under the isomorphism
\begin{equation*}
        KO^0(\sph^n\times \sph^m,\sph^n\vee \sph^m)\xrightarrow{\cong}\widetilde{KO}^{-m}(\sph^n)
    \end{equation*}
is precisely equal to $\widetilde{KO}^{-m}_\C(\sph^n)$. Hence, in view of Corollary \ref{cor:KOCSphereIndex} the index
\begin{align*}
    &[KO^0(\sph^n\times \sph^m,\sph^n\vee \sph^m):KO^0_{\C-\alg}(\sph^n\times \sph^m,\sph^n\vee \sph^m)]=\\
    =&[KO^0(\sph^n\times \sph^m,\sph^n\vee \sph^m):KO^0_{\C}(\sph^n\times \sph^m,\sph^n\vee \sph^m)]
\end{align*}
is equal to $\varphi(n,m)$.
\begin{proof}
Let us fix $n\geq 0$. We still use the notation $I^n$ for the ideal described in Definition \ref{defi:In}.

For $p\leq -n$, define $A^n_{(p)}\subset KO^p(\pt)$ as the image of the group $KO^0_{\C-\alg}(\sph^n\times \sph^{-p-n},\sph^n\vee \sph^{-p-n})$ through the composition of the isomorphisms
\begin{equation}\label{eq:KOCIso}
    KO^0(\sph^n\times \sph^{-p-n},\sph^n\vee \sph^{-p-n})\xrightarrow{\cong}\widetilde{KO}^{p+n}(\sph^n)\xrightarrow{\cong} KO^{p}(\pt).
\end{equation}
Define $B^n_{(p)}\subset KO^p(\pt)$ similarly as the image of $KO^0_{\C}(\sph^n\times \sph^{-p-n},\sph^n\vee \sph^{-p-n})$ through the isomorphism \eqref{eq:KOCIso}. Extend these definitions to all $p\in \Z$, by defining 
\begin{equation*}
    A^n_{(p)}=B^n_{(p)}:=I^n_{(p)}
\end{equation*}
for $p>-n$.

Finally, define the graded groups
\begin{equation*}
    A^n:=\bigoplus_{p=-\infty}^\infty A^n_{(p)}\subset KO^\ast(\pt),\quad B^n:=\bigoplus_{p=-\infty}^\infty B^n_{(p)}\subset KO^\ast(\pt).
\end{equation*}
It follows from Proposition \ref{prop:KCalgInKC}, that $A^n\subset B^n$. Moreover, it follows from Proposition \ref{prop:suspensionFC}, that $B^n\subset I^n$. 

Now, the desired equality of $KO^0_{\C-\alg}(\sph^n\times \sph^m,\sph^n\vee \sph^m)$ and $KO^0_{\C}(\sph^n\times \sph^m,\sph^n\vee \sph^m)$ is equivalent to $A^n_{(-n-m)}$ being equal to $B^n_{(-n-m)}$. Similarly, the desired equality of the image of $KO^0_{\C}(\sph^n\times \sph^m,\sph^n\vee \sph^m)$ and $\widetilde{KO}^{-m}(\sph^n)$ is equivalent to the equality $B^n_{(-n-m)}=I^n_{(-n-m)}$. Therefore, keeping in mind the chain of inclusions $A^n\subset B^n\subset I^n$, it suffices to show that $I^n\subset A^n$ for all $n$. We consider four cases, depending on the congruence of $n$ modulo $4$.

If $n$ is congruent to $3$ modulo $4$, then as computed in Theorem \ref{thm:KOCSphere}, we have $I^n=(0)$. The containment $I^n\subset A^n$ is then obvious.

If $n$ is congruent to $0$ or $1$ modulo $4$, then it follows from the description of $I^n$ in Theorem \ref{thm:KOCSphere}, that as an ideal it is generated by elements of degree $-n$ and $-n-4$. Hence, in these two cases the conclusion follows from the following lemma:

\begin{lem}\label{lem:proofKO1}
For every $n$ congruent to $0$ or $1$ modulo $4$, the ideal of $KO^\ast(\pt)$ generated by all the elements of degree $-n$ is contained in $A^n$. Similarly, the ideal of $KO^\ast(\pt)$ generated by all the elements of degree $-n-4$ is also contained in $A^n$.
\begin{proof}
Let us begin with the first part, regarding elements of degree $-n$. Let $\gamma\in KO^{-n}(\pt)$ be a generator of $KO^{-n}(\pt)$ (as a group). It suffices to show, that for all $m\in \Z$, and for all elements $\delta \in KO^{-m}(\pt)$ of degree $-m$, the product $\gamma \delta$ belongs to $A^n_{(-n-m)}$. This is clear for $m<0$, as in this case $A^n_{(-n-m)}=I^n_{(-n-m)}$ by definition. Let us hence assume that $m\geq 0$. Due to Observation \ref{obs:productsExist} and the compatibility of the cross product with suspension, we have the commutative diagram:
\begin{center}
\begin{tikzcd}
\widetilde{KO}_{\C-\alg}^0(\sph^n)\times \widetilde{KO}_{\C-\alg}^0(\sph^m) \arrow[d, "\cong"] \arrow[r] & {KO^0_{\C-\alg}(\sph^n\times \sph^m,\sph^n\vee \sph^m)} \arrow[d, hook] \\
\widetilde{KO}^0(\sph^n)\times \widetilde{KO}^0(\sph^m) \arrow[r] \arrow[d, "\cong"]                  & {KO^0(\sph^n\times \sph^m,\sph^n\vee \sph^m)} \arrow[d, "\cong"]        \\
KO^{-n}(\pt)\times KO^{-m}(\pt) \arrow[r]                                                                 & KO^{-n-m}(\pt)                                                                    
\end{tikzcd}
\end{center}
Here, the upper left arrow is a bijection due to Theorem \ref{thm:KCalgSphere}. Chasing the pair $(\gamma,\delta)\in KO^{-n}(\pt)\times KO^{-m}(\pt)$ around the diagram, we verify that the product of its coordinates $\gamma\delta\in KO^{-n-m}(\pt)$ is in the image of the composition on the right 
\begin{equation*}
    KO^0_{\C-\alg}(\sph^n\times \sph^m,\sph^n\vee \sph^m)\rightarrow KO^{-n-m}(\pt).
\end{equation*}
Hence $\gamma\delta\in A^n_{(-n-m)}$ by definition.

Let us now consider the second part, regarding elements of degree $-n-4$. Let $\gamma'\in KO^{-n-4}(\pt)$ be a generator of $KO^{-n-4}(\pt)$. It suffices to show, that for all $m\in \Z$, and for all elements $\delta' \in KO^{-m+4}(\pt)$ of degree $-m+4$, the product $\gamma' \delta'$ belongs to $A^n_{(-n-m)}$. The case $m<0$ is again trivial, so let us assume that $m\geq 0$. Due to Observation \ref{obs:productsExist} and the compatibility of the cross product with suspension, we have the commutative diagram:
\begin{center}
\begin{tikzcd}
\widetilde{KSp}_{\C-\alg}^0(\sph^n)\times \widetilde{KSp}_{\C-\alg}^0(\sph^m) \arrow[d, "\cong"] \arrow[r] & {KO^0_{\C-\alg}(\sph^n\times \sph^m,\sph^n\vee \sph^m)} \arrow[d, hook] \\
\widetilde{KSp}^0(\sph^n)\times \widetilde{KSp}^0(\sph^m) \arrow[r] \arrow[d, "\cong"]                     & {KO^0(\sph^n\times \sph^m,\sph^n\vee \sph^m)} \arrow[d, "\cong"]        \\
KSp^{-n}(\pt)\times KSp^{-m}(\pt) \arrow[r]                                                                    & KO^{-n-m}(\pt)                                                                     
                                 
\end{tikzcd}
\end{center}
Here, the upper left vertical arrow is a bijection due to Theorem \ref{thm:KCalgSphere}.

Chasing the class $(\lambda^{-1}\gamma'\theta,\delta'\theta)\in KSp^{-n}(\pt)\times KSp^{-m}(\pt)$ around the diagram, we verify, that the product of its coordinates $\gamma'\delta'\in KO^{-n-m}(\pt)$ is in the image of the composition on the right 
\begin{equation*}
    KO^0_{\C-\alg}(\sph^n\times \sph^m,\sph^n\vee \sph^m)\rightarrow KO^{-n-m}(\pt).
\end{equation*}
Hence $\gamma'\delta'\in A^n_{(-n-m)}$ by definition.
\end{proof}
\end{lem}

We are now left with the case, when $n$ is congruent to $2$ modulo $4$. According to Theorem \ref{thm:KOCSphere}, in this case we have that $I^n$ is equal to the image of the realification homomorphism 
\begin{equation*}
    r:KU^\ast(\pt)\rightarrow KO^\ast(\pt).
\end{equation*}
To conclude, it hence suffices to prove that this image is contained in $A^n$.

Since $n$ is even, according to Theorem \ref{thm:KUCalgSpheres} we have
\begin{equation*}
    KU^0_{\C-\alg}(\sph^n\times \sph^m,\sph^n\vee \sph^m)=KU^0(\sph^n\times \sph^m,\sph^n\vee \sph^m)
\end{equation*}
for all $m\geq 0$. We have the commutative diagram
\begin{center}
\begin{tikzcd}
{KU^0_{\C-\alg}(\sph^n\times \sph^m,\sph^n\vee \sph^m)} \arrow[r, "r"] \arrow[d, "\cong"] & {KO^0_{\C-\alg}(\sph^n\times \sph^m,\sph^n\vee \sph^m)} \arrow[d] \\
{KU^0(\sph^n\times \sph^m,\sph^n\vee \sph^m)} \arrow[d,"\cong"] \arrow[r, "r"]          & {KO^0(\sph^n\times \sph^m,\sph^n\vee \sph^m)} \arrow[d] \\
KU^{-n-m}(\pt) \arrow[r, "r"]                                                              & KO^{-n-m}(\pt)                                                    
\end{tikzcd}
\end{center}
It follows that
\begin{equation*}
    r(KU^{-n-m}(\pt))\subset A^n_{(-n-m)}
\end{equation*}
for $m\geq 0$. For $m<0$ this is obvious, as $A^n_{(-n-m)}=I^n_{(-n-m)}$.

This way we have considered all the possible cases and the proof has been finished.
\end{proof}
\end{thm}
Using a similar method, we can also describe the groups $KSp^0_{\C-\alg}(\sph^n\times \sph^m,\sph^n\vee\sph^m)$ and $KO^{-4}(\sph^n\times \sph^m,\sph^n\vee\sph^m)$:
\begin{thm}\label{thm:KSpCalgSpheres}
Let $n$ and $m$ be nonnegative integers. Then, the image of the group $KSp^0_{\C-\alg}(\sph^n\times \sph^m,\sph^n\vee \sph^m)$ under the isomorphism
\begin{equation*}
    KSp^0(\sph^n\times \sph^m,\sph^n\vee \sph^m)\xrightarrow{\cong} KO^{-4}(\sph^n\times \sph^m,\sph^n\vee \sph^m)
\end{equation*}
is equal to $KO^{-4}_{\C}(\sph^n\times \sph^m,\sph^n\vee \sph^m)$. Moreover, the image of $KO^{-4}_{\C}(\sph^n\times \sph^m,\sph^n\vee \sph^m)$ under the isomorphism
\begin{equation*}
        KO^{-4}(\sph^n\times \sph^m,\sph^n\vee \sph^m)\xrightarrow{\cong}\widetilde{KO}^{-m-4}(\sph^n)
    \end{equation*}
is equal to $\widetilde{KO}^{-m-4}_\C(\sph^n)$. Hence, in view of Corollary \ref{cor:KOCSphereIndex} the index
\begin{align*}
    &[KSp^0(\sph^n\times \sph^m,\sph^n\vee \sph^m):KSp^0_{\C-\alg}(\sph^n\times \sph^m,\sph^n\vee \sph^m)]=\\
    =&[KO^{-4}(\sph^n\times \sph^m,\sph^n\vee \sph^m):KO^{-4}_{\C}(\sph^n\times \sph^m,\sph^n\vee \sph^m)]
\end{align*}
is equal to $\varphi(n,m+4)$.
\begin{proof}
The proof is analogous to the proof of Theorem \ref{thm:KOCSphereKOCalgSpheres}. For $p\leq -n-4$, define $\widetilde {A}^n_{(p)}\subset KO^p(\pt)$ as the image of the group $KSp^0_{\C-\alg}(\sph^n\times \sph^{-p-n-4},\sph^n\vee \sph^{-p-n-4})$ through the composition of the isomorphisms
\begin{equation*}
    KSp^0(\sph^n\times \sph^{-p-n-4},\sph^n\vee \sph^{-p-n-4})\xrightarrow{\cong}KSp^{p+4}(\pt)\xrightarrow{\cong} KO^{p}(\pt).
\end{equation*}
Define $\widetilde {B}^n_{(p)}\subset KO^p(\pt)$ similarly as the image of $KO^{-4}_{\C}(\sph^n\times \sph^{-p-n-4},\sph^n\vee \sph^{-p-n-4})$ through the isomorphism 
\begin{equation*}
    KO^{-4}(\sph^n\times \sph^{-p-n-4},\sph^n\vee \sph^{-p-n-4})\xrightarrow{\cong} KO^p(\pt).
\end{equation*}
Extend these definitions to all $p\in \Z$, by defining 
\begin{equation*}
    \widetilde A^n_{(p)}=\widetilde B^n_{(p)}:=I^n_{(p)}
\end{equation*}
for $p>-n-4$. Define then the graded groups
\begin{equation*}
    \widetilde {A}^n:=\bigoplus_{p=-\infty}^\infty \widetilde {A}^n_{(p)}\subset KO^\ast(\pt),\quad \widetilde {B}^n:=\bigoplus_{p=-\infty}^\infty \widetilde {B}^n_{(p)}\subset KO^\ast(\pt).
\end{equation*}
It follows from Proposition \ref{prop:quaternionCycleClass}, that $\widetilde {A}^n\subset \widetilde {B}^n$. Moreover, it follows from Proposition \ref{prop:suspensionFC}, that $\widetilde {B}^n\subset I^n$. 

Similarly as before, it hence suffices to show that $I^n\subset \widetilde {A}^n$ for all $n$. The case when $n$ is congruent to $3$ modulo $4$ is again trivial, as $I^n=0$. 

The cases when $n$ is congruent to $0$ or $1$ modulo $4$ follow from the following analogue of Lemma \ref{lem:proofKO1}:
\begin{lem}
For every $n$ congruent to $0$ or $1$ modulo $4$, the ideal of $KO^\ast(\pt)$ generated by all the elements of degree $-n$ is contained in $\widetilde{A}^n$. Similarly, the ideal of $KO^\ast(\pt)$ generated by all the elements of degree $-n-4$ is also contained in $\widetilde{A}^n$.
\begin{proof}
Let us begin with the first part, regarding elements of degree $-n$. Let $\gamma\in KO^{-n}(\pt)$ be a generator of $KO^{-n}(\pt)$ (as a group). It suffices to show, that for all $m\in \Z$, and for all elements $\delta \in KO^{-m-4}(\pt)$ of degree $-m-4$, the product $\gamma \delta$ belongs to $\widetilde {A}^n_{(-n-m-4)}$. This is clear for $m<0$, as in this case $\widetilde {A}^n_{(-n-m-4)}=I^n_{(-n-m-4)}$ by definition. Let us hence assume that $m\geq 0$. Due to Observation \ref{obs:productsExist} and the compatibility of the cross product with suspension, we have the commutative diagram:
\begin{center}
\begin{tikzcd}
\widetilde{KO}_{\C-\alg}^0(\sph^n)\times \widetilde{KSp}_{\C-\alg}^0(\sph^m) \arrow[d, "\cong"] \arrow[r] & {KSp^0_{\C-\alg}(\sph^n\times \sph^m,\sph^n\vee \sph^m)} \arrow[d, hook] \\
\widetilde{KO}^0(\sph^n)\times \widetilde{KSp}^0(\sph^m) \arrow[r] \arrow[d, "\cong"]                     & {KSp^0(\sph^n\times \sph^m,\sph^n\vee \sph^m)} \arrow[d, "\cong"]        \\
KO^{-n}(\pt)\times KSp^{-m}(\pt) \arrow[r] \arrow[d, "\cong"]                                                 & KSp^{-n-m}(\pt) \arrow[d]                                                           \\
KO^{-n}(\pt)\times KO^{-m-4}(\pt) \arrow[r]                                                                   & KO^{-n-m-4}(\pt)                                                                   
\end{tikzcd}
\end{center}
Here, the upper left arrow is a bijection due to Theorem \ref{thm:KCalgSphere}. Chasing the pair $(\gamma,\delta)\in KO^{-n}(\pt)\times KO^{-m-4}(\pt)$ we deduce that the product $\gamma\delta\in KO^{-n-m-4}(\pt)$ is in the image of the homomorphism 
\begin{equation*}
    KSp^0_{\C-\alg}(\sph^n\times \sph^m,\sph^n\vee \sph^m)\rightarrow KO^{-n-m-4}(\pt).
\end{equation*}
Hence $\gamma\delta\in \widetilde {A}^n_{(-n-m-4)}$ by definition.

Let us now consider the second part, regarding elements of degree $-n-4$. Let $\gamma'\in KO^{-n-4}(\pt)$ be a generator of $KO^{-n-4}(\pt)$. It suffices to show, that for all $m\in \Z$, and for all elements $\delta' \in KO^{-m}(\pt)$ of degree $-m$, the product $\gamma' \delta'$ belongs to $\widetilde {A}^n_{(-n-m-4)}$. The case $m<0$ is again trivial, so let us assume that $m\geq 0$. Due to Observation \ref{obs:productsExist} and the compatibility of the cross product with suspension, we have the commutative diagram:
\begin{center}
\begin{tikzcd}
\widetilde{KSp}_{\C-\alg}^0(\sph^n)\times \widetilde{KO}_{\C-\alg}^0(\sph^m) \arrow[d, "\cong"] \arrow[r] & {KSp^0_{\C-\alg}(\sph^n\times \sph^m,\sph^n\vee \sph^m)} \arrow[d, hook] \\
\widetilde{KSp}^0(\sph^n)\times \widetilde{KO}^0(\sph^m) \arrow[r] \arrow[d, "\cong"]                     & {KSp^0(\sph^n\times \sph^m,\sph^n\vee \sph^m)} \arrow[d, "\cong"]        \\
KSp^{-n}(\pt)\times KO^{-m}(\pt) \arrow[r] \arrow[d, "\cong"]                                                 & KSp^{-n-m}(\pt) \arrow[d]                                                           \\
KO^{-n-4}(\pt)\times KO^{-m}(\pt) \arrow[r]                                                                   & KO^{-n-m-4}(\pt)                                                                   
\end{tikzcd}
\end{center}
Here, the upper left vertical arrow is a bijection due to Theorem \ref{thm:KCalgSphere}. Chasing the pair $(\gamma',\delta')\in KO^{-n-4}(\pt)\times KO^{-m}(\pt)$ we deduce that the product $\gamma'\delta'\in KO^{-n-m-4}(\pt)$ is in the image of the homomorphism 
\begin{equation*}
    KSp^0_{\C-\alg}(\sph^n\times \sph^m,\sph^n\vee \sph^m)\rightarrow KO^{-n-m-4}(\pt).
\end{equation*}
Hence $\gamma'\delta'\in \widetilde {A}^n_{(-n-m-4)}$ by definition.
\end{proof}
\end{lem}

In the remaining case of $n$ congruent to $2$ modulo $4$, we need to show that the ideal $(2,\eta^2,\alpha)$ is contained in $\widetilde{A}^n$. As stated in Proposition \ref{prop:quaternionification}, this ideal is equal to the image of the composition
\begin{equation*}
    \theta h:KU^\ast(\pt)\xrightarrow{h} KSp^\ast(\pt)\xrightarrow{\cdot \theta} KO^{\ast-4}(\pt).
\end{equation*}
Hence we need to show that for all $m$, we have $\theta h(KU^{-n-m}(\pt))\subset \widetilde {A}^n_{(-n-m-4)}$. This is clear for $m<0$, as in this case $\widetilde {A}^n_{(-n-m-4)}=I^n_{(-n-m-4)}$. Let us then assume that $m\geq 0$.

Since $n$ is even, then according to Theorem \ref{thm:KUCalgSpheres}, for $m\geq 0$ we have
\begin{equation*}
    KU^0_{\C-\alg}(\sph^n\times \sph^m,\sph^n\vee \sph^m)=KU^0(\sph^n\times \sph^m,\sph^n\vee \sph^m).
\end{equation*}
We also have the commutative diagram
\begin{center}
\begin{tikzcd}
{KU_{\C-\alg}^0(\sph^n\times \sph^m,\sph^n\vee \sph^m)} \arrow[r, "h"] \arrow[d, "\cong"] & {KSp_{\C-\alg}^{0}(\sph^n\times \sph^m,\sph^n\vee \sph^m)} \arrow[d] \\
{KU^0(\sph^n\times \sph^m,\sph^n\vee \sph^m)} \arrow[d, "\cong"] \arrow[r, "h"]           & {KSp^0(\sph^n\times \sph^m,\sph^n\vee \sph^m)} \arrow[d,"\cong"]             \\
KU^{-n-m}(\pt) \arrow[r, "h"] \arrow[rd, "\theta h"]                                                 & KSp^{-n-m}(\pt) \arrow[d, "\cong"]                                              \\
                                                                                                      & KO^{-n-m-4}(\pt)                                                               
\end{tikzcd}
\end{center}
It follows that $\theta h(KU^{-n-m}(\pt))$ is contained in the image of the composition on the right, so $\theta h(KU^{-n-m}(\pt))\subset \widetilde {A}^n_{(-n-m-4)}$ as claimed.
\end{proof}
\end{thm}

From these results we deduce the description of the algebraic K-groups stated in the introduction:
\begin{proof}[Proof of Theorems \ref{thm:introKU} and \ref{thm:introKOKSp}]
Recall from Proposition \ref{prop:splittings} that there are splittings
\begin{align*}
    \widetilde{KO}_{\C-\alg}^0(\sph^n\times \sph^m)&\cong \widetilde{KO}_{\C-\alg}^0(\sph^n\times \sph^m,\sph^n\vee \sph^m)\oplus \widetilde{KO}_{\C-\alg}^0(\sph^n)\oplus \widetilde{KO}_{\C-\alg}^0(\sph^m),\\
    \widetilde{KU}_{\C-\alg}^0(\sph^n\times \sph^m)&\cong \widetilde{KU}_{\C-\alg}^0(\sph^n\times \sph^m,\sph^n\vee \sph^m)\oplus \widetilde{KU}_{\C-\alg}^0(\sph^n)\oplus \widetilde{KU}_{\C-\alg}^0(\sph^m),\\
    \widetilde{KSp}_{\C-\alg}^0(\sph^n\times \sph^m)&\cong \widetilde{KSp}_{\C-\alg}^0(\sph^n\times \sph^m,\sph^n\vee \sph^m)\oplus \widetilde{KSp}_{\C-\alg}^0(\sph^n)\oplus \widetilde{KSp}_{\C-\alg}^0(\sph^m).
\end{align*}
Every summand on the right is described by Theorem \ref{thm:KCalgSphere}, Theorem \ref{thm:KUCalgSpheres}, Theorem \ref{thm:KOCSphereKOCalgSpheres} or Theorem \ref{thm:KSpCalgSpheres}. This yields the conclusion.
\end{proof} 
\section{Maps from products of spheres into spheres}
\subsection{K-theoretic invariants as obstructions}
We can now apply our results to obtain negative results about the existence of regular maps into spheres. First, we formulate a general obstruction to the existence of a regular map from $\sph^n\times \sph^m$ into $\sph^{n+m}$ of topological degree one:
\begin{prop}\label{prop:conditionForNoMapSpheres}
Let $n$ and $m$ be positive integers. Let $F^\star$ be a Real-oriented equivariant cohomology theory. Assume that there exists a regular map from $\sph^n\times \sph^m$ into $\sph^{n+m}$ of topological degree one. Then, the suspension isomorphism
\begin{equation*}
    \widetilde{F}^\star(\sph^{n+m})\xrightarrow{ \cong}\widetilde{F}^{\star-m}(\sph^n)
\end{equation*}
carries $\widetilde{F}_\C^\star(\sph^{n+m})$ into $\widetilde{F}_\C^{\star-m}(\sph^{n})$. The same is true in the complex-oriented case.
\begin{proof}
We consider only the Real-oriented case. Let $f:\sph^n\times \sph^m\rightarrow \sph^{n+m}$ be the map collapsing $\sph^n\vee \sph^m$ into a point. It is of topological degree one, so by the assumption and Hopf theorem it is homotopic to a regular map. The suspension isomorphism can then be written as
\begin{equation*}
    \widetilde{F}^\star(\sph^{n+m})\xrightarrow{f^\ast} F^\star(\sph^n\times \sph^m,\sph^n\vee \sph^m)\rightarrow\widetilde{F}^{\star-m}(\sph^n).
\end{equation*}
Since $f$ is homotopic to a regular map, the conclusion follows from Observation \ref{obs:inclusion} and Proposition \ref{prop:suspensionFC}.
\end{proof}
\end{prop}
\begin{proof}[Proof of Theorem \ref{thm:23mod4noMap}]
We apply Proposition \ref{prop:conditionForNoMapSpheres} to the Real-oriented theory $K\R^\star$. The conclusion of Proposition \ref{prop:conditionForNoMapSpheres} is equivalent to the inclusion $I^{n+m}\subset I^n$, where $I^{n+m}$ and $I^n$ are defined as in Definition \ref{defi:In}. Using Theorem \ref{thm:KOCSphere}, one verifies that if one of the numbers $n,m$ is congruent to $2$ modulo $4$ and the other is congruent to $2$ or $3$ modulo $4$, then $\eta \in I^{n+m}$, but $\eta\not\in I^n$. This shows that under this assumption there is no regular map from $\sph^n\times \sph^m$ into $\sph^{n+m}$ of topological degree one. in view of Proposition \ref{prop:dichotomy} this finishes the proof.
\end{proof}
\begin{rem}
Using a similar argument involving the functor $K\R_\C^\ast$ one can provide a new proof of Theorem \ref{thm:BKNoMap}.
\end{rem}
\subsection{A construction involving normed bilinear maps}\label{sec:products:bilinear}
It was already noticed in 1973 by Loday \cite{lodayApplicationsAlgebriquesTore1973}, that regular (actually, polynomial) maps from products of spheres into spheres sometimes arise as restrictions of appropriate bilinear maps. In this subsection we generalise the construction of Loday, constructing new pairs of positive integers $(n,m)$ for which there exists a regular map $f:\sph^n \times \sph^m\rightarrow \sph^{n+m}$ of topological degree one.

The following definition is classical:
\begin{defi}
Let $n,m,k$ be positive integers. A bilinear map $F:\R^n\times \R^m\rightarrow \R^k$ is said to be \emph{normed}, if 
\begin{equation*}
    \Vert F(x,y)\Vert =\Vert x\Vert \Vert y\Vert,
\end{equation*}
for all $x\in \R^n, y\in \R^m$, where $\Vert \cdot \Vert$ denotes the Euclidean norm.
\end{defi}
We also introduce the following simple condition which plays an important role in our construction
\begin{defi}
A bilinear map $F:\R^n\times \R^m\rightarrow \R^k$ is said to be \emph{nice}, if its first coordinate satisfies
\begin{equation*}
    F_1(x,y)=x_1y_1
\end{equation*}
for all $x\in \R^n, y\in \R^m$.
\end{defi}
\begin{lem}\label{lem:bilinear+odd=>existsRegular}
Let $n$ and $m$ be two positive integers. Assume that the following conditions are satisfied:
\begin{enumerate}
    \item there exists a nice normed bilinear map $F:\R^{n+1}\times \R^{m+1} \rightarrow \R^{n+m+1}$,
    \item the number $\binom{n+m}{n}$ is odd.
\end{enumerate}
Then, every map $f:\sph^{n}\times \sph^m\rightarrow \sph^{n+m}$ is homotopic to a regular one.
\begin{proof}
The second condition forces at least one of the numbers $n,m$ to be even, so in view of Proposition \ref{prop:dichotomy}, it suffices to construct a regular map $\sph^{n}\times \sph^m\rightarrow \sph^{n+m}$ of odd topological degree. In the course of the proof, for every $k$, through a shift, we identify $\sph^k$ with the unit sphere in $\R^{k+1}$ centred at the point $(1,0,\dots,0)$. That is, through a biregular isomorphism we identify $\sph^k$ with the algebraic subset of $\R^{k+1}$ given by the equation
\begin{equation*}
    (x_1-1)^2+\sum_{i=2}^{k+1}x_i^2=1,
\end{equation*}
which can be rewritten as 
\begin{equation*}
    \Vert x\Vert^2=2x_1.
\end{equation*}
We claim that the scaled restriction
\begin{equation*}
    f:\sph^n\times \sph^m\rightarrow \R^{n+m+1},\quad f:=\frac{1}{2}F\vert_{\sph^n\times \sph^m}
\end{equation*}
has its image contained in $\sph^{n+m}$, and that as a map into $\sph^{n+m}$ it is of odd topological degree. Since it is obviously regular (indeed polynomial) the conclusion follows from this claim.

For $x\in \sph^n,y\in \sph^m$ we have
\begin{equation*}
    \Vert f(x,y)\Vert^2=\frac{1}{4}\Vert F(x,y)\Vert^2=\frac{1}{4}\Vert x\Vert^2 \Vert y\Vert ^2=x_1y_1=F_1(x,y)=2f_1(x,y).
\end{equation*}
Therefore, the image of $f$ is indeed contained in $\sph^{n+m}$. 

Now, consider the associated map of projective spaces $\varphi:\R P^n\times \R P^m\rightarrow \R P^{n+m}$ which to a pair of points in homogeneous coordinates $(x,y)=(x_1:\dots:x_{n+1},y_1:\dots:y_{m+1})$ associates the point written in homogeneous coordinates as $F(x,y)$. It is well defined, as due to $F$ being normed the point $F(x,y)$ is nonzero if both $x$ and $y$ are nonzero.

For every $k$, let $\psi_k:\R P^k\rightarrow \sph^k$ be the blowup, explicitly given by 
\begin{equation*}
    \psi_k(x_1:\dots:x_{k+1})=\frac{2x_1}{\Vert x\Vert^2}x.
\end{equation*}
Geometrically, it collapses the hypersurface $x_1=0$ to the origin, and to every point $x\in \R P^{k}\backslash \R P^{k-1}$ it associates the second intersection point of the line $\R x$ with $\sph^k$ (other than the origin). We assert that the following diagram is commutative:
\begin{center}
\begin{tikzcd}
\R P^n\times \R P^m \arrow[d, "\psi_n\times \psi_m"] \arrow[r, "\varphi"] & \R P^{n+m} \arrow[d, "\psi_{n+m}"] \\
\sph^n\times \sph^m \arrow[r, "f"]                                      & \sph^{n+m}                       
\end{tikzcd}
\end{center}
This can be seen geometrically, but we provide an explicit computation instead. Starting with a point $(x,y)\in \R P^n\times \R P^m$ written in homogeneous coordinates, the composition of the left vertical and bottom horizontal arrows maps it to 
\begin{equation*}
    f(\psi_n(x),\psi_m(y))=\frac{1}{2}F\left(\frac{2x_1}{\Vert x\Vert ^2}x,\frac{2y_1}{\Vert y\Vert ^2}y\right)=\frac{2x_1y_1}{\Vert x\Vert^2\Vert y\Vert^2}F(x,y).
\end{equation*}
The other composition maps it to the same point as
\begin{equation*}
    \psi_{n+m}(\varphi(x,y))=\frac{2F_1(x,y)}{\Vert F(x,y)\Vert ^2}F(x,y)=\frac{2x_1y_1}{\Vert x\Vert^2\Vert y\Vert^2}F(x,y).
\end{equation*}
Since the maps $\psi_n\times \psi_m$ and $\psi_{n+m}$ are of mod two topological degree one, to conclude it suffices to show that $\varphi$ is of mod two topological degree one as well.

Denote by $a,b$ and $c$ the generators of the cohomology rings with coefficients in $\F_2$ of the projective spaces $\R P^n,\R P^m$ and $\R P^{n+m}$ respectively as $\F_2$-algebras. Thus, 
\begin{align*}
    H^\ast(\R P^n;\F_2)&=\F_2[a]/(a^{n+1}),\\
    H^\ast(\R P^m;\F_2)&=\F_2[b]/(b^{m+1}),\\ H^\ast(\R P^{n+m};\F_2)&=\F_2[c]/(c^{n+m+1}).
\end{align*}
Associating $a$ and $b$ with their pullbacks in $H^\ast(\R P^n\times \R P^m;\F_2)$ we also have
\begin{equation*}
    H^\ast(\R P^n\times \R P^m;\F_2)=\F_2[a,b]/(a^{n+1},b^{m+1}).
\end{equation*}
The following claim is classical; it goes back to Hopf \cite{hopfTopologischerBeitragZur1940}:
\begin{claim*}
We have
\begin{equation*}
    \varphi^\ast c=a+b.
\end{equation*}
\begin{proof}
For every $k$, denote by $\gamma_k$ the tautological line bundle over $\R P^k$. We begin by showing that the bundles $\pi_1^\ast\gamma_n\otimes \pi_2^\ast\gamma_m$ and $\varphi^\ast \gamma_{n+m}$ over $\R P^n \times \R P^m$ are isomorphic, where $\pi_1$ and $\pi_2$ are the projections from $\R P^n\times \R P^m$ onto the first and second factor respectively. The isomorphism is given in a natural manner; since $F$ is bilinear, for $(x,y)\in \R P^n \times \R P^m$ it induces a well defined map of fibres of the corresponding bundles
\begin{equation*}
    \R x\otimes \R y\rightarrow \R F(x,y).
\end{equation*}
The fact that this map is an isomorphism follows as $F$ is normed. Now, taking the first Stiefel-Whitney class we obtain the conclusion:
\begin{equation*}
    \varphi^\ast c=\varphi^\ast w_1(\gamma_{n+m})=w_1(\gamma_n\otimes \gamma_m)=a+b.
\end{equation*}
\end{proof}
\end{claim*}
From the claim and the assumption about the parity of $\binom{n+m}{n}$ we get
\begin{equation*}
    \varphi^\ast c^{n+m}=(a+b)^{n+m}=\sum_{i+j=n+m}\binom{n+m}{i}a^ib^j=\binom{n+m}{n}a^nb^m=a^nb^m.
\end{equation*}
Hence, the induced morphism
\begin{equation*}
    \varphi^\ast:H^{n+m}(\R P^{n+m};\F_2)\rightarrow H^{n+m}(\R P^n\times \R P^m;\F_2)
\end{equation*}
is an isomorphism. From the universal coefficient theorem it follows that the morphism
\begin{equation*}
    \varphi_\ast:H_{n+m}(\R P^n\times \R P^m;\F_2)\rightarrow H_{n+m}(\R P^{n+m};\F_2)
\end{equation*}
is an isomorphism as well, i.e. $\varphi$ is of mod two topological degree one, which was to be shown.
\end{proof}
\end{lem}
\begin{rem}
We do not know what the exact relation between the two conditions in the lemma is. Despite some effort, we were not even able to show that they are not equivalent. This seems rather unlikely though.
\end{rem}

The next step we need to take to obtain a positive result about the existence of regular maps from products of spheres into spheres is to prove an existence result about nice normed bilinear maps. Even though normed bilinear maps have been studied extensively throughout the last century, it seems like the additional niceness condition has not been considered in the literature. For this reason, we need to come up with a construction on our own. 

Recall the following definition from the introduction:
\begin{defi}
Let $n$ be a positive integer. The \emph{Radon-Hurwitz} number $\rho(n)$ is defined to be equal to $8a+2^b$, where $a\geq 0$ and $ 3\geq b\geq 0$ are such that
\begin{equation*}
    n=2^{4a+b}(2c+1)
\end{equation*}
for some $c\geq 0$.
\end{defi}
To state our result cleanly, we also introduce the following definition
\begin{defi}
A pair of nonnegative integers $(n,m)$ is said to be \emph{nice}, if it satisfies the assumptions of Lemma \ref{lem:bilinear+odd=>existsRegular}, i.e. if the following conditions are satisfied:
\begin{enumerate}
    \item there exists a nice normed bilinear map $F:\R^{n+1}\times \R^{m+1} \rightarrow \R^{n+m+1}$,
    \item the number $\binom{n+m}{n}$ is odd.
\end{enumerate}
\end{defi}
In particular, if $(n,m)$ is nice and the numbers are positive then every continuous map $\sph^{n}\times \sph^{m}\rightarrow \sph^{n+m}$ is homotopic to a regular one. Of course, the property of being nice is symmetric: if $(n,m)$ is nice then $(m,n)$ is nice as well.

A trivial example of a nice pair is $(0,0)$; the map $F$ can then be simply defined as $F(x,y)=xy$. More interesting examples of nice pairs can be constructed by repeatedly applying the following proposition:
\begin{prop}\label{prop:niceRecursion}
Let $(n,m)$ be a nice pair, and let $k$ be a natural number satisfying $\rho(k)>m$. Then, $(n+k,m)$ is a nice pair as well.
\begin{proof}
We begin by constructing a nice normed bilinear map
\begin{equation*}
    F:\R^{n+k+1}\times \R^{m+1}\rightarrow \R^{n+m+k+1}.
\end{equation*}
First of all, by assumption, there exists a nice normed bilinear map
\begin{equation*}
    G:\R^{n+1}\times \R^{m+1}\rightarrow \R^{n+m+1}.
\end{equation*}
Since $\rho(k)>m$, by the Hurwitz-Radon theorem (see \cite[Theorem 13.1.6]{bochnakRealAlgebraicGeometry1998}) there exists also a normed (not necessarily nice) bilinear map
\begin{equation*}
    H:\R^{k}\times \R^{m+1}\rightarrow \R^{k}.
\end{equation*}
For $x\in \R^{n+k+1}$ write $x=(\tilde x,\hat x)$, where $\tilde x\in \R^{n+1}$ and $\hat x \in  \R^{k}$. Finally define
\begin{align*}
     F&:\R^{n+k+1}\times \R^{m+1}\rightarrow \R^{n+k+m+1},\\
     F(x,y)&:=(G(\tilde x,y),H(\hat x,y)).
\end{align*}
Clearly, $F$ is a bilinear map. The fact that it is nice is obvious, as $G$ is nice by assumption. The fact that it is normed follows from the following computation:
\begin{equation*}
    \Vert F(x,y)\Vert^2=\Vert G(\tilde x, y)\Vert^2+\Vert H(\hat x, y)\Vert^2=\Vert \tilde x\Vert^2\Vert y\Vert^2+\Vert \hat x\Vert^2\Vert y\Vert^2=\Vert x\Vert^2\Vert y\Vert^2.
\end{equation*}

Let us now verify that the number $\binom{n+k+m}{n+k}$ is odd. The verification will rely on the following classical result
\begin{thm}[Kummer's theorem]
Let $a$ and $b$ be positive integers, expressed in binary as
\begin{align*}
    a=\varepsilon_0 +2\varepsilon_1 +\dots +2^{l}\varepsilon_{l},\\
    b=\varepsilon'_0 +2\varepsilon'_1 +\dots +2^l\varepsilon'_{l},
\end{align*}
with $\varepsilon_0,\dots,\varepsilon_l,\varepsilon_0',\dots,\varepsilon_l'\in\{0,1\}$ and $l\geq \log_2(a),\log_2(b)$. Then, the number $\binom{a+b}{a}$ is odd if and only if all the products $\varepsilon_i\varepsilon_i'$ vanish.
\end{thm}
Choose any $l\geq \log_2(m),\log_2(n+k)$ and express $m,n$ and $n+k$ in binary:
\begin{align*}
    m&=\varepsilon_0 +2\varepsilon_1 +\dots +2^{l}\varepsilon_{l},\\
    n&=\varepsilon'_0 +2\varepsilon'_1 +\dots +2^l\varepsilon'_{l}, \\
    n+k&=\varepsilon''_0 +2\varepsilon''_1 +\dots +2^l\varepsilon''_{l}.
\end{align*}
By assumption, the products $\varepsilon_i\varepsilon_i'$ vanish. 

It is easy to see from the definition of the Radon-Hurwitz number, that if we set $t$ to be equal to the $2$-adic exponent of $k$, then $m<\rho(k)\leq 2^t$. Hence, the numbers $\varepsilon_i$ are zero for $i\geq t$, so $\varepsilon_i\varepsilon_i''=0$ for $i\geq t$. On the other hand, since the difference between $n$ and $n+k$ is divisible by $2^t$ we have that $\varepsilon_i''=\varepsilon_i'$ for $i<t$. Hence $\varepsilon_i\varepsilon_i''=\varepsilon_i\varepsilon_i'=0$ for $i<t$ as well. The conclusion follows from another application of Kummer's theorem.
\end{proof}
\end{prop}

Loday's result (Theorem \ref{thm:Loday}) follows immediately from Proposition \ref{prop:niceRecursion}. Indeed, applying it once we get that $(0,m)$ is a nice pair for every $m>0$. Applying it once more we get that $(n,m)$ is a nice pair whenever $m<\rho(n)$.

Applying Proposition \ref{prop:niceRecursion} more than twice we obtain new results beyond Loday's construction. In particular, we can prove the following result from the introduction:
\begin{cor}
Let $n$ and $m$ be two positive integers. 
\begin{enumerate}
    \item Assume that $n\leq 8$ and that the number $\binom{n+m}{n}$ is odd. Then, the pair $(n,m)$ is nice and hence every continuous map from $\sph^n\times \sph^m$ into $\sph^{n+m}$ is homotopic to a regular one.
    \item Assume that $n\leq 3$. Then, the number $\binom{n+m}{n}$ is odd if and only if every continuous map from $\sph^n\times \sph^m$ into $\sph^{n+m}$ is homotopic to a regular one.
\end{enumerate}
\begin{proof}
Let us begin with the first part. Assume first that both $n$ and $m$ are strictly smaller than $8$, so that they can be written as
\begin{align*}
    n=\varepsilon_0+2\varepsilon_1+4\varepsilon_2, \\
    m=\varepsilon_0'+2\varepsilon_1'+4\varepsilon_2',
\end{align*}
where the coefficients are from $\{0,1\}$. By Kummer's theorem, we have $\varepsilon_i\varepsilon_i'=0$ for $i=0,1,2$. Since $\rho(1)=1$, applying Proposition \ref{prop:niceRecursion} we get that the pair $(\varepsilon_0,\varepsilon_0')$ is nice. Since $\rho(2)=2$ applying it once again we get that the pair $(\varepsilon_0+2\varepsilon_1,\varepsilon_0'+2\varepsilon_1')$ is nice. Since $\rho(4)=4$, applying it for the third time we get that $(n,m)$ is nice. 

Now assume that $n<8$ and $m$ is arbitrary. Then, $m$ can be written as $m=m'+m''$, where $m'<8$ and $m''$ is a multiple of $8$. By the previously covered case, we have that $(n,m')$ is nice. If $m''=0$ we are done, otherwise by the definition of $\rho$ we have $\rho(m'')\geq 8>n$, so from Proposition \ref{prop:niceRecursion} we conclude that $(n,m)$ is nice.

Finally, assume that $n=8$ and $m$ is arbitrary. Then, once more $m$ can be written as $m=m'+m''$ with $m'<8$ and $m''$ a multiple of $8$. Actually, due to Kummer's theorem the third binary digit of $m$ is zero, so $m''$ is divisible by $16$. Since $\rho(8)=8$, applying Proposition \ref{prop:niceRecursion} twice we get that the pair $(n,m')$ is nice. If $m''=0$ we are done, otherwise applying Proposition \ref{prop:niceRecursion} for the third time we find that $(n,m)$ is nice, since by definition of the Radon-Hurwitz number $\rho(m'')\geq \rho(16)=9>n$.

To obtain the second part of the conclusion it suffices to verify that if $n\leq 3$ and $\binom{n+m}{n}$ is even, then there is no regular map from $\sph^n\times \sph^m$ into $\sph^{n+m}$ of topological degree one. This follows from Kummer's theorem and Theorems \ref{thm:BKNoMap} and \ref{thm:23mod4noMap}.
\end{proof}
\end{cor}

As some more partial evidence for Conjecture \ref{conj:main}, we include the following result:
\begin{obs}
Let $n,m>0$ be such that $\binom{n+m}{n}$ is odd. Then, there exists a regular map
\begin{equation*}
    f:\R P^n\times \R P^m\rightarrow \sph^{n+m}
\end{equation*}
of mod two topological degree one.
\begin{proof}
As in the proof of Lemma \ref{lem:bilinear+odd=>existsRegular}, denote by $a,b$ and $c$ the cohomology classes, which generate the cohomology rings of $\R P^n,\R P^m$ and $\R P^{n+m}$ respectively. Thus as before we have
\begin{align*}
    H^\ast(\R P^n\times \R P^m;\F_2)&=\F_2[a,b]/(a^{n+1},b^{m+1}),\\
    H^\ast(\R P^{n+m};\F_2)&=\F_2[c]/(c^{n+m+1}).
\end{align*}
Since $\R P^{\infty}$ is an Eilenberg-MacLane space $K(\Z/2,1)$, there exists a map $g:\R P^n\times \R P^m\rightarrow \R P^{\infty}$, which satisfies $g^\ast w=a+b$, where $w$ is the generator of the cohomology ring of $\R P^\infty$. From the cellular approximation theorem we may assume that $g$ has its image contained in $\R P^{n+m}$. Thus, as a map into $\R P^{n+m}$ it satisfies $g^\ast c=a+b$. Now, according to \cite[Theorems 12.4.6 and 13.3.1]{bochnakRealAlgebraicGeometry1998}, for the map $g$ to be homotopic to a regular one it is necessary and sufficient that $g^\ast c\in H^1_\alg(\R P^n\times \R P^m;\F_2)$. This condition is satisfied (it follows for example from \cite[Theorem 12.4.6]{bochnakRealAlgebraicGeometry1998}), so we can assume that $g$ is a regular map. Now, as it was verified in the proof of Lemma \ref{lem:bilinear+odd=>existsRegular}, the condition $g^\ast c=a+b$ and the assumption that $\binom{n+m}{n}$ is odd force $g$ to be of mod two topological degree one. One can then compose $g$ with the blowup $\psi_{n+m}:\R P^{n+m}\rightarrow \sph^{n+m}$ to obtain the desired regular map $f:\R P^n\times \R P^m\rightarrow \sph^{n+m}$ of mod two topological degree one.
\end{proof}
\end{obs}

\section{Obstructions to algebraic approximation}\label{sec:obstructionsApproximation}
Throughout this section a smooth manifold is always considered without boundary, unless explicitly stated otherwise. 

Let $\var{X}$ be a nonsingular real affine variety. Let $M\subset \var{X}$ be a smooth compact submanifold of $\var{X}$. Recall from the introduction, that $M$ is said to \emph{admit an algebraic approximation} in $\var{X}$, if for every neighbourhood $\mathcal U$ of the inclusion $j:M\hookrightarrow \var{X}$ in the $\mathcal C^\infty$-topology there is a map $j':M\hookrightarrow \var{X}$ in $\mathcal U$ whose image $j'(M)$ is a nonsingular Zariski closed subset of $\var{X}$. Similarly, $M$ is said to \emph{admit a weak algebraic approximation} in $\var{X}$, if for every neighbourhood $\mathcal U$ of the inclusion $j:M\hookrightarrow \var{X}$ in the $\mathcal C^\infty$-topology there is a map $j':M\hookrightarrow \var{X}$ in $\mathcal U$ whose image $j'(M)$ is equal to the nonsingular locus of a Zariski closed subset of $\var{X}$.
\subsection{General obstructions to (weak) algebraic approximation}
We begin by stating some necessary conditions for the existence of a (weak) algebraic approximation, formulated in terms of the new invariants introduced in this paper. Later we will apply one of these conditions in the particular case of the complex-oriented cohomology theory $H_\C^\ast(-;\Z)$ to prove Theorem \ref{thm:newCounterex}. Nonetheless, we believe that the general form of the criteria in Observation \ref{obs:ObstructionsToApprox} and Proposition \ref{prop:generalObstructionWeak} might find use in future, in cases where the weaker invariants based on singular cohomology fail to obstruct algebraic approximation.

The obstruction to algebraic approximation is defined using the following construction:
\begin{defi}
Let $\var{X}$ be a nonsingular real affine variety, and let $M\subset \var{X}$ be its smooth compact submanifold of codimension $c$. Denote by $\nu$ the normal bundle to $M$ in $\var{X}$. Let 
\begin{equation*}
    \PT:\var{X}_+\rightarrow \Th(\nu)
\end{equation*}
be the Pontryagin-Thom collapse map onto the Thom space $\Th(\nu)$ of $\nu$, where $\var{X}_+:=\var{X}\sqcup\{\pt\}$ is the disjoint union of $\var{X}$ and a base point and $\PT$ is considered as a map between pointed spaces. Denote by $\nu_\C$ the complexification of $\nu$, that is, the complex bundle $\nu \otimes_\R \C$ over $M$. The natural inclusion $\nu \hookrightarrow \nu_\C$ gives an inclusion of Thom spaces 
\begin{equation*}
    \iota:\Th(\nu)\rightarrow \Th(\nu_\C).
\end{equation*}

Let $E^\ast$ be a complex-oriented generalised cohomology theory. As described in Section \ref{sec:preliminaries:cohomology}, the bundle $\nu_\C$ admits the natural Thom class $u_{\nu_\C}^E\in E^{2c}(\nu_\C,\nu_\C\backslash M)\cong \widetilde{E}^{2c}(\Th(\nu_\C))$ in degree $2c$. We denote its image through the following composition by $\gamma_M^E\in E^{2c}(\var{X})$:
\begin{equation*}
    \widetilde E^{2c}(\Th(\nu_\C))\xrightarrow{\iota^\ast}\widetilde E^{2c}(\Th(\nu))\xrightarrow{\PT^\ast} \widetilde E^{2c}(\var{X}_+)\cong E^{2c}(\var{X}). 
\end{equation*}

Now, let $F^\star$ be a Real-oriented equivariant cohomology theory. If we treat $M$ as a Real space with the trivial involution, we can also treat $\nu_\C$ as a Real vector bundle, with the involution induced by complex conjugation on $\C$. This way, $\nu$ is the fixed point subbundle of $\nu_\C$, and $\iota$ is a map of Real spaces if $\Th(\nu)$ is considered with the trivial involution. The Real bundle $\nu_\C$ carries the natural Thom class $u_{\nu_\C}^F\in \widetilde F^{c+c\tau}(\Th(\nu_\C))$. Similarly, we denote its image through the following composition by $\gamma_M^F\in F^{c+c\tau}(\var{X})$:
\begin{equation*}
    \widetilde F^{c+c\tau}(\Th(\nu_\C))\xrightarrow{\iota^\ast}\widetilde F^{c+c\tau}(\Th(\nu))\xrightarrow{\PT^\ast} \widetilde{F}^{c+c\tau}(\var{X}_+)\cong F^{c+c\tau}(\var{X}). 
\end{equation*}
\end{defi}

These classes can obstruct algebraic approximation of $M$ due to the following observation:
\begin{obs}\label{obs:ObstructionsToApprox}
Let $F^\star$ be a Real-oriented equivariant cohomology theory. Let $\var{X}$ be a complete nonsingular real affine variety, and let $M\subset \var{X}$ be a smooth submanifold of $\var{X}$ of codimension $c$. If $M$ admits an algebraic approximation in $\var{X}$, then $\gamma_M^F\in F^{c+c\tau}_\C(\var{X})$.

If we start with a complex-oriented cohomology theory $E^\ast$ instead, then similarly $\gamma_M^E\in E^{2c}_\C(\var{X})$.
\begin{proof}
We consider only the case of a Real-oriented cohomology theory, as the latter case is similar.

Let $j':M\rightarrow \var{X}$ be a smooth embedding, such that the set $\var{Y}:=j'(M)$ is a nonsingular Zariski closed subset of $\var{X}$. Let $\nu'$ be the normal bundle to $\var{Y}$ in $\var{X}$. If $j'$ is sufficiently close to $j$ in the $\mathcal C^\infty$-topology, then the bundles $\nu$ and $j'^\ast \nu'$ are isomorphic, and there is an isomorphism between the corresponding Thom spaces making the diagram below commutative up to homotopy:
\begin{center}
\begin{tikzcd}
\var{X}_+ \arrow[r] \arrow[rd] & \Th(\nu) \arrow[d] \\
                          & \Th(\nu')         
\end{tikzcd}
\end{center}

It follows that the classes $\gamma_M^F$ and $\gamma_{\var{Y}}^F$ are equal. Therefore, without loss of generality we may assume that $M=\var{Y}$ is a nonsingular Zariski closed subset of $\var{X}$.

After resolving singularities, we find a nonsingular projective complexification $\Rvar{X}$ of $\var{X}$, such that the Zariski closure of $\var{Y}$ in $\Rvar{X}$ is a Zariski closed nonsingular subset $\Rvar{Y}$ of $\Rvar{X}$, i.e. $\Rvar{Y}$ is a nonsingular projective complexification of $\var{Y}$. The normal bundle $\xi$ to $\Rvar{Y}$ in $\Rvar{X}$ has a natural structure of a Real bundle. In the usual way, we get an equivariant Pontryagin-Thom collapse map onto the Thom space of this bundle:
\begin{equation*}
    \PT_\xi:\Rvar{X}_+\rightarrow \Th(\xi).
\end{equation*}
We have $\xi\vert_{\var{Y}}\cong\nu_\C$ as Real vector bundles. The bundle $\xi$ carries a canonical Thom class $u_\xi^F\in \widetilde F^{c+c\tau}(\Th(\xi))$, whose restriction to $\Th(\nu_\C)$ by naturality is equal to its canonical Thom class $u^F_{\nu_\C}$. We have the commutative diagram
\begin{center}
\begin{tikzcd}
\widetilde F^{c+c\tau}(\Rvar{X}_+) \arrow[d, "i^\ast"] & \widetilde F^{c+c\tau}(\Th(\xi)) \arrow[l, "\PT_\xi^\ast"'] \arrow[d] \\
\widetilde F^{c+c\tau}(\var{X}_+)                  & \widetilde F^{c+c\tau}(\Th(\nu_\C)) \arrow[l, "(\iota\circ \PT)^\ast"']
\end{tikzcd}
\end{center}
It follows that $\gamma^F_M=i^\ast( \PT_{\xi}^\ast u^F_{\xi})$, where $i:\var{X}\hookrightarrow \Rvar{X}$ is the inclusion. Hence, by definition $\gamma^F_M\in F^{c+c\tau}_\C(\var{X})$.
\end{proof}
\end{obs}

In this setting we introduce another cohomology class, which will turn out to be an obstruction to weak algebraic approximation:
\begin{defi}
Let $\var{X}$ be a nonsingular real affine variety, let $M\subset \var{X}$ be its smooth compact submanifold of codimension $c$ and let $E^\ast$ be a complex-oriented cohomology theory. We denote by $\delta^E_M\in E^{2c}(M)$ the pullback of $\gamma^E_M\in E^{2c}(\var{X})$ through the inclusion $j:M\hookrightarrow \var{X}$. 

Similarly, if $F^\star$ is a Real-oriented equivariant cohomology theory, we denote the pullback of $\gamma^F_M\in F^{c+c\tau}(\var{X})$ through $j$ by $\delta^F_M\in F^{c+c\tau}(M)$.
\end{defi}
Even though this is not important for our applications, we note that these classes can be defined in a different way:
\begin{obs}
Let $\var{X}$ be a nonsingular real affine variety, let $M\subset \var{X}$ be its smooth compact submanifold of codimension $c$ and let $E^\ast$ be a complex-oriented cohomology theory. Then $\delta^E_M$ is equal to the $c$-th Conner-Floyd Chern class $c_{c}^E(\nu_\C)$ of the bundle $\nu_\C$ (see for example \cite[Section I.4]{adamsStableHomotopyGeneralised1974} for the definition of Conner-Floyd Chern classes in $E^\ast$). 

Similarly, if $F^\star$ is a Real-oriented cohomology theory, then $\delta^F_M=c_{c}^F(\nu_\C)\in F^{c+c\tau}(M)$, where $c_{c}$ is the $c$-th Conner-Floyd Chern class in $F^\star$ (see \cite[(4.5)]{arakiOrientationsTcohomologyTheories1979} for its definition).
\begin{proof}
We consider only the Real-oriented case. By construction $\delta^F_M$ is the pullback of the Thom class of the bundle $\nu_\C$ over $M$, i.e. its Euler class. According to \cite[Proposition 5.2]{arakiOrientationsTcohomologyTheories1979}, just like in the nonequivariant case, the Euler class is equal to $c_c^F(\nu_\C)$.
\end{proof}
\end{obs}

The classes $\delta^E_M$ and $\delta^F_M$ can obstruct weak algebraic approximation due to the following:
\begin{prop}\label{prop:generalObstructionWeak}
Let $E^\ast$ be a complex-oriented cohomology theory. Let $\var{X}$ be a complete nonsingular real affine variety, and let $M\subset \var{X}$ be a smooth submanifold of $\var{X}$ of codimension $c$. If $M$ admits a weak algebraic approximation in $\var{X}$, then there exists a class $\beta \in E^{2c}_\C(\var{X})$, whose restriction $j^\ast \beta$ to $M$ is equal to $\delta^E_M$.

Similarly, let $F^\star$ be a Real-oriented equivariant cohomology theory. If $M$ admits a weak algebraic approximation in $\var{X}$, then there exists a class $\beta \in F^{c+c\tau}_\C(\var{X})$, whose restriction to $M$ is equal to $\delta^F_M$.
\begin{proof}
We consider only the Real-oriented case. By a similar argument as in the proof of Observation \ref{obs:ObstructionsToApprox}, we may assume that $M$ is equal to the nonsingular locus of a Zariski closed subset $\var{Y}$ of $\var{X}$. We can then assume that $\var{Y}$ is in fact equal to the Zariski closure of $M$ in $\var{X}$, and hence that each irreducible component of $\var{Y}$ is of codimension $c$ in $\var{X}$.

Fix a nonsingular projective complexification $\Rvar{X}$ of $\var{X}$ and denote by $\Rvar{Y}$ the Zariski closure of $\var{Y}$ in $\Rvar{X}$. Applying the resolution of singularities theorem, we find a nonsingular projective $\R$-variety $\widetilde{\Rvar{Y}}$ and a regular map 
\begin{equation*}
    \varphi:\widetilde{\Rvar{Y}}\rightarrow \Rvar{Y}
\end{equation*}
which induces a birational equivalence between the irreducible components of $\widetilde{\Rvar{Y}}$ and the respective irreducible components of $\Rvar{Y}$. Moreover, since $\Rvar{Y}$ is nonsingular at points of $M$, we can assume that there is a Zariski open neighbourhood $M\subset U\subset \Rvar{Y}$ of $M$ in $\Rvar{Y}$ such that $\varphi\vert_{\varphi^{-1}(U)}$ is a biregular isomorphism onto $U$. 

The $\R$-variety $\widetilde{\Rvar{Y}}$ by assumption can be embedded as a nonsingular Zariski closed subset of $\C P^n$ for some $n$. Identifying $\C P^n$ with the space 
\begin{equation*}
    \{P\in \mathcal M_{(n+1)\times (n+1)}:P^\ast=P,P^2=P,\tr P=1\}
\end{equation*}
of hermitian idempotent matrices of trace one, we get an equivariant embedding of $\widetilde{\Rvar{Y}}$ as a Real submanifold of $\R^{m+m\tau}$, where $m=(n+1)^2$. Denote this embedding by $\eta:\widetilde{\Rvar{Y}}\hookrightarrow \R^{m+m\tau}$. 

Let $\theta:\widetilde{\Rvar{Y}}\rightarrow \R$ be a smooth map constantly equal to one on a Euclidean neighbourhood of $\varphi^{-1}(M)$ and satisfying $\supp \theta\subset \varphi^{-1}(U)$. After substituting $\frac{1}{2}(\theta+\theta\circ \sigma)$ for $\theta$, where $\sigma$ is the conjugation on $\widetilde{\Rvar{Y}}$, we may assume that $\theta$ satisfies $\theta\circ \sigma =\theta$. Consider the map 
\begin{equation*}
    \psi:\widetilde{\Rvar{Y}}\hookrightarrow \Rvar{X}\times \R^{m+m\tau},\quad \psi(y):=(\varphi(y),(1-\theta(y))\eta(y)).
\end{equation*}
We claim that it is an equivariant embedding of the smooth manifold $\widetilde{\Rvar{Y}}$ into $\Rvar{X}\times \R^{m+m\tau}$. Indeed, it is clearly immersive, and it is one-to-one since $\varphi\vert_{\varphi^{-1}(U)}$ is a bijection onto $U$ and $(1-\theta)\eta$ is injective on the complement of $\varphi^{-1}(U)$. Moreover, by construction there is a Euclidean neighbourhood $V$ of $M$ in $\Rvar{X}$, such that 
\begin{equation}\label{weakObstructionGeneral:eq1}
    \psi(\widetilde{\Rvar{Y}})\cap (V\times \R^{m+m\tau})=(\Rvar{Y} \cap V)\times \{0\}.
\end{equation}

Denote the normal bundle to $\psi(\widetilde{\Rvar{Y}})$ in $\Rvar{X}\times \R^{m+m\tau}$ by $\xi$. Consider the equivariant Pontryagin-Thom collapse map from the one-point compactification $\Sigma^{m+m\tau}\Rvar{X}_+$ of $\Rvar{X}\times \R^{m+m\tau}$ onto the Thom space of $\xi$:
\begin{equation*}
    \PT_\xi:\Sigma^{m+m\tau}\Rvar{X}_+\rightarrow \Th(\xi).
\end{equation*}
Let $u_\xi^F\in \widetilde{F}^{(c+m)(1+\tau)}(\Th(\xi))$ be the Thom class of $\xi$. Consider the following commutative diagram, where the vertical arrows are suspension isomorphisms:
\begin{center}
\begin{tikzcd}
\widetilde{F}^{(c+m)(1+\tau)}(\Sigma^{m+m\tau}M_+) \arrow[d, "\cong"] & \widetilde{F}^{(c+m)(1+\tau)}(\Sigma^{m+m\tau}\Rvar{X}_+) \arrow[l] \arrow[d, "\cong"] & \widetilde{F}^{(c+m)(1+\tau)}(\Th(\xi)) \arrow[l, "\PT^\ast_\xi"'] \\
\widetilde{F}^{c+c\tau}(M_+)                                          & \widetilde{F}^{c+c\tau}(\Rvar{X}_+) \arrow[l]                                         &                                                              
\end{tikzcd}
\end{center}
We claim that the image of $u_\xi^F$ in $\widetilde{F}^{c+c\tau}(M_+)=F^{c+c\tau}(M)$ is equal to $\delta^F_M$. This implies the conclusion, as the bottom arrow in the diagram can be factored as
\begin{equation*}
    F^{c+c\tau}(\Rvar{X})\xrightarrow{i^\ast} F^{c+c\tau}(\var{X}) \xrightarrow{j^\ast} F^{c+c\tau}(M),
\end{equation*}
and thus the required class $\beta$ may be chosen in the image of $F^{c+c\tau}(\Rvar{X})\to F^{c+c\tau}(\var{X})$, that is, in $F^{c+c\tau}_\C(\var{X})$.

To verify the claim, note that thanks to \eqref{weakObstructionGeneral:eq1} the restriction of the bundle $\xi$ to $M\subset \psi(\widetilde{\Rvar{Y}})$ is isomorphic to $\nu_\C \oplus \R^{m+m\tau}$, where the second summand denotes the trivial bundle over $M$ with fibre $\R^{m+m\tau}$. It follows that $\Th(\xi\vert_{M})$ can be identified with $\Th(\nu_\C\oplus \R^{m+m\tau})\cong \Sigma^{m+m\tau}\Th(\nu_\C)$, and that we have the commutative diagram
\begin{center}
\begin{tikzcd}
\widetilde F^{(c+m)(1+\tau)}(\Sigma^{m+m\tau}M_+) \arrow[d,"\cong"] & \widetilde F^{(c+m)(1+\tau)}(\Th(\xi)) \arrow[d,"\cong"] \arrow[l] \\
\widetilde F^{c+c\tau}(M_+)                   & \widetilde F^{c+c\tau}(\Th(\nu_\C)) \arrow[l]             
\end{tikzcd}
\end{center}
It follows from multiplicativity of orientation classes in a Real-oriented equivariant cohomology theory (see \cite[p. 412]{arakiOrientationsTcohomologyTheories1979}) that the orientation class $u^F_\xi$ maps to the orientation class $u^F_{\nu_\C}$ under the suspension isomorphism on the right. Hence it maps to the class $\delta^F_M\in F^{c+c\tau}(M)\cong \widetilde F^{c+c\tau}(M_+)$ by definition.
\end{proof}
\end{prop}

\subsection{Obstructions based on singular cohomology}
We will now restrict ourselves to the very special case of the complex-oriented cohomology theory $H^\ast(-;\Z)$. Throughout this section singular cohomology will always be considered with coefficients in $\Z$, unless written otherwise. Moreover, from now on, by $\gamma_M$ and $\delta_M$ we denote the classes $\gamma_M^E$ and $\delta^E_M$ respectively, corresponding to singular cohomology $E^\ast=H^\ast$. These classes can be described in an easier way under an additional orientability assumption:
\begin{lem}\label{lem:orientability=>Square}
Let $\var{X}$ be a complete nonsingular real affine variety, and let $M\subset \var{X}$ be a smooth compact submanifold of $\var{X}$ of codimension $c$. Assume that we are given orientations of the manifolds $M$ and $\var{X}$, so that through Poincar\'e duality $M$ represents a cohomology class $\alpha_M \in H^c(\var{X})$. Then, $\gamma_M$ is up to sign equal to $\alpha_M^2$.
\begin{proof}
The orientations of $\var{X}$ and $M$ induce an orientation of the normal bundle $\nu$ to $M$ in $\var{X}$. This gives a Thom class for this bundle, which we denote by
\begin{equation*}
    u_\nu \in \widetilde{H}^c(\Th(\nu))\cong H^c(\nu,\nu\backslash M).
\end{equation*}
The pullback of $u_\nu$ to $H^c(\var{X})$ under the Pontryagin-Thom collapse map
\begin{equation*}
    \PT:X_+\rightarrow \Th(\nu)
\end{equation*}
is up to sign equal to $\alpha_M$ \cite[Problem 11-C]{milnorCharacteristicClasses1974}. The bundle $\nu_\C$ splits as a direct sum $\nu_\C=\nu \oplus i\nu$. Denote by $u_{i\nu}$ the image of $u_\nu$ in $H^c(i\nu,i\nu\backslash M)$ under the isomorphism induced by $\nu\cong i\nu$. Notice that the pair $(u_\nu,u_{i\nu})$ maps to a Thom class of the bundle $\nu_\C$ through the cross product
\begin{equation*}
    H^c(\nu,\nu\backslash M)\times H^{c}(i\nu,i\nu\backslash M)\rightarrow H^{2c}((\nu,\nu\backslash M) \times (i\nu,i\nu\backslash M))\rightarrow H^{2c}(\nu_\C,\nu_\C\backslash M).
\end{equation*}
To be more explicit, let us fix a point $x\in M$ and a positive basis $e_1,\dots,e_c$ of the fibre of $\nu$ over $x$. Then, the restriction of $u_\nu \times u_{i\nu}$ to the fibre over $x$ is a fundamental cohomology class, which corresponds to the orientation $e_1,\dots,e_c,ie_1,\dots,ie_c$ of the fibre (cf. \cite[Proof of Property 9.6]{milnorCharacteristicClasses1974}). This differs from the standard orientation $e_1,ie_1,\dots,e_c,ie_c$ by the sign $(-1)^{\binom{c}{2}}$. Thus $u_\nu\times u_{i\nu}$ is a Thom class of the bundle $\nu_\C$ and it satisfies, 
\begin{equation*}
    u_{\nu_\C}=(-1)^{\binom{c}{2}}u_\nu \times u_{i\nu}.
\end{equation*}

Now, we have the commutative diagram:
\begin{center}
\begin{tikzcd}
{H^c(\nu,\nu\backslash M)\times H^c(i\nu,i\nu\backslash M)} \arrow[d] \arrow[r] & {H^{2c}(\nu_\C,\nu_\C\backslash M)} \arrow[d] \arrow[dd, "\iota^\ast", bend left=80] \\
{H^c(\nu,\nu\backslash M)\times H^c(i\nu)} \arrow[r] \arrow[d]                  & {H^{2c}(\nu_\C,\nu_\C\backslash i\nu)} \arrow[d]                       \\
{H^c(\nu,\nu\backslash M)\times H^c(M)} \arrow[r]                               & H^{2c}(\nu,\nu\backslash M)                                                                         
\end{tikzcd}
\end{center}
It follows that $\iota^\ast u_{\nu_\C}$ is up to sign equal to $u_\nu\smile p^\ast(e_{i\nu})$, where $e_{i\nu}$ is the Euler class of $i\nu$ and $p:\nu\rightarrow M$ is the bundle projection. The class $e_{i\nu}$ is equal to the Euler class $e_\nu$ of $\nu$, as the two bundles are isomorphic. From commutativity of the diagram
\begin{center}
\begin{tikzcd}
{H^c(\nu,\nu\backslash M)\times H^c(\nu,\nu\backslash M)} \arrow[d] \arrow[rd] &                               \\
{H^c(\nu,\nu\backslash M)\times H^c(M)} \arrow[r]                              & {H^{2c}(\nu,\nu\backslash M)}
\end{tikzcd}
\end{center}
we then deduce that $\iota^\ast u_{\nu_\C}$ up to sign equals $u_{\nu}^2$. Pulling back to $\var{X}$ we conclude that $\gamma_M$ up to sign equals $\alpha_M^2$, which was to be shown.
\end{proof}
\end{lem}

To summarize, from Observation \ref{obs:ObstructionsToApprox} and Lemma \ref{lem:orientability=>Square} we deduce the following
\begin{cor}\label{cor:squareNoApproximation}
Let $\var{X}$ be a complete nonsingular real affine variety, and let $M\subset \var{X}$ be a smooth compact submanifold of $\var{X}$ of codimension $c$. Assume that we are given orientations of the manifolds $M$ and $\var{X}$, so that $M$ represents a cohomology class $\alpha_M \in H^c(\var{X})$. If $M$ admits an algebraic approximation in $\var{X}$, then $\alpha_M^2\in H^{2c}_{\C}(\var{X})$.
\end{cor}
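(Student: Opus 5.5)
This is a direct combination of Observation \ref{obs:ObstructionsToApprox} and Lemma \ref{lem:orientability=>Square}, specialised to the complex-oriented theory $E^\ast=H^\ast(-;\Z)$.

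First, assume $M$ admits an algebraic approximation in $\var{X}$. Applying Observation \ref{obs:ObstructionsToApprox} with $E^\ast=H^\ast(-;\Z)$, whose complex orientation is the usual first Chern class, gives $\gamma_M\in H^{2c}_\C(\var{X})$.

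Second, the orientations of $M$ and $\var{X}$ are fixed, so Lemma \ref{lem:orientability=>Square} applies and gives $\gamma_M=\pm\alpha_M^2$. This uses compactness of $M$, which holds by hypothesis, and completeness of $\var{X}$, needed for Poincaré duality to define $\alpha_M$.

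Third, $H^{2c}_\C(\var{X})$ is the image of the group homomorphism $i^\ast\colon H^{2c}(\Rvar{X}(\C))\to H^{2c}(\var{X})$, so it is a subgroup and closed under negation. Hence $\alpha_M^2=\pm\gamma_M\in H^{2c}_\C(\var{X})$.

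The only point needing any care is this sign ambiguity, and closure under negation disposes of it. I expect no real obstacle: the substantive work was already done in the observation and the lemma.
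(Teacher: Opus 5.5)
Your proposal is correct and is the paper's argument: the corollary is obtained by combining Observation \ref{obs:ObstructionsToApprox} for $E^\ast=H^\ast(-;\Z)$ with Lemma \ref{lem:orientability=>Square}. The sign ambiguity is disposed of, as you say, because $H^{2c}_\C(\var{X})$ is the image of a homomorphism and hence a subgroup.
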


Using this, one can already construct interesting examples of submanifolds of real affine varieties which do not admit algebraic approximations, showing that the assumption about the dimension in Theorem \ref{thm:BenoistApprox} is essential. To obstruct weak algebraic approximation we make use of the following version of Proposition \ref{prop:generalObstructionWeak}:
\begin{prop}\label{prop:cubeNoWeakApprox}
Let $\var{X}$ be a complete nonsingular real affine variety, and let $M\subset \var{X}$ be a smooth compact submanifold of $\var{X}$ of codimension $c$. Assume that we are given orientations of the manifolds $M$ and $\var{X}$, so that $M$ represents a cohomology class $\alpha_M \in H^c(\var{X})$. If $M$ admits a weak algebraic approximation in $\var{X}$, then there exists a class $\beta\in  H^{2c}_{\C}(\var{X})$, such that
\begin{equation}\label{cubeCondition}
    \beta \alpha_M =\alpha_M^3.
\end{equation}
\begin{proof}
Denote by $j:M\hookrightarrow \var{X}$ the inclusion. From Proposition \ref{prop:generalObstructionWeak} and Lemma \ref{lem:orientability=>Square} we deduce that there exists a class $\beta\in  H^{2c}_{\C}(\var{X})$, which satisfies
\begin{equation*}
    j^\ast \beta = j^\ast \alpha_M^2.
\end{equation*}
Capping both sides of the equation with the fundamental class of $M$ and applying the pushforward $j_{\ast}$ we get
\begin{equation*}
    \beta \frown j_\ast [M]=j_\ast(j^\ast\beta \frown [M])=j_\ast(j^\ast \alpha_M^2\frown [M])=\alpha_M^2\frown j_\ast[M].
\end{equation*}
By definition we have $\alpha_M\frown [\var{X}]=[M]$, so
\begin{equation*}
    \beta \alpha_M \frown [\var{X}]=\alpha_M^3 \frown [\var{X}].
\end{equation*}
The conclusion follows as the Poincar\'e duality map is an isomorphism.
\end{proof}
\end{prop}

We can now begin the construction of the example discussed in Theorem \ref{thm:newCounterex}. It will be based on the following lemma:
\begin{lem}\label{lem:constructionDouble}
Let $N$ be a smooth connected compact manifold of positive dimension. Then, there exists a nonsingular real affine variety $\var{X}$ diffeomorphic to the disjoint union of two copies of $N$ through a diffeomorphism $\psi:\var{X}\rightarrow N\sqcup N$, with the property that for every $0\leq k\leq \dim N-1$, $H^{k}_{\C}(\var{X})$ is contained in the image of the homomorphism
\begin{equation*}
    H^{k}(N)\xrightarrow{p^\ast} H^{k}(N\sqcup N)\xrightarrow{\psi^\ast} H^{k}(\var{X}),
\end{equation*}
where the first arrow is induced by the projection $p:N\sqcup N\cong N\times \{-1,1\}\rightarrow N$.
\begin{proof}
According to the Nash-Tognoli theorem, there exists a nonsingular real affine variety $\var{Y}$ diffeomorphic to $N$. Consider the variety $\var{Z}:=\var{Y}\times \sph^1$. The set $\var{Y}\times \{-1,1\}$ is a smooth codimension one submanifold of $\var{Z}$ homologous to zero. This allows us to apply \cite[Proposition 2.7]{KucharzKurdykaComplexification2011} to find an embedding $j':\var{Y}\times \{-1,1\} \rightarrow \var{Z}$ arbitrarily close to the inclusion $j:\var{Y}\times \{-1,1\} \hookrightarrow \var{Z}$ in the $\mathcal C^\infty$-topology, such that the set $\var{X}:=j'(\var{Y}\times \{-1,1\})$ is a nonsingular Zariski closed subset of $\var{Z}$, and such that 
\begin{equation}\label{constrEq1}
    H^{k}_{\C}(\var{X})= \kappa^\ast H^{k}_\C(\var{Z})\subset \kappa^\ast H^k(\var{Z})\text{ for }0\leq k\leq \dim N-1,
\end{equation}
where 
\begin{equation*}
    \kappa:\var{X}\hookrightarrow \var{Z}
\end{equation*}
is the inclusion map. We can assume that $j'$ is so close to $j$ that the two maps are homotopic. Denote by $\psi:\var{X}\rightarrow N\sqcup N$ the inverse of $j'$, where we identify $\var{Y}\times \{-1,1\}$ with $N\sqcup N$. It follows that
\begin{equation}\label{constrEq2}
    \kappa^\ast H^{\ast}(\var{Z})=(j\circ \psi)^\ast H^{\ast}(\var{Z}).
\end{equation}
Moreover, the inclusion $j:\var{Y}\times \{-1,1\}\hookrightarrow \var{Z}$ up to homotopy factors through the projection $p:\var{Y}\times \{-1,1\}\rightarrow \var{Y}$, so
\begin{equation*}
    j^\ast H^\ast(\var{Z})\subset p^\ast H^\ast(\var{Y}).
\end{equation*}
Combining this with \eqref{constrEq1} and \eqref{constrEq2} yields the conclusion.
\end{proof}
\end{lem}

\begin{proof}[Proof of Theorem \ref{thm:newCounterex}]
Consider the smooth manifold $\C P^3$. Apply Lemma \ref{lem:constructionDouble} to find a nonsingular real affine variety $\var{X}$ diffeomorphic to the disjoint union $\C P^3\sqcup \C P^3$, such that $H_{\C}^4(\var{X})$ is contained in the image of the map
\begin{equation*}
    H^4(\C P^3)\xrightarrow{p^\ast} H^4(\var{X}), 
\end{equation*}
where we identify $\var{X}$ with $\C P^3\sqcup \C P^3$ and $p$ is the projection $p:\C P^3\times \{-1,1\}\rightarrow \C P^3$. The cohomology ring of $\var{X}$ is the cartesian product of two copies of $H^\ast(\C P^3)$, so it may be written as 
\begin{equation*}
    H^\ast(\var{X})=\Z[u_1]/(u_1^4)\times \Z[u_2]/(u_2^4), 
\end{equation*}
where $|u_1|=|u_2|=2$. With this notation we have $H^4_{\C}(\var{X})\subset p^\ast H^4(\C P^3)=\Z (u_1^2+ u_2^2)$.

Let $M_1$ be a smooth complex degree two hypersurface in the first copy of $\C P^3$, and let $M_2$ be a smooth complex degree four hypersurface in the second copy of $\C P^3$, both treated as smooth manifolds. Define $M:=M_1\sqcup M_2\subset \var{X}$. We claim that the pair $(\var{X},M)$ satisfies the desired conditions.

Let us first verify that $M$ does not admit a weak algebraic approximation in $\var{X}$. The cohomology class $\alpha_M$ represented by $M$ is equal to $2u_1+4u_2$. Assume that $M$ does admit a weak algebraic approximation in $\var{X}$. Then, according to Proposition \ref{prop:cubeNoWeakApprox}, there exists an element $\beta\in H^4_{\C}(\var{X})$ such that
\begin{equation*}
    \beta\alpha_M=\alpha_M^3.
\end{equation*}
Since $\beta \in \Z (u_1^2+ u_2^2)$, it can be written as $n(u_1^2+u_2^2)$ for some $n\in \Z$. We then have
\begin{equation*}
    2nu_1^3+4nu_2^3=8u_1^3+64u_2^3.
\end{equation*}
It follows that $4=n=16$, which is a contradiction.

Let us now verify that the bordism class of the inclusion $[j:M\hookrightarrow \var{X}]$ is zero. It suffices to show that the bordism classes of the inclusions of the two components $[j_1:M_1\hookrightarrow \C P^3]$ and $[j_2:M_2\hookrightarrow \C P^3]$ are zero. We will show this only for the first one, as the second case is completely analogous.

According to \cite[Theorem 17.2]{connerDifferentiablePeriodicMaps1979}, the bordism class $[j_1:M_1\hookrightarrow \C P^3]$ being zero is equivalent to the equation
\begin{equation}\label{ex:conditionVanishingSW}
    (j_1^\ast a \smile w_I(TM_1))\frown[M_1]_{\Z/2}=0
\end{equation}
being true for all $a$ and $w_I$, where $w_I(TM_1)$ is a monomial in Stiefel-Whitney classes of the tangent bundle $w_I(TM_1)=(w_1(TM_1))^{i_1}\dots (w_{4}(TM_1))^{i_{4}}$, $a  \in H^\ast(\C P^{3};\Z/2)$ is a cohomology class such that $|a|+|w_I(TM_1)|=4$ and $[M_1]_{\Z/2}$ is the fundamental class of $M_1$ with $\Z/2$-coefficients.

The variety $M_1$ can be described as the zero locus of some section of the complex line bundle $\mathcal O(2)$ over $\C P^{3}$ transverse to zero. Hence, the normal bundle $\nu$ to $M_1$ in $\C P^{3}$ is isomorphic to $\mathcal O(2)\vert_{M_1}$. As $\mathcal O(2)$ is a complex line bundle whose first Chern class is even, it follows that its Stiefel-Whitney classes vanish (cf. \cite[Problem 14-B]{milnorCharacteristicClasses1974}). From the Whitney sum formula it follows that 
\begin{equation*}
    w_i(TM_1)=w_i(j_1^\ast T(\C P^{3}))=j_1^\ast w_i(T(\C P^{3}))    
\end{equation*}
for all $i$. In particular, all the Stiefel-Whitney classes of the bundle $TM_1$ are pullbacks of classes from $H^\ast(\C P^{3};\Z/2)$. The equation \eqref{ex:conditionVanishingSW} now reduces to the following:
\begin{equation}\label{ex:conditionVanishingSW2}
    j_1^\ast a \frown [M_1]_{\Z/2}=0\text{ for all }a\in H^{4}(\C P^{3};\Z/2).
\end{equation}
Using the projection formula, we find that
\begin{equation*}
    j_{1\ast} (j_1^\ast a \frown [M_1]_{\Z/2})=a \frown j_{1\ast}[M_1]_{\Z/2}=a\frown 0=0, 
\end{equation*}
since $M_1$ is a hypersurface of even degree. As the complex hypersurface $M_1$ is connected, the morphism on zeroth homology groups induced by $j_1$ is injective. This proves \eqref{ex:conditionVanishingSW2} and finishes the proof.
\end{proof}
\begin{rem}
In the example just constructed the smooth manifold $M$ has two connected components. A variant of the construction, where one considers the connected sum of complex projective spaces instead of their disjoint sum, should also yield examples where both $\var{X}$ and $M$ are connected.
\end{rem}
\section*{Acknowledgments}
The author was partially supported by the Polish National Science Center (NCN) under grant number 2024/53/N/ST1/02481.
\printbibliography
\end{document}